\def\N{{\mathbb N}}
\def\R{{\mathbb R}}
\def\P{{\mathbb P}}
\def\E{{\mathbb E}}
\def\eps{\varepsilon}
\def\cal{\mathcal}
\newcommand{\diff}{\mathop{}\mathopen{}\mathrm{d}}
\newcommand{\card}{\mathop{}\mathopen{}\mathrm{card}}
\newcommand\ind[1]{\mathbbm{1}_{\left\{#1\right\}}}
\newcommand\croc[1]{\left\langle #1\right\rangle}
\newcommand\steq[1]{\stackrel{\text{\rm #1.}}{=}}
\newtheorem{theorem}{Theorem}[section]
\newtheorem{proposition}[theorem]{Proposition}
\newtheorem{corollary}[theorem]{Corollary}
\theoremstyle{definition}
\newtheorem{definition}[theorem]{Definition}
\theoremstyle{remark}
\newtheorem{remark}{Remark}
\title[The Equilibrium States of Large Networks]{The Equilibrium States\\ of Large Networks of Erlang Queues}
\date{\today}
\address[D. Martirosyan, Ph. Robert]{INRIA Paris, 2 rue Simone Iff, F-75012 Paris, France}
\author{Davit Martirosyan}\thanks{This work has been supported by the Celtic Plus project SENDATE TANDEM (C2015/3-2)}
\email{Martirosyan.Davit@gmail.com}
\author{Philippe  Robert}
\email{Philippe.Robert@inria.fr}
\urladdr{http://team.inria.fr/rap/robert}
\begin{document}

\begin{abstract}
The equilibrium properties of allocation algorithms for networks  with a large number of nodes with finite capacity are investigated.  Every node  is receiving a flow of requests and  when a request arrives at a saturated node, i.e. a node whose capacity is fully utilized, an allocation algorithm may attempt to re-allocate the request to a non-saturated node.  For the algorithms considered, the re-allocation comes at a price: either an extra-capacity  is required in the system or the processing time of a re-allocated request is increased. The paper analyzes the properties of the equilibrium points of the asymptotic associated dynamical system when the number of nodes gets large. At this occasion the classical model of {\em Gibbens, Hunt and Kelly} (1990) in this domain is revisited. The absence of known Lyapunov functions for the corresponding dynamical system complicates significantly the analysis.  Several techniques are used:  Analytic and scaling methods to identify the equilibrium points. We identify the subset of parameters for which the limiting stochastic model of these networks  has multiple equilibrium points.  Probabilistic approaches, like coupling, are used to prove the stability of some of them. A criterion  of exponential stability with the spectral gap of the associated linear operator of equilibrium points is also obtained. 
\end{abstract}

\maketitle

 \vspace{-5mm}

\bigskip

\hrule

\vspace{-3mm}

\tableofcontents

\vspace{-1cm}

\hrule

\bigskip

\section{Introduction}
In this paper we study the time evolution properties of  large stochastic networks with  finite capacity nodes.  Each node of these networks receives a flow of jobs, it has a maximal number of requests which can be present at the same time, it is the capacity of the queue. If a job is accepted, it is getting served immediately upon arrival. In the following, the {\em saturation} of a node  will refer to the fact that its current number of requests is maximal. If a request arrives at a saturated node, i.e.\ with no place left to be accommodated, it may be rejected or allocated to another node according to some allocation algorithm. In this case, by borrowing the terminology of communication networks, it is said that the request is {\em re{-}routed}.

We study two  classes of re-routing algorithms. For both of them, the re-routing of a request comes at a price for the network, either with a larger capacity required or with a longer processing/sojourn time. They are defined as follows. If a request accepted at its arrival node, it  is processed at rate $\mu_1{>}0$. Otherwise, if a request cannot be accommodated at its arrival node:
\begin{enumerate}
\item {\bf The Routing with Increased Sojourn Time (RIST) Algorithm.}\\
A non-saturated node is chosen at random to accommodate the request  which  is processed at rate $\mu_2$ with  $0{<}\mu_2{\le}\mu_1$. A rerouted job stays, on average,  longer in the network for this algorithm.   If all nodes are saturated, the request is rejected. 

This type of model  is used to take into account the fact that, in some contexts, the transfer time of a rerouted job is not negligible.  A special case has already been analyzed in Malyshev and Robert~\cite{Malyshev} when the capacity of each node is $1$. See also Remark~2.1 of Tibi~\cite{Tibi}.

A variant of this algorithm is also discussed in Section~\ref{RiptSec}: When an arriving job finds a node saturated, it picks another node at random, again and again until it finds a non-saturated node, provided it makes less than $p_0$ attempts, otherwise it is rejected. The RIST algorithm corresponds to the case $p_0{=}{+}\infty$. 

\medskip

\item {\bf The Dynamic Alternative Routing (DAR) Algorithm},\\ {\em Gibbens, Hunt and Kelly} (1990).\\
Two other nodes are chosen at random. If both of them are non-saturated, the request takes one place in each of them. Otherwise, the request is rejected. This algorithm has been initially considered by Gibbens et al.~\cite{Gibbens} in 1990 to cope with congestion in communication networks.  The nodes are links of the network and connections are established on links. When a connection requires an already  saturated link $(AB)$ connecting two vertices $A$ and $B$, the algorithm attempts to establish the connection between $A$ and $B$ by taking another vertex $C$ at random and by using  the two links  $(AC)$ and $(CB)$.  See also Kelly~\cite{Kelly} and Marbukh~\cite{Marbukh}.

\end{enumerate}
The main goal of the mathematical studies  of these networks is of quantifying the benefit of rerouting mechanisms. To determine if it is worthwhile  to design routing algorithms rather than doing nothing, i.e.\ rejecting right away  jobs  arriving at saturated nodes. For this purpose,  the probability  that, at equilibrium,  a request is 
\begin{itemize}
\item[a)]  accepted without re-routing;
\item[b)]  rejected, i.e.\ that it cannot be accommodated even by re-routing
\end{itemize}
are the main quantities of interest. 

We first have a non-formal presentation of the problems associated to these algorithms.  The main problem with re-routing is the following. If there is a significant number of saturated nodes, then an important fraction of the resources of the network (capacity, processing time) will be consumed by the re-routed jobs, making re-routing more likely, to the detriment of the criterion associated to a). Furthermore, if there are too many saturated nodes, the loss rate may even be non-negligible, affecting the criterion associated to b).

For the DAR algorithm it has been shown  by Gibbens et al.~\cite{Gibbens}, through some approximations and  numerical experiments, that  these algorithms  exhibit in some cases an unpleasant property. It may happen that the network can stay for a very long amount of time in different regimes (set of states): one where most of requests/jobs are accepted without re-routing and other ones for which a significant fraction of jobs are re-routed. This is a metastability property which is well known in statistical physics. Roughly speaking, there are multiple stable sets of states and the switching time between them is ``large''. Though this property is closely linked to the existence of multiple equilibrium points, it will not be discussed in this paper. See den Hollander~\cite{Hollander}, Bovier and den  Hollander~\cite{Bovier} and Olivieri et al.~\cite{Olivieri} for example.

The paper Gibbens et al.~\cite{Gibbens} in 1990  has attracted a lot of attention, mainly because of the  original stability properties that were suggested in this study, at least in a stochastic network context. See also Marbukh~\cite{Marbukh}. It had a strong impact in the sense that it stressed the undesirable phenomena that can happen without some care in the design of allocation algorithms. Nevertheless, outside the mean-field result of Graham and M\'el\'eard~\cite{GM} in 1993,  there have been few rigorous mathematical results on this important class of models since the appearance of that paper. See Section~4.3 of Kelly~\cite{Kelly}.

We introduce the mathematical framework used to study these two classes of algorithms. 

\subsection*{Mathematical Context: Mean-Field Convergence}
It is assumed that the requests arrive  at each of the $N$ nodes with finite capacity $C$  according to a Poisson process with rate $\lambda$. The sojourn times of the requests at the node are exponentially distributed, with respective parameters $\mu_1$ and $\mu_2$ for RIST, and $\mu_1$ for DAR. If the state of the $i$th node, $1{\le}i{\le}N$, at time $t{\geq}0$ is given by $Z_i^N(t)$ then, for both classes of algorithms, the process of empirical distribution $(\Lambda^N(t))$, with
\begin{equation}\label{EmpIntro}
\Lambda^N(t)=\frac{1}{N}\sum_{i=1}^N \delta_{Z_i^N(t)},\quad t{\ge}0,
\end{equation}
where $\delta_a$ is the Dirac mass at $a$, has the Markov property. The state space ${\cal X}$ of the process $(Z_i^N(t))$ is finite,
\[
\begin{cases}
  {\cal X}=\{(x,y){\in}\N^2: x{+}y{\le} C\} & {\text{\rm (RIST)}}\\
  {\cal X}=\{0,\ldots,C\}, & {\text{\rm (DAR) }}
 \end{cases}
\]
consequently  $\Lambda^N(t)$, the state of a random node belong to ${\cal P}({\cal X})$,  the set of probability distributions on ${\cal X}$.

Under some mild conditions on the initial state, and with some restrictions for the RIST algorithm, see Section~\ref{RiptSec1}, it can be shown that the sequence of stochastic processes $(\Lambda^N(t))$ is converging in distribution to a deterministic measure-valued process $(\Lambda(t))$. As a consequence, the propagation of chaos property holds: in the limit, the states of a finite subset of nodes become independent.  See Sznitman~\cite{Sznitman}.  It can be shown that $(\Lambda(t))$ is the solution of a non-linear Fokker-Planck equation, see Frank~\cite{Frank} for example,
\begin{equation}\label{FKP}
\frac{\diff}{\diff t} \Lambda(t)=\Lambda(t) \cdot Q_{\Lambda(t)},
\end{equation}
where, for $m{\in}{\cal P}({\cal X})$, $Q_m$ is the $Q$-matrix of the reversible irreducible Markov process with the invariant distribution $\pi_m$. For example,  for the  DAR algorithm, $Q_m$ is the $Q$-Matrix of an $M/M/C/C$ queue with service rate $\mu_1$ and arrival rate $\lambda h(m(C))$, where $h$ is a quadratic function. See Relation~\eqref{GHK} below. 

We now review the main problems in this context.
\subsection{Existence, Number and Locations of Equilibrium Points.}
An element $m{\in}{\cal P}({\cal X})$ is an equilibrium point of the dynamical system~\eqref{FKP}, if $m{=}\pi_m$ or, equivalently,
\begin{equation}\label{EFKP}
m\cdot Q_m=0. 
\end{equation}
This equation does not have, in general, an explicit solution and, worse, it is even quite difficult to determine the number of these solutions. For example, for the DAR algorithm, the striking observation of Gibbens et al.~\cite{Gibbens}  has shown that, for some specific parameters,  numerical experiments seem to indicate that there may be three solutions.
But, to the best of our knowledge, this statement does not seem to have been established in a more formal way. See  p.~375 of Hunt and Kurtz~\cite{HuntKurtz}.

It should be noted that, in the remarkable experiments of Gibbens et al.~\cite{Gibbens}, the authors have been able to find   convenient  numerical values of ratio of  the average load per node to  capacity  for which the associated dynamical system has three equilibrium points. This is  in fact not that easy, since, as we will prove in Section~\ref{DARsec}, this phenomenon occurs only if this ratio is in an interval of  width $.063$.

Equation~\eqref{EFKP} can be reduced to a  polynomial equation of degree $C$ involving partial sums of the exponential series. This has some (formal) similarities with  the celebrated Erlang fixed point equation for loss networks.  In this case there is also an asymptotic independence property but it is due to a stochastic averaging principle rather than a mean-field convergence. See Kelly~\cite{Kelly}.

For Erlang systems, this part of the study seems to rely more on analytic methods than probabilistic arguments.  This is probably one of the difficulties of these problems: little intuition can be, a priori, extracted from these polynomial equations.  See  Antunes et al.~\cite{Antunes}, Dawson~\cite{Dawson}, Muzychka~\cite{Muzychka} and Rybko and Shlosman~\cite{Rybko} for the analysis of  other ``large'' queueing models.

\subsection{Stability Properties of Equilibrium Points.}
Concerning the properties of equilibrium points of the dynamical system~\eqref{FKP}, there are two overlapping aspects.

\medskip

\noindent {\sc The convergence of $(\Lambda(t))$ to the equilibrium point.}\\
Assuming that Equation~\eqref{EFKP} has a unique solution, i.e.\ the non-linear dynamical system has therefore a unique equilibrium distribution.  The convergence of the dynamical system $(\Lambda(t))$ is in general a challenging issue.  The dynamical system $(\Lambda(t))$ is associated to a non-linear Markov process $(Z(t))$ with values in ${\cal X}$. Due to the time-inhomogeneity of the dynamics, the classical results of convergence of Markov processes cannot be used. 

For a  large class of  examples of non-linear diffusion processes, related to Langevin evolution  equation, there are nevertheless numerous results concerning the rate of  convergence to equilibrium. Furthermore, an exponential decay is proved with explicit bounds for the Wasserstein distance between two solutions starting from different initial states. Several key ingredients are used in this context: some geometric properties, related to curvature,  to prove an exponential decay of the time evolution of the relative entropy with respect to the equilibrium measure and some functional inequalities. See Carrillo et al.~\cite{Carillo} and reference therein. 

For discrete state spaces  but in a time  homogeneous setting,  Caputo et al.~\cite{Caputo} and Dai Pra and Posta~\cite{Daipra} have  adapted some of the  methods of  the diffusion framework to get explicit bounds on the rate of exponential convergence to equilibrium.  Erbar and Maas~\cite{ErbarMaas} and Maas~\cite{Maas}  have recently developed some tools, the analogue of the geometric characteristics used in the diffusive case, to have a general approach to these problems in a discrete state space. Some interesting but specific examples of random walks have been already investigated with these methods, see for example Erbar et al.~\cite{EMT}. Their use in practice, to get explicit constants on the exponential rate of convergence to equilibrium,  are, as it can be expected,  limited for the moment.  In a non-linear setting,  examples are even more rare. See Thai~\cite{Thai} which investigates the case of birth and death processes whose birth and death rates satisfy a convexity relation. 

\noindent {\sc The local stability of an equilibrium.}\\
Given a solution $m_0$ of Equation~\eqref{EFKP}, the stability property of $m_0$ is the fact that if the initial point of the dynamical system $(\Lambda(t))$ defined by Relation~\eqref{FKP} is in a sufficiently small neighborhood of $m_0$, then $(\Lambda(t))$ converges to $m_0$ and, perhaps, exponentially fast. This is also important from the point of view of qualitative properties of the algorithms, since it asserts that the equilibrium point is meaningful. It suggests that there is a set of states where the network will stay ``for some time''.  A  more ambitious goal would be of determining also the {\em basin of attraction} of the stable points, i.e.\ the set of initial states from which they can be reached.  We are, in fact, far from that here.

Unfortunately, even in a simpler setting, when there is a unique equilibrium, it is surprisingly difficult to prove such a stability property for the models of this paper.  The case of the non-linear $M/M/1$ process in Section~\ref{nlMM1} is striking from this point of view. In a finite state space,  if $m_0$ is an equilibrium point and if the  eigenvalues of a Jacobian matrix associated to Relation~\eqref{FKP} at $m_0$ have negative real part then, under some mild regularity conditions, $m_0$ is a locally stable equilibrium point by  Poincar\'e-Lyapunov's Theorem. See Section~7.1 of Verhulst~\cite{Verhulst} for example.  This is our case of course but we have not been able to derive tractable stability results with this criterion.  Furthermore, in an infinite dimensional context, like, for example, the non-linear $M/M/1$ queue analyzed in Section~\ref{DARsec}, there is an additional complication related to  the choice of the norm between probability distributions which has an important impact.  

In the stochastic networks literature, proofs of local stability of specific examples are rare,  Antunes et al.~\cite{Antunes} establishes, through a dimension reduction, a criterion of local stability for equilibrium points of an Erlang network where jobs move from one node to another after completing their services. See also Tibi~\cite{Tibi} and Budhiraja et al.~\cite{Budhiraja} for a discussion on this topic.

\medskip
\noindent {\sc Stability via Lyapunov Functions.}\\
In some cases,  dynamical systems  associated to Relation~\eqref{FKP} may admit a Lyapunov function, i.e.\ a  function $F{:}{\cal P}(\N){\mapsto}\R_+$, such that
\[
\left.\frac{\diff}{\diff t} F(\Lambda(t))\right|_{t=0} < 0, 
\]
for $\Lambda(0){=}m{\in}{\cal P}(\N)$, and  $F(m){\not=}0$. In the case of a unique equilibrium, the existence of such a function may give the desired convergence to equilibrium. An explicit representation of such a function $F$  contains in fact a lot of information on the dynamical system. The state space being the set of probability distributions on a finite or countable set, examples with a Lyapunov function in such a context are quite rare. Antunes et al.~\cite{Antunes} has such a function for Erlang networks with a Jackson-type routing. The Lyapunov function is expressed in terms of a relative entropy with respect to the invariant distribution of some single Erlang queue and a complementary term related to the non-linear dynamics.

Tibi~\cite{Tibi} and Budhiraja et al.~\cite{Budhiraja} have investigated the conditions  under which  a Lyapunov function based on  relative entropy can be constructed. It turns out that, in practice, the  possibilities are in fact limited among ``classical'' models. Both papers mention the fact that their respective (equivalent)  conditions ``{\em local balance}'' or ``{\em local Gibbs}'' for having a Lyapunov function {\em cannot hold} for the DAR algorithm. It  seems to be also the case for the RIST algorithm. This may  partially explain the lack of progress in the mathematical analysis of the DAR algorithm since the original article appeared.

\subsection{Contributions}
The original motivation of this paper could be summarized as follows: In absence of Lyapunov functions, how can we study the equilibrium properties of these algorithms?  We have used a set of quite diverse techniques. For both algorithms  the arrivals of requests are Poisson with rate $\lambda{>}0$ and  their processing times are exponentially distributed. 
\begin{enumerate}
\item {\sc Scaling methods for the number of  equilibrium points.}\\
  For the RIST algorithm with an infinite number of retrials, we are able to determine exactly the set of equilibrium points of the dynamical system associated to the mean-field limit of this system.  See Propositions~\ref{PhiProp} and~\ref{theo1}.

Because of a singularity in the coefficients of the asymptotic dynamical system, a solution  may die in finite time. It turns out that there may be one, two or three equilibria and that one of them is related to this singularity. It corresponds to the case when the network has mostly re-routed requests, see Proposition~\ref{theo1} of Section~\ref{sec1-Sat} for a more precise description. In Section~\ref{Ript1sec}, we show that for the variant of the RIST algorithm with one retrial, the corresponding system of ODEs is in this case without singularity and that there may also be three equilibrium points.

  For the DAR algorithm, by taking the global input rate proportional to the capacity $C$, $\lambda{=}\nu C$ for some $\nu$, it is shown that, with a convenient scaling, as $C$ gets large, the dynamical system converges to a system of ODEs. This asymptotic dynamical system can be expressed in terms of a mean-field limit of a network of interacting  $M/M/1$ queues with infinite capacity. It can be described in terms of a non-linear $M/M/1$ queue.  In this case too, there is an equilibrium regime with an intuitive interpretation as an underloaded regime. See Section~\ref{2StabSec}.

With this scaling analysis of the equilibrium equations, we can give the  conditions on the parameters of the network so that for a fixed, but sufficiently  large capacity $C$, there are three equilibrium points, as suggested and conjectured by Gibbens et al.\cite{Gibbens} but, to the best of our knowledge, not proved until now. Additionally, the limiting values of these equilibrium points when $C$ goes to infinity are identified. See Theorem~\ref{ThNbFP}.
\medskip
\item {\sc Probabilistic approach to prove stability.}\\
We show the stability of some of the equilibrium points associated to congested regimes of the RIST and DAR algorithm. Coupling methods are used, by constructing an ad-hoc order relation and by deriving several technical estimates. See Propositions~\ref{theo1} and~\ref{Stab2prop}. 
\medskip
\item {\sc Spectral approach to exponential stability.}\\
For the other equilibrium points, the stability problem is much harder.   They are defined with the help of solutions of some polynomial equation without much insight on the significance of them.  This is where, when available,  a Lyapunov function is useful.  We have tried to derive a stability criterion in terms of the spectral gap $\kappa$ of the linear Markov process associated to the equilibrium point. This quantity is known for some of these classical models like the Erlang queue or the $M/M/1$ queue. See Chen~\cite{Chen} and van Doorn~\cite{vanDoorn} for example. The (rough) idea is that if the rate of convergence to equilibrium is sufficiently large, then the non-linear perturbations will not take the trajectory away from the neighborhood of the equilibrium point. Unfortunately this intuitive picture is difficult to establish rigorously. See Theorem~\ref{theoNLMM1} for an example of such a situation. By using an $L_2$-norm and with several estimates, we have nevertheless  been able to establish some exponential stability criteria in terms of the spectral gap for the RIST algorithm and for the non-linear $M/M/1$ queueing model of the DAR algorithm. This is a surprisingly difficult problem, even when the dynamical system has a unique fixed point.  A good example is the non-linear $M/M/1$ queue of Section~\ref{nlMM1} with $\nu{>}1$. Only  a partial result for this case has been obtained.
\end{enumerate}

\subsection*{Addendum}
During the writing of this paper, in August 2018, we have learned the sad news of the death of Richard Gibbens.  As it is plainly clear, his remarkable paper with his colleagues in 1990 has been the main motivation of this work. We would like to pay tribute  to his memory. 

\section{The Routing with Increased Sojourn Time (RIST) Algorithm}\label{RiptSec}
For $i{\in}\{1,\ldots,N\}$, requests arrive at node~$i$ with capacity $C$ according to a Poisson process with rate $\lambda$. If node~$i$ is not full at one of this instants, the corresponding request is accepted and its sojourn time is exponentially distributed with rate $\mu_1$, it will be referred to as a class~$1$ customer. If node~$i$ is full, another node is picked at random. If this node is not saturated, the request is allocated at this node. Otherwise, another node is picked at random, the maximum number of attempts is limited to $p_0{\in}\N{\cup}\{{+}\infty\}$. Otherwise, the request is rejected. We will mostly investigate the case $p_0{=}{+}\infty$. The sojourn time of rerouted requests is exponentially distributed with rate $\mu_2{<}\mu_1$,  they are defined as class~$2$ customers.
\subsection{The Associated Dynamical System}\label{RiptSec1}
\addcontentsline{toc}{section}{\thesubsection \hspace{3.5mm} The Associated Dynamical System}

We introduce some notations used throughout this section.  The state space of a node  is given by ${\cal X}$ with
\begin{equation}\label{calX}
  \begin{cases}
    {\cal X}{=}\{z{=}(x,y){\in}\N^2\,|\, x{+}y{\leq} C\},\\
    {\cal X}_+{=}\{z{=}(x,y){\in}\N^2\,|\, x{+}y{<} C\}  \text{ and }{\cal X}_+^c{=}{\cal X}{\setminus}{\cal X}_+{=}\{z{=}(x,y){\in}\N^2\,|\, x{+}y{=} C\} .
  \end{cases}
\end{equation}
If $z{=}(x,y){\in}{\cal X}$, $x$  [resp. $y$] is the number of class~1 [resp.~2] customers.

The set of probability distributions on ${\cal X}$ is denoted by ${\cal P}({\cal X})$ and  ${\cal C}(\R_+,{\cal P}({\cal X}))$ is the set of continuous functions with values in ${\cal P}({\cal X})$. If $\zeta{\in}{\cal P}({\cal X})$ and $f$ is a real-valued function on ${\cal X}$ and $A{\subset}{\cal X}$,  we denote
\[
\croc{\zeta,f}{=}\int f(z)\, \zeta(\diff z)\text{ \ and\  } \zeta(A){=}\int_A \zeta(\diff z).
\]

The state space of the process describing the whole network is  ${\cal S}_N{\steq{def}}{\cal X}^N$. For $t{\ge}0$ and $i{\in}\{1,\ldots,N\}$, $(X_i^N(t))$ [resp.  $(Y_i^N(t))$ ] denotes the number of class~$1$ [resp.~$2$]  customers at node $i$. 

For $\mu{\in}\{\mu_1,\mu_2\}$, $({\cal N}_\mu^{ij})$ is an i.i.d sequence of Poisson processes with rate $\mu$. They associated to the service times of class $1$ and $2$ customers at the nodes of the networks. For $i$, $j{\ge}$ ${\cal N}_\mu^{ij}$ is for the service times of $j$th server of the $i$th node. Similarly, $({\cal N}_\lambda^i){=}((t_n^i))$  is an i.i.d. sequence of Poisson processes with rate $\lambda$. For $i{\in}\N$, ${\cal N}_\lambda^i$ is the arrival process at node $i$. All Poisson processes are assumed to be independent. Additionally $(U_{nk}^i)$ is an i.i.d. sequence of uniform random variables on $[0,1]$,  also referred to as ``marks'' of the point process $\overline{\cal N}_\lambda^i$ defined by
  \[
  \overline{\cal N}_\lambda^i\steq{def} \sum_{n\in\N} \delta_{(t_n^i,(U_{nj}^i, j\ge 1))},
  \]
  $\overline{\cal N}_\lambda^i$ is a {\em  marked Poisson point process}. See Chapter~5 of Kingman~\cite{Kingman}. If ${\cal M}{\steq{def}}[0,1]^\N$ is the space of marks, then $ {\cal N}_\lambda^i(\diff t){=} \overline{\cal N}_\lambda^i(\diff t,{\cal M})$ is the arrival process at node~$i$.  

A mark is associated to an arrival instant, it is used to determine to which queue a customer goes if the node where it arrives is saturated. For node $i$, this is (formally) done with a functional $T_i^N$ on ${\cal S}{\times}{\cal M}$ defined as follows.  For $(z,u){=}((x_i,y_i),(u_j)){\in}{\cal S}{\times}{\cal M}$, define $(v_j(u)){=}(\lfloor N u_j\rfloor)$,  with $0$ identified to $N$. For $i$ and $n{\in}\N$, $(v_j(U_{nj}^i))$ is an i.i.d sequence uniformly distributed on $\{1,\ldots,N\}$.  With this notation, we take
\begin{equation}\label{defT}
  T_i^N(z,u){=}
  \begin{cases}
    i &\text{ if } x_i{+}y_i{<}C,\\
    v_k(u) &\text{ otherwise, if } k{=}\inf\{j\mid x_{v_j(u)}{+}y_{v_j(u)}{<}C\}{\le}p_0,\\
    {+}\infty &\text{ otherwise }.
  \end{cases}
\end{equation}
The variable $T_i^N(z,u)$ will be used in the following only when node $i$ is saturated. 

When the system is in a state $z{\in}{\cal S}_N$ with  $x_i{+}y_i{=}C$, for $i{\in}\{1,\ldots,N\}$, if there is an arrival  with a mark $u$  at node~$i$, then, provided that   $T_i^N(z,u)$ is less than $p_0$, the queue with index  $T_i^N(z,u)$  receives the  re-routed customer. When $p_0{=}{+}\infty$,  if $x_j{+}y_j{<} C$ for some $j{\in}\{1,\ldots,N\}$, then it is easy to check that
\[
\P(T_i(z,U){=}j)={1}{\bigg/}\sum_{k=1}^N \mathbbm{1}_{{\cal X}_+}(x_k,y_k),
\]
where ${\cal X}_+$ is defined by Relation~\eqref{calX}.

If the addition of marks looks somewhat formal, it has the advantage of giving a neat framework to handle the stochastic calculus associated to the evolution equations of the system. The martingale property will be with respect to the filtration $({\cal F}_t)$ defined by, for $t{\ge}0$,
\[
{\cal F}_t=\left.\sigma\left\{\overline{\cal N}_\lambda^i([0,s]{\times} B), {\cal N}_{\mu_1}^{ij}([0,s]), {\cal N}_{\mu_2}^{ij}([0,s])\right| i, j{\in}\N, s{\le}t, B{\in}{\cal B}({\cal M})\right\}.
\]
\subsubsection*{Evolution Equations}
We assume from now on that $p_0{=}{+}\infty$, a customer is rejected only if all nodes are saturated. We consider briefly the case of a finite $p_0$ in Proposition~\ref{DynSysProp2} and in Section~\ref{Ript1sec}.

The state of the system is represented by a process $(Z^N(t)){=}((X_i^N(t),Y_i^N(t))$ which is c\`adl\`ag and  satisfies the following stochastic differential equations, for $1{\le}i{\le}N$,  
\begin{align}
  \diff X_i^N(t) &= \ind{(X_i^N(t-),Y_i^N(t-)){\in}{\cal X}_+}{\cal N}_{\lambda}^i(\diff t)-\sum_{\ell=1}^{X_i^N(t-)} {\cal N}_{\mu_1}^{i,\ell}(\diff t), \label{SDEX}\\
  \diff Y_i^N(t) &= \sum_{j=1}^N \ind{\substack{(X_j^N(t-),Y_j^N(t{-})){\in}{\cal X}_+^c\\T_j(u,Z^N(t-))=i}}\overline{{\cal N}}_{\lambda}^{j}(\diff t, \diff u)\label{SDEY}\\
&\hspace{5cm}  -\sum_{\ell=1}^{Y_i^N(t-)} {\cal N}_{\mu_2}^{i,\ell}(\diff t),\notag
\end{align}
where $U(t{-})$ denotes the left limit of $U$ at $t{>}0$.

The first term of the right-hand side of Relation~\eqref{SDEY} corresponds to arrivals finding their arrival node saturated and are allocated to some non-saturated node by repeated random sampling of  nodes until one of them can accommodate it. Note that it is not excluded that all nodes are saturated, in this case the customer is rejected since it is not allocated anywhere.
\subsubsection*{Martingales}
We recall classical results on the martingales associated to marked Poisson point processes. See Sections~4.4 and~4.5 of Jacobsen~\cite{Jacobsen} for example. See also Last and Brandt~\cite{Last}.
If $h$ is a  bounded function on $\R_+{\times}\{1,\ldots,N,{+}\infty\}$, such that $h(\cdot,{+}\infty){\equiv}0$ and $\Lambda^N$ is the empirical distribution defined by Relation~\eqref{EmpIntro}, the process 
\begin{multline*}
\left(\int_{[0,t]\times{\cal M}}\hspace{-5mm} h\left(Z^N(s{-}),T_i^N(Z^N(s{-}),u)\right)\overline{\cal N}_\lambda^i(\diff s, \diff u)\right. \\ \left.{-}\lambda\sum_{j=1}^N \int_0^t \frac{h(Z^N(s),j)}{N\Lambda^N(s)({\cal X}_+)}\mathbbm{1}_{{\cal X}_+}\left(X_j^N(s),Y_j^N(s)\right)\diff s \right)
\end{multline*}
is a martingale whose previsible increasing process (quadratic variation) is given by
\[
\left(\lambda\sum_{j=1}^N \int_0^t \frac{h^2(Z^N(s),j)}{N\Lambda^N(s)({\cal X}_+)}\mathbbm{1}_{{\cal X}_+}\left(X_j^N(s),Y_j^N(s)\right)\diff s \right)
\]
For example, the integration of  SDE~\eqref{SDEY} and the compensation of the Poisson processes give the relation
\begin{multline*}
  Y_i^N(t) =  Y_i^N(0) +M_i^N(t) \\
  +\lambda \sum_{j=1}^N \int_0^t  \ind{\substack{(X_j^N(s),Y_j^N(s)){\in}{\cal X}_+^c\\(X_i^N(s),Y_i^N(s)){\in}{\cal X}_+}}\frac{1}{N\Lambda^N(s)({\cal X}_+)}\,\diff s -\mu_2\int_0^t Y_i^N(s) \,\diff s,
\end{multline*}
which can be written under the more compact form
\begin{multline}\label{eqY}
  Y_i^N(t) =   Y_i^N(0) +M_i^N(t)\\
    + \lambda\int_0^t \ind{(X_i^N(s),Y_i^N(s)){\in}{\cal X}_+} \frac{\Lambda^N(s)({\cal X}_+^c)}{\Lambda^N(s)({\cal X}_+)}\,\diff s -\mu_2\int_0^t Y_i^N(s) \,\diff s,
\end{multline}
where $(M_i^N(t))$ is a martingale whose previsible increasing process is
\begin{multline}\label{eqcrocY}
\left(\croc{M_i^N}(t)\right)\\=\left( \lambda\int_0^t \ind{(X_i^N(s),Y_i^N(s)){\in}{\cal X}_+} \frac{\Lambda^N(s)({\cal X}_+^c)}{\Lambda^N(s)({\cal X}_+)}\,\diff s+\mu_2\int_0^t Y_i^N(s) \,\diff s,\right).
\end{multline}
\subsubsection*{Empirical Distributions}
The empirical distribution process $(\Lambda^N(t))$  associated to $(Z^N(t))$ is defined by, for $t{\ge}0$,
\begin{equation}\label{Emp}
\Lambda^N(t)(f){=}\int_{{\cal X}} f(z)\Lambda^N(t)(\diff z) \steq{def}\frac{1}{N}\sum_{i=1}^N f\left(X_i^N(t),Y_i^N(t)\right),
\end{equation}
for any non-negative function $f$ on ${\cal X}$. It is a stochastic process with values in the set  ${\cal P}({\cal X})$ of probability distributions on ${\cal X}$.

As in the derivation of Relation~\eqref{eqY}, the integration of Equations~\eqref{SDEX} and~\eqref{SDEY} and the compensation of Poisson processes give the relation
\begin{multline}\label{EmpN}
  \croc{\Lambda^N(t),f}=   \croc{\Lambda^N(0),f}+M_f^N(t)\\
  +\lambda\int_0^t\int_{{\cal X}_+}  \nabla_1^+(f)(z)\,\Lambda^N(s)(\diff z)\,\diff s-\mu_1\int_0^t\int_{z{=}(x,y){\in}{\cal X}} x\nabla_1^-(f)(z)\,\Lambda^N(s)(\diff z)\,\diff s\\
  + \lambda\int_0^t \int_{{\cal X}_+} \nabla_2^+(f)(z)\frac{\Lambda^N(s)({\cal X}_+^c)}{\Lambda^N(s)({\cal X}_+)}\,\Lambda^N(s)(\diff z)\,\diff s\\
-\mu_2\int_0^t \int_{z{=}(x,y){\in}{\cal X}} y\nabla_2^-(f)(z)\,\Lambda^N(s)(\diff z)\,\diff s,
\end{multline}
for a real-valued function $f$ on ${\cal X}_+$, where, if $z{=}(x,y)$,  $\nabla_1^\pm(f)(z){=}f(x{\pm}1,y){-}f(x,y)$ and $\nabla_2^\pm(f)(z){=}f(x,y{\pm}1){-}f(x,y)$ are the gradient operators, and $(M_f^N(t))$ is a martingale. 

\begin{proposition}[Dynamical System with an Unbounded Number of Retrials]\label{DynSysProp}
If $p_0{=}{+}\infty$ and  $(\Lambda(t))$ is the unique solution of the differential equation
\begin{multline}\label{DynSys}
  \frac{\diff}{\diff t}\croc{\Lambda(t),f}= \lambda\croc{\Lambda(t), \nabla_1^+(f)\mathbbm{1}_{{\cal X}_+}}+\mu_1\croc{\Lambda(t),I_1\nabla_1^-(f)}\\
  + \lambda \frac{\Lambda(t)({\cal X}_+^c)}{\Lambda(t)({\cal X}_+)} \croc{\Lambda(t),\nabla_2^+(f)\mathbbm{1}_{{\cal X}_+}} +\mu_2\croc{\Lambda(t),I_2\nabla_2^-(f)},
  \end{multline}
for $t{<}H_0(\Lambda)$, where $I_1(x,y){\steq{def}}x$ and $I_2(x,y){\steq{def}}y$, and
\[
H_0(\zeta)\steq{def}\inf\{t{>}0\mid \zeta(t)({\cal X}_+){=}{0}\},\qquad\text{ for }  (\zeta(t)){\in}{\cal C}(\R_+,{\cal P}({\cal X})),
\]
then, for the convergence in distribution of processes,
\[
\lim_{N\to+\infty} \left(\Lambda^N(t),t{<}H_0\left(\Lambda^N\right)\right)= (\Lambda(t),t{<}H_0(\Lambda)).
\]
\end{proposition}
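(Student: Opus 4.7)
The approach I propose is the classical tightness-plus-identification strategy for mean-field limits, with stopping times used to tame the singular coefficient $\Lambda^N(s)(\mathcal{X}_+^c)/\Lambda^N(s)(\mathcal{X}_+)$ appearing in the drift of \eqref{EmpN}.

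First I would settle existence and uniqueness of the limit ODE. Since $\mathcal{X}$ is finite, \eqref{DynSys} lives in the finite-dimensional simplex $\mathcal{P}(\mathcal{X})$ and its vector field is rational in the coordinates, hence locally Lipschitz on the open set $\{\zeta\in\mathcal{P}(\mathcal{X}) : \zeta(\mathcal{X}_+)>0\}$. Cauchy-Lipschitz then yields, from any $\Lambda(0)$ with $\Lambda(0)(\mathcal{X}_+)>0$, a unique maximal solution whose lifetime is exactly $H_0(\Lambda)$. Next, for a fixed $\varepsilon>0$ I introduce the stopping times
\[
\tau_N^\varepsilon \steq{def} \inf\{t\geq 0 : \Lambda^N(t)(\mathcal{X}_+)\leq \varepsilon\},
\qquad
\tau^\varepsilon \steq{def} \inf\{t\geq 0 : \Lambda(t)(\mathcal{X}_+)\leq \varepsilon\},
\]
and work with the stopped processes $\Lambda^N(\cdot\wedge\tau_N^\varepsilon)$. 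On the stopped interval the singular ratio is bounded by $1/\varepsilon$, so the compensators in \eqref{EmpN} are Lipschitz functionals of $\Lambda^N$ with constants depending only on $\varepsilon$, $\lambda$, $\mu_1$, $\mu_2$, $C$. Testing against $f=\ind{z}$ for each $z\in\mathcal{X}$, the quadratic-variation formula recalled just before \eqref{EmpN} gives $\croc{M_f^N}(t)=O(1/N)$, whence Doob's inequality yields $\sup_{t\leq T}|M_f^N(t)|\to 0$ in probability. Combined with compactness of $\mathcal{P}(\mathcal{X})$ and the bounded drift on the stopped interval, the Aldous-Rebolledo criterion gives tightness of the stopped empirical measures in $D(\mathbb{R}_+,\mathcal{P}(\mathcal{X}))$.

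Identification then proceeds by passing to the limit in the integral form of \eqref{EmpN} along any convergent subsequence, using that $\zeta\mapsto \zeta(\mathcal{X}_+^c)/\zeta(\mathcal{X}_+)$ is continuous on $\{\zeta(\mathcal{X}_+)\geq \varepsilon/2\}$, which covers everything needed on $[0,\tau^\varepsilon]$. Any accumulation point satisfies \eqref{DynSys} up to $\tau^\varepsilon$, so the uniqueness established above forces the whole sequence $(\Lambda^N(\cdot\wedge\tau_N^\varepsilon))$ to converge in distribution to $(\Lambda(\cdot\wedge\tau^\varepsilon))$. Since $(\Lambda(t))$ is continuous, $\tau^\varepsilon\uparrow H_0(\Lambda)$ as $\varepsilon\downarrow 0$; for any $T<H_0(\Lambda)$ one picks $\varepsilon$ small enough that $T<\tau^\varepsilon$, which yields the claimed convergence on $[0,H_0(\Lambda))$.

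The main obstacle is precisely the singular drift: without localization by $\tau_N^\varepsilon$, a direct Gronwall-type control of $\|\Lambda^N-\Lambda\|$ breaks down because $\zeta\mapsto \zeta(\mathcal{X}_+^c)/\zeta(\mathcal{X}_+)$ is neither bounded nor Lipschitz on $\mathcal{P}(\mathcal{X})$. The price of the localization is that one must verify that $\tau_N^\varepsilon$ converges to $\tau^\varepsilon$ in distribution, so that the stopped limit really coincides with the limit of the stopped processes. This is not automatic but follows from the strict positivity of $\Lambda(t)(\mathcal{X}_+)$ on $[0,\tau^\varepsilon)$ together with the continuity of the limiting path, which make the hitting-time functional continuous at $\Lambda$ in the Skorokhod topology.
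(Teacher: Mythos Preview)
Your argument is correct and follows the same tightness-plus-identification strategy as the paper, including the use of $\varepsilon$-level stopping times to handle the singularity at $\Lambda(t)(\mathcal{X}_+)=0$. The one observation you miss, which the paper exploits, is that the singular drift in \eqref{EmpN} is in fact uniformly bounded without any localization: since
\[
\left|\frac{\Lambda^N(s)(\mathcal{X}_+^c)}{\Lambda^N(s)(\mathcal{X}_+)}\,\croc{\Lambda^N(s),\nabla_2^+(f)\mathbbm{1}_{\mathcal{X}_+}}\right|
\le \frac{\Lambda^N(s)(\mathcal{X}_+^c)}{\Lambda^N(s)(\mathcal{X}_+)}\cdot 2\|f\|_\infty\cdot \Lambda^N(s)(\mathcal{X}_+)
\le 2\|f\|_\infty,
\]
the ratio cancels against the mass on $\mathcal{X}_+$ carried by the test function. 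Hence the modulus-of-continuity criterion applies to the \emph{unstopped} process $(\Lambda^N(t))$ directly, and the paper obtains tightness globally. The stopping times (its $H_\eps$, your $\tau^\varepsilon$) are then reserved solely for the identification step, where continuity of $\zeta\mapsto \zeta(\mathcal{X}_+^c)/\zeta(\mathcal{X}_+)$ is genuinely needed to pass to the limit in the integral equation. Your approach of localizing already for tightness is not wrong, just slightly less economical; otherwise the two proofs are the same.
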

Note that the sequence of processes $(\Lambda^N(t))$  is in fact a sequence of finite-dimensional processes with dimension $\card({\cal X}_+)$. The convergence in distribution of the proposition refers to the case when the  space of c\`adl\`ag functions with values in $\R_+^{\card({\cal X}_+)}$ is endowed with the uniform norm.

The variable $H_0(\Lambda)$ is the {\em blow-up time} of the dynamical system~\eqref{DynSys}. If finite, it amounts to the fact that the system is completely saturated at the fluid scale. It will be seen in Section~\eqref{sec1-Sat} that the saturated state is a stable equilibrium of the network. Note that, because of its singular aspect,  it cannot be really defined through the ODEs associated to Relation~\eqref{DynSys}. 
\begin{proof}
For the martingale $({M}_f^N(t))$ of Relation~\eqref{EmpN},  with calculations  similar to the ones used in the derivation of Relation~\eqref{eqcrocY}, one gets the existence of a constant $K_T$ such that $\E(\croc{M}_f^N(T)){<}K_T/N$ and, by Doob's Inequality, the convergence in distribution  to $(0)$ of this martingale for the topology associated to uniform convergence on $[0,T]$.

Note that, for $s{<}t$,
  \[
  \left|\int_s^t \int_{{\cal X}_+} \nabla_2^+(f)(z)\frac{\Lambda^N(u)({\cal X}_+^c)}{\Lambda^N(u)({\cal X}_+)}\,\Lambda^N(u)(\diff z)\right|\leq 2\|f\|_{\infty}(t{-}s).
  \]
  By using the criterion of the modulus of continuity and Relation~\eqref{EmpN}, we get that the sequence of processes $(\Lambda^N(t))$ is tight in distribution for the topology of uniform convergence on compact sets. See Theorem~7.3 of Billingsley~\cite{Billingsley} for example.

The solution of Equation~\eqref{DynSys} lives in a finite dimensional state space, the set of probability distributions on ${\cal X}$. The system~\eqref{DynSys} can be seen as a set of ODEs. It has in particular a unique solution up to time $H_0(\Lambda)$.  If
\[
H_\eps(\zeta)=\inf\{t{>}0\mid \zeta(t)({\cal X}_+){>}\eps\}
\]
then $H_\eps(\Lambda^N)$ converges in distribution to $H_\eps(\Lambda)$. By the continuous mapping theorem used in Relation~\eqref{EmpN}, one gets that on the event $\{H_\eps{>}t\}$, the relation
\begin{multline*}
\croc{\Lambda(t),f}=\croc{\Lambda(0),f}{+}\lambda\int_0^t \croc{\Lambda(s), \nabla_1^+(f)\mathbbm{1}_{{\cal X}_+}}\,\diff s{+}\mu_1\int_0^t\croc{\Lambda(s),I_1\nabla_1^-(f)}\,\diff s\\
  + \lambda \int_0^t\frac{\Lambda(s)({\cal X}_+^c)}{\Lambda(s)({\cal X}_+)} \croc{\Lambda(s),\nabla_2^+(f)\mathbbm{1}_{{\cal X}_+}}\,\diff s +\mu_2\int_0^t\croc{\Lambda(s),I_2\nabla_2^-(f)}\,\diff s
\end{multline*}
holds. These equations can be seen as a system of ordinary differential equations, it has clearly a unique solution up to blow-up time $H_0$. The proposition is proved. 
\end{proof}
We state the analogous result when the number of retrials is finite, the notations are the same  as in Proposition~\ref{DynSysProp}. The proof of the proposition being simpler in this case, is skipped. 
\begin{proposition}[Dynamical System with a Maximum of $p_0$ Retrials]\label{DynSysProp2}
If $(\Lambda(t))$ is the unique solution of the differential equations, for $t{>}0$,
\begin{multline}\label{DynSys2}
  \frac{\diff}{\diff t}\croc{\Lambda(t),f}= \lambda\croc{\Lambda(t), \nabla_1^+(f)\mathbbm{1}_{{\cal X}_+}}+\mu_1\croc{\Lambda(t),I_1\nabla_1^-(f)}\\
  + \lambda \Lambda(t)({\cal X}_+^c)\left(\frac{1{-}[\Lambda(t)({\cal X}_+^c)]^{p_0}}{1{-}\Lambda(t)({\cal X}_+^c)}\right)\croc{\Lambda(t),\nabla_2^+(f)\mathbbm{1}_{{\cal X}_+}} +\mu_2\croc{\Lambda(t),I_2\nabla_2^-(f)},
  \end{multline}
then, for the convergence in distribution of processes, the relation
\[
\lim_{N\to+\infty} \left(\Lambda^N(t)\right)= (\Lambda(t))
\]
holds.
\end{proposition}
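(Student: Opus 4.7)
The plan is to follow the structure of the proof of Proposition~\ref{DynSysProp}, exploiting the fact that the coefficient of the rerouting term is now globally bounded, so that no blow-up occurs and no stopping times are needed.

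First I would write down the SDEs for $(X_i^N(t),Y_i^N(t))$ analogous to \eqref{SDEX}--\eqref{SDEY}, the only change being that the indicator of successful rerouting requires $T_j^N(Z^N(s{-}),u){\le}p_0$. Setting $q{=}\Lambda^N(s)({\cal X}_+^c)$, for a saturated source node $j$ the probability that its marked arrival ends up at a specified non-saturated node $i$ equals $\sum_{k=1}^{p_0} q^{k-1}/N{=}(1{-}q^{p_0})/[N(1{-}q)]$, since the $v_k(u)$'s are i.i.d.\ uniform on $\{1,\ldots,N\}$. Summing over the $Nq$ saturated source nodes yields intensity $\lambda q(1{-}q^{p_0})/(1{-}q)$ per non-saturated node, matching the coefficient in \eqref{DynSys2}. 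Compensating the Poisson processes then gives the analogue of \eqref{eqY} and, after averaging over $i$, the analogue of \eqref{EmpN}.

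The crucial observation is that the function $\varphi(q){\steq{def}}q(1{-}q^{p_0})/(1{-}q){=}q{+}q^2{+}\cdots{+}q^{p_0}$ extends continuously to $[0,1]$ with $\|\varphi\|_\infty{\le}p_0$. Consequently all four drift integrands in the analogue of \eqref{EmpN} are bounded by constants depending only on $\lambda,\mu_1,\mu_2,p_0,C,\|f\|_\infty$, yielding a uniform Lipschitz-in-$t$ estimate on $\croc{\Lambda^N(\cdot),f}$ modulo a martingale $(M_f^N(t))$. The analogue of \eqref{eqcrocY} gives $\E(\croc{M_f^N}(T)){\le}K_T/N$, so Doob's inequality yields $\sup_{t\le T}|M_f^N(t)|{\to}0$ in probability. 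Tightness of $(\Lambda^N(t))$ in the uniform topology on $\R_+^{\card({\cal X})}$ then follows directly from Theorem~7.3 of Billingsley~\cite{Billingsley}, with no recourse to the stopping times $H_\eps$ used in Proposition~\ref{DynSysProp}.

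Finally, the continuous mapping theorem applied to the integral form of the analogue of \eqref{EmpN} shows that every subsequential limit satisfies \eqref{DynSys2}; since the right-hand side of \eqref{DynSys2}, viewed as a polynomial vector field in the coordinates $(\Lambda(t)(z))_{z\in{\cal X}}$ on the compact simplex ${\cal P}({\cal X})$, is globally Lipschitz, classical ODE theory gives existence and uniqueness of a global solution, which identifies the limit uniquely and upgrades subsequential convergence to full convergence in distribution. The only non-routine step is the intensity computation of the first paragraph, an elementary geometric-series identity; once the bounded polynomial coefficient $\varphi$ has been extracted, everything else is standard mean-field machinery without the singular behaviour that necessitated the blow-up analysis of Proposition~\ref{DynSysProp}, which is why the authors may reasonably omit the proof.
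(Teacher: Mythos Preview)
Your proposal is correct and matches exactly what the paper intends: the authors skip the proof, noting only that it is simpler than that of Proposition~\ref{DynSysProp} because ``the non-linear term in the right-hand side of Relation~\eqref{DynSys2} is bounded by $p_0$, and is therefore without singularity.'' You have correctly identified both the geometric-series intensity computation yielding $\varphi(q){=}q{+}\cdots{+}q^{p_0}$ and the reason the stopping-time analysis via $H_\eps$ becomes unnecessary, so your write-up is precisely the omitted argument.
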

The non-linear term in the right-hand side of Relation~\eqref{DynSys2} is bounded by $p_0$, and is therefore without singularity.

\subsection{The Number of Equilibrium Points}\label{1LocSec}
\addcontentsline{toc}{section}{\thesubsection \hspace{3.5mm} The Number of Equilibrium Points}
We investigate the fixed points of the linearized version $(\Lambda_R(t))$ of the dynamical system~\eqref{DynSys} when the non-linear term $\Lambda(t)({\cal X}_+^c)$ is replaced by a constant $R{\in}(0,1)$. This dynamical system satisfies the following equations
\begin{multline}\label{LinDynSys}
\croc{\Lambda_R(t),f}{=}\croc{\Lambda_R(0),f}{+}\lambda\int_0^t \croc{\Lambda_R(s), \nabla_1^+(f)\mathbbm{1}_{{\cal X}_+}}\,\diff s\\{+}\mu_1\int_0^t\croc{\Lambda_R(s),I_1\nabla_1^-(f)}\,\diff s
  {+} \lambda \frac{R}{1{-}R}\int_0^t \croc{\Lambda_R(s),\nabla_2^+(f)\mathbbm{1}_{{\cal X}_+}}\,\diff s\\ +\mu_2\int_0^t\croc{\Lambda_R(s),I_2\nabla_2^-(f)}\,\diff s,
\end{multline}
for any real-valued function $f$ on ${\cal X}$, with the notations of Proposition~\ref{DynSysProp}. Recall that for any function $g$  and measure $\mu$ on ${\cal X}$,
\[
\croc{\mu,f}{=}\sum_{(x,y){\cal X}} f(x,y)\mu((x,y))
\]

The process $(\Lambda_R(t))$ describes the evolution of the  law of a classical Erlang model with capacity $C$ where two classes of customers arrive respectively at rate $\lambda$ and $\lambda R/(1{-}R)$ and are served at  rates $\mu_1$ and $\mu_2$. Its invariant distribution on ${\cal X}$ is given by
\begin{equation}\label{piR}
\pi_R(x,y)=\frac{1}{Z_R} \cdot \frac{\rho_1^x}{x!}\frac{\rho_2^y}{y!}\left(\frac{R}{1{-}R}\right)^y,\quad (x,y){\in}{\cal X},
\end{equation}
where $\rho_1{=}\lambda/\mu_1$ and $\rho_2{=}\lambda/\mu_2$ and $Z_R$ is the normalization constant,
\[
Z_R=\sum_{m=0}^C \sum_{x+y=m} \frac{\rho_1^x}{x!}\frac{\rho_2^y}{y!}\left(\frac{R}{1{-}R}\right)^y=\sum_{m=0}^C \frac{1}{m!} \left(\rho_1{+}\rho_2\frac{R}{1{-}R}\right)^m,
\]
and we have
\[
\pi_R\left({\cal X}_+^c\right)=\pi_R((x,y){\in}{\cal X}, x{+}y=C)=\frac{1}{Z_R} \frac{1}{C!} \left(\rho_1{+}\rho_2\frac{R}{1{-}R}\right)^C,
\]
Hence $\pi_R$ defined by~\eqref{piR}  is a fixed point of the dynamical system~\eqref{DynSys} if and only if   $R$ satisfies the relation,
\begin{equation}\label{PiRFix}
R=\pi_R\left({\cal X}_+^c\right).
\end{equation}
The goal of this section is of characterizing completely the solutions of Equation~\eqref{PiRFix}.

Note that Equation~\eqref{PiRFix} can be rewritten as $\Phi_{\rho_1,\rho_2}(z_R){=}0$ with, for $\rho_1$, $\rho_2{>}0$,
\begin{equation}\label{Phi}
\Phi_{\rho_1,\rho_2}(z)\steq{def}\frac{1}{z}\frac{(\rho_1{+}\rho_2 z)^C}{C!}-\sum_{m=0}^{C-1} \frac{ (\rho_1{+}\rho_2 z)^m}{m!},
\end{equation}
and $z_R{=}{R}/{(1{-}R)}$.
The next proposition determines the number of roots of the function $\Phi_{\rho_1,\rho_2}$. It gives all the non-singular equilibrium points of the ODEs~\eqref{DynSys} in the sense. Section~\ref{sec1-Sat} gives a formal presentation of another regime which yields a stable equilibrium when $\rho_2{>}C$.  See the remark below.
\begin{proposition}[Non-Singular Equilibrium Points]\label{PhiProp}
  If $C{\ge}2$ and $\rho_1{<}C$,
  \begin{enumerate}
  \item For $\rho_2{\in}[0,C)$, there is a unique root $z(\rho_1,\rho_2){>}0$ of $\Phi_{\rho_1,\rho_2}$. The function $\rho_2{\to}z(\rho_1,\rho_2)$  is increasing on  $(0,C)$ and if 
    \[
    z(\rho_1,C){\steq{def}}\lim_{\mathclap{\rho_2\nearrow C}}\uparrow z(\rho_1,\rho_2) 
    \]
    then  $z(\rho_1,C)$ is the unique root of $\Phi_{\rho_1,C}$ if $\rho_1{<}C{-}1$ and $z(\rho_1,C){=}{+}\infty$ otherwise. 
\item   For $\rho_2{>}C$, there exists a  non-increasing function $\phi_c{:}(C,{+}\infty){\to}(0,C{-}1)$  such that,
    \[
    \lim_{z\to+\infty} \phi_C(z)=0, \text{ and }     \lim_{z\searrow C} \phi_C(z)=C{-}1,
    \]
    and
      \begin{enumerate}
    \item if $\rho_1{\in}(0,\phi_C(\rho_2))$, $\Phi_{\rho_1,\rho_2}$  has two roots in $\R_+$;
    \item If $\rho_1{\in}(\phi_C(\rho_2),C)$,  $\Phi_{\rho_1,\rho_2}$  does not have any root.
    \end{enumerate}
    In the case $\rho_1{=}\phi_C(\rho_2)$, $\Phi_{\rho_1,\rho_2}$  has a unique root.
  \end{enumerate}
\end{proposition}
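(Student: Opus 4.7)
I first substitute $u = \rho_1 + \rho_2 z$, turning $\Phi_{\rho_1,\rho_2}(z) = 0$ into $\Psi(u) = (u - \rho_1)/\rho_2$ with $u > \rho_1$, where
\[
\Psi(u) := \frac{u^C}{C!\, S_C(u)}, \qquad S_C(u) := \sum_{m=0}^{C-1}\frac{u^m}{m!}.
\]
Writing $1/\Psi(u) = \sum_{k=1}^{C} C!/((C-k)!\,u^k)$ shows at once that $\Psi$ is continuous and strictly increasing from $0$ to $+\infty$ on $[0,+\infty)$, that $\Psi'(0) = 0$ (using $C \geq 2$), and gives the asymptotic expansion $\Psi(u) = u/C - (C-1)/C + r(u)$ with $r(u) \to 0$ at infinity. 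I then verify that $r(u) > 0$ for all $u \geq 0$: writing
\[
r(u) = \frac{(C-1)\,C!\, S_C(u) - Cu\sum_{m=0}^{C-2}(C-1)!\,u^m/m!}{C \cdot C!\, S_C(u)}
\]
and expanding, the numerator has coefficient $C!(C-1-k)/k! \geq 0$ at $u^k$ for $1\leq k \leq C-2$ (and $0$ at $u^{C-1}$), with strictly positive constant term $(C-1)C!$.

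\textbf{Descartes' rule of signs.} Clearing the denominator gives the polynomial
\[
P_{\rho_1,\rho_2}(u) = \rho_2 u^C - (u - \rho_1)C!\, S_C(u) = (\rho_2 - C)u^C + \sum_{k=1}^{C-1}\frac{C!(\rho_1-k)}{k!}\, u^k + \rho_1 C!,
\]
whose positive zeros are exactly the positive zeros of $\Phi_{\rho_1,\rho_2}$; moreover $P_{\rho_1,\rho_2} > 0$ on $[0,\rho_1]$ since there $-(u-\rho_1)C!\, S_C(u) \geq 0$. The sign of the coefficient at $u^k$, $1 \leq k \leq C-1$, is that of $\rho_1 - k$, and scanning the sequence of signs yields: at most one sign change if $\rho_2 < C$; for $\rho_2 = C$, zero changes (hence no positive root) when $\rho_1 \geq C-1$ and exactly one otherwise; at most two sign changes if $\rho_2 > C$ and $\rho_1 < C-1$, and zero changes if $\rho_2 > C$ and $\rho_1 \in [C-1,C)$. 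Descartes' rule then provides the announced upper bounds on the number of positive roots.

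\textbf{Matching lower bounds and the envelope $\phi_C$.} I work with $\xi_{\rho_2}(u) := u - \rho_2 \Psi(u)$, so $\Phi_{\rho_1,\rho_2}(z) = 0$ corresponds to $\xi_{\rho_2}(u) = \rho_1$, and any such crossing automatically satisfies $u > \rho_1$ since $\Psi > 0$. From the setup, $\xi_{\rho_2}(0) = 0$, $\xi'_{\rho_2}(0) = 1 > 0$, and $\xi_{\rho_2}(u) \sim u(1-\rho_2/C) + \rho_2(C-1)/C$ at infinity. For $\rho_2 < C$, $\xi_{\rho_2} \to +\infty$, so IVT gives at least one crossing and Descartes forces exactly one; monotonicity of $z(\rho_1,\cdot)$ comes from differentiating $\Psi(\rho_1+\rho_2 z) = z$, since uniqueness at the root rules out $\xi'_{\rho_2} \leq 0$, forcing $\rho_2 \Psi'(u) < 1$ and $dz/d\rho_2 = \Psi'(u)z/(1-\rho_2\Psi'(u)) > 0$. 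For $\rho_2 = C$, the identity $\xi_C(u) = (C-1) - Cr(u)$ shows $\xi_C < C-1$ throughout but $\xi_C(u) \to C-1$, so a crossing with $\rho_1$ exists iff $\rho_1 < C-1$. For $\rho_2 > C$, $\xi_{\rho_2} \to -\infty$ and I define
\[
\phi_C(\rho_2) := \max_{u \geq 0}\xi_{\rho_2}(u),
\]
attained at some $u^*(\rho_2) > 0$. Using $\rho_2 > C$ and $\Psi(u) > 0$ for $u > 0$ yields $\xi_{\rho_2}(u) < u - C\Psi(u) = (C-1) - Cr(u) < C-1$, so $\phi_C(\rho_2) < C-1$; positivity follows from $\xi'_{\rho_2}(0) > 0$; strict monotonicity in $\rho_2$ from the pointwise strict monotonicity of $\xi_{\rho_2}(u)$ in $\rho_2$ at $u^* > 0$. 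For the boundary limits, as $\rho_2 \to \infty$ and any $\delta > 0$ fixed, $\xi_{\rho_2}(u) \leq \delta$ on $[0,\delta]$, while on $[\delta,+\infty)$ the upper bound $\xi_{\rho_2}(u) < u(1-\rho_2/C) + \rho_2(C-1)/C$ tends to $-\infty$ uniformly, giving $\phi_C(\rho_2) \to 0$; conversely, as $\rho_2 \searrow C$, picking $u_0$ with $r(u_0) < \varepsilon/C$ gives $\xi_{\rho_2}(u_0) \to (C-1) - Cr(u_0) > C-1-\varepsilon$, forcing $\phi_C(\rho_2) \to C-1$. Counting intersections of $\xi_{\rho_2}$ with the horizontal level $\rho_1$ closes the proof: two crossings when $\rho_1 < \phi_C(\rho_2)$ (at least two by the shape of $\xi_{\rho_2}$, exactly two by Descartes), a unique tangency when $\rho_1 = \phi_C(\rho_2)$, and none when $\rho_1 > \phi_C(\rho_2)$. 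The identification of $z(\rho_1,C)$ finally follows by continuity: the monotone limit of $z(\rho_1,\rho_2)$ as $\rho_2 \nearrow C$ must equal the unique root of $\Phi_{\rho_1,C}$ when $\rho_1 < C-1$ and diverge otherwise.

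\textbf{Main obstacle.} The delicate part is not the root counts themselves, which Descartes' rule settles quickly, but the qualitative analysis of the envelope $\phi_C$: the strict separation $\phi_C < C-1$ and the identification of its boundary behaviour. Both hinge on the positivity of the residual $r(u)$ and its decay at infinity, which is why the explicit expansion obtained in the setup is essential.
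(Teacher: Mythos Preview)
Your approach is genuinely different from the paper's and, in several respects, more streamlined. The paper works directly with $\Phi_{\rho_1,\rho_2}$, multiplies by $e^{-\rho_2 z}$, and computes the derivative explicitly to find that its sign is governed by a quadratic in $z$; this yields at most one critical point and hence the root counts. The paper then defines $\phi_C$ via $\phi_C(\rho_2)=\sup\{\rho_1:\min_z\Phi_{\rho_1,\rho_2}(z)<0\}$ and establishes its properties by monotonicity arguments in $\rho_1$. You instead change variables to $u=\rho_1+\rho_2 z$, use Descartes' rule on the polynomial $P_{\rho_1,\rho_2}$ for sharp upper bounds on the number of roots (counted with multiplicity), and define $\phi_C(\rho_2)=\max_u \xi_{\rho_2}(u)$ with $\xi_{\rho_2}(u)=u-\rho_2\Psi(u)$; the two definitions of $\phi_C$ coincide. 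Your route avoids the explicit derivative computation and makes the coefficient structure transparent, and the positivity of the residual $r(u)$ is a clean device that the paper reaches only implicitly through its asymptotic Relation~(2.13).

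There is one genuine gap. In the limit $\phi_C(\rho_2)\to 0$ as $\rho_2\to\infty$, you claim that on $[\delta,+\infty)$ the linear upper bound $u(1-\rho_2/C)+\rho_2(C-1)/C$ tends to $-\infty$ uniformly. This is false for small $\delta$: the supremum of that linear function over $u\ge\delta$ equals $\delta+\rho_2(C-1-\delta)/C$, which diverges to $+\infty$ whenever $\delta<C-1$, so your two-interval split does not yield $\phi_C(\rho_2)\le\delta$. The repair is easy with tools you already have: since $u/\Psi(u)=\sum_{k=0}^{C-1}C!/((C-k)!\,u^{k})$ is strictly decreasing, $\Psi(u)/u$ is increasing, so on $[\delta,\infty)$ one has $\Psi(u)\ge u\,\Psi(\delta)/\delta$ and hence $\xi_{\rho_2}(u)\le u\bigl(1-\rho_2\Psi(\delta)/\delta\bigr)\le 0$ as soon as $\rho_2>\delta/\Psi(\delta)$; combined with $\xi_{\rho_2}(u)\le u\le\delta$ on $[0,\delta]$ this gives $\phi_C(\rho_2)\le\delta$ for all large $\rho_2$. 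Everything else in your argument is correct.
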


\noindent
{\sc Remarks}
\begin{itemize}
\item The location of $\rho_2$ with respect to $C$ and of $\rho_1$ with respect to $\phi_C(\rho_2)$ determines the number of  solutions satisfying Equation~\eqref{PiRFix}.
The function $\phi_C$ can be  in fact (formally)  defined by
\begin{equation}\label{phim}
 \phi_C (\rho_2){\steq{def}}\sup\{\rho_1: m(\rho_1,\rho_2){<}0\}, \text{ with } m(\rho_1,\rho_2){\steq{def}}\min\{\Phi_{\rho_1,\rho_2}(z):z{\ge}0\}.
   \end{equation}
\item As it will be seen in Section~\ref{sec1-Sat}, when $\rho_2{>}C$ there is another, singular, equilibrium which is not mentioned in this proposition, it corresponds to $R{=}1$. Starting from some suitable initial states, the dynamical system~\eqref{DynSys} converges to the Dirac measure $\delta_{(0,C)}$, i.e.\ most of nodes are saturated.

  This situation  corresponds to the case where, in the limit, all requests are rerouted or rejected.  Mathematically, this is a consequence of the possibility that the dynamical system~\eqref{DynSys} may blow-up in finite time, i.e.\  may degenerate. 

\item Note that the case $C{=}1$ analyzed in Malyshev and Robert~\cite{Malyshev} does not exhibit multiple equilibria.
\end{itemize}
We will refer to the different cases of the proposition as, respectively, {\em underloaded},  if $\rho_2{<}C$, {\em critical}, if $\rho_2{=}C$ and {\em overloaded} if $\rho_2{>}C$.
\begin{proof}
Once the results to prove are properly formulated, the proofs of the statements are done via real analysis. First, note that
\begin{equation}\label{eqPhi1}
 \lim_{z\searrow 0}  z\Phi_{\rho_1,\rho_2}(z)=\frac{\rho_1^C}{C!}{>}0 \text{ and } \lim_{z\to+\infty} \frac{\Phi_{\rho_1,\rho_2}(z)}{z^{C-1}}=\frac{\rho_2^{C-1}}{(C{-}1)!}\left(\frac{\rho_2}{C}{-}1\right).
\end{equation}

\subsection*{The underloaded case $\rho_2{\in}(0,C)$}
Define, for $z{\ge}0$,
\[
f_{\rho_1,\rho_2}(z){=}\Phi_{\rho_1,\rho_2}(z)e^{-\rho_2  z}.
\]
After some simple calculations, with telescoping sums,  we get that 
\begin{equation}\label{eqPhi2}
f_{\rho_1,\rho_2}'(z)=  \frac{1}{z^2}\frac{(\rho_1{+}\rho_2 z)^{C-1}}{C!}e^{-\rho_2 z} \left(\rho_2(C{-}\rho_2) z^2+\rho_2(C{-}\rho_1{-}1)z-\rho_1\right).
\end{equation}
The last term of the right{-}hand side of the expression of $f_{\rho_1,\rho_2}'(z)$ is a polynomial of degree $2$. Its value at $z{=}0$ is negative and, since $\rho_2{<}C$, it is converging to ${+}\infty$ when $z$ gets large, hence there exists a unique $z_0{>}0$  such that $f_{\rho_1,\rho_2}'(z_0){=}0$. It necessarily corresponds to a unique extremum of $f_{\rho_1,\rho_2}$ on $\R_+$, a minimum, given the variations of $f_{\rho_1,\rho_2}$, see Relation~\eqref{eqPhi1}.  Hence,  the function $z{\to}f_{\rho_1,\rho_2}(z)$ is  decreasing on  $(0,z_0)$ and increasing on $[z_0,{+}\infty)$ and, since  $f_{\rho_1,\rho_2}$ is converging to $0$ at infinity, the relation $f_{\rho_1,\rho_2}(z){<}0$ holds for $z{\ge}z_0$. Thus, there exists a unique root $z(\rho_1,\rho_2)$ of $f_{\rho_1,\rho_2}$ in $\R_+$, located in $(0,z_0)$. We have therefore the equivalence of the two relations $z{<}z(\rho_1,\rho_2)$ and $\Phi_{\rho_1,\rho_2}(z){>}0$; the latter relation can be expressed as
      \[
z\sum_{m=0}^{C-1}\frac{C!}{m!} \frac{1}{(\rho_1{+}\rho_2 z)^{C-m}}<1.
      \]
      Hence, if $\rho_2'{>}\rho_2$, then $\Phi_{\rho_1,\rho_2'}(z){>}0$ holds if  $z{<}z(\rho_1,\rho_2)$. The function $\rho_2{\to}z(\rho_1,\rho_2)$ is therefore increasing.

\subsection*{The critical case $\rho_2{=}C$}
We easily get the relation
\begin{equation}\label{eqPhi3}
  \lim_{z\to+\infty} \frac{z^2\Phi_{\rho_1,C}(z)}{(\rho_1{+}C z)^C}C!=\frac{\rho_1{-}C{+}1}{C}.
\end{equation}
If $\rho_1{<}C{-}1$, then  $\Phi_{\rho_1,C}$ has negative values and, by Relation~\eqref{eqPhi1},  a root $z_C{>}0$. It is unique, otherwise it would imply that $f_{\rho_1,C}$ has two distinct extrema which is not possible by Relation~\eqref{eqPhi2}. The limit $z(\rho_1,C)$ is necessarily a root of $\Phi_{\rho_1,C}$, hence, $z(\rho_1,C){=}z_C$.

If $\rho_1{\ge}C{-}1$. Relation~\eqref{eqPhi2} gives  that $f_{\rho_1,C}$ is strictly decreasing. Since it is converging to $0$,   we conclude that $\Phi_{\rho_1,C}$ does not have a root in this case and thus, necessarily $z(\rho_1,C){=}{+}\infty$.

\subsection*{The overloaded case $\mathbf{\rho_2}{>}C$}
By Relation~\eqref{eqPhi1}, the function $\Phi_{\rho_1,\rho_2}$ converges to ${+}\infty$ at $0$ and at ${+}\infty$. To determine the number of roots of  $\Phi_{\rho_1,\rho_2}$,  one has therefore to obtain the sign of $m(\rho_1,\rho_2)$ defined by Equation~\eqref{phim}.

Clearly $\rho_1{\to}m(\rho_1,\rho_2)$ is a continuous function on $\R_+$. For $\rho_1^0{<}\rho_1^1$ and $z{>}\rho_1^1/\rho_2$,  the relation
 \[
\Phi_{\rho_1^0,\rho_2}(z{-}\rho_1^0/\rho_2) <\Phi_{\rho_1^1,\rho_2}(z{-}\rho_1^1/\rho_2)
 \]
gives $m_{\rho_1^0}{\le}m_{\rho_1^1}$, $\rho_1{\to}m(\rho_1,\rho_2)$ is also an increasing function.   Note that, for  $z{>}0$,
 \[
 \lim_{\rho_1\to 0} \Phi_{\rho_1,\rho_2}(\rho_1 z)=-1,
 \]
 hence if $\rho_1$ is sufficiently small then $m(\rho_1,\rho_2){<}0$. If $\rho_1{>}C{-}1$, then, from Relation~\eqref{eqPhi2} we get that the function $f_{\rho_1,\rho_2}$ is decreasing, and converging to $0$ at infinity. Consequently $\Phi_{\rho_1,\rho_2}$ is positive on $\R_+$ and therefore $m(\rho_1,\rho_2){>}0$. Hence,  $ \phi_C$ defined by Equation~\eqref{phim}
satisfies  $\phi_C(\rho_2){\in}(0,C{-}1)$. By continuity of $\rho_1{\to}m(\rho_1,\rho_2)$, we have $m_{\phi_C(\rho_2)}{=}0$, the function $\Phi_{\phi_C(\rho_2),\rho_2}$ has therefore a unique root. 

 By definition,  $\rho_1{>}\phi_C(\rho_2)$ if and only if $m(\rho_1,\rho_2){<}0$ or, equivalently, since the minimum of $\Phi_{\rho_1,\rho_2}$ is necessarily achieved in some compact interval of $(0,{+}\infty)$,  if the relation
 \[
 \sup_{z{>}0}\left(z\sum_{m=0}^{C-1}\frac{C!}{m!} \frac{1}{(\rho_1{+}\rho_2 z)^{C-m}}\right)<1
 \]
 holds. We deduce that  $\phi_C$ is a non-increasing function on $(C,{+}\infty)$ and that $\phi_C(\rho_2)$ converges to $0$ as $\rho_2$ gets large. 
 If
 \[
 \delta_C\steq{def}\lim_{\mathclap{\rho_2\searrow C}}\uparrow\phi_C(\rho_2)< C{-}1,
 \]
one can take $\rho_1{\in}(\delta_C,C{-}1 )$. From the critical case, we know that $\Phi_{\rho_1,C}$ has exactly one root and that there exists $z_1{>}0$ such that $\Phi_{\rho_1,C}(z_1){<}0$. One can fix  $\rho_2^1{>}C$  sufficiently close to $C$ so that $\Phi_{\rho_1,\rho_2}(z_1){<}0$. Hence, $\Phi_{\rho_1,\rho_2^1}$ has a root and, consequently, $\rho_1{\le}\phi_C(\rho_2^1)$ which contradicts the fact that $\rho_1{>}\delta_1{\ge}\phi_C(\rho_2^1)$. The proposition is proved. 
\end{proof}

\noindent
{\bf Examples.}
\begin{enumerate}
\item When $C{=}2$ and $\rho_1{<}2{<}\rho_2$, one has
  \[
  2\Phi_{\rho_1,\rho_2}(z)= \rho_2\left( \rho_2{-}2 \right) {z}^{2} {+} 2\left( \rho_1\rho_2{-}\rho_1{-}1\right) z{+} {\rho_1}^{2}.
  \]
 The function $\Phi(\rho_1,\rho_2)$ has two roots on $\R_+$ when $\Phi_{\rho_1,\rho_2}'(0){<}0$, that is, if $\rho_1{<}1/(\rho_2{-}1)$, and
if the minimum of $\Phi_{\rho_1,\rho_2}$  is negative, that is
  \[
  \rho_1^2+2(1{-}\rho_2)\rho_1+1>0.
  \]
  It is then easy to deduce that
  \[
  \phi_2(\rho_2)=\rho_2{-}1{-}\sqrt{\rho_2(\rho_2{-}2)}.
  \]
\item When $C{=}3$ and $\rho_1{<}3{<}\rho_2$, one has
  \[
  6\Phi_{\rho_1,\rho_2}(z){=}{\rho_2}^{2} \left(\rho_2{-}3 \right) {z}^{3}{+}3\rho_2 \left( \rho_1\rho_2{-}2\rho_1{-}2 \right) {z}^{2}
{+}3 \left( {\rho_1}^{2}\rho_2{-}{\rho_1}^{2}{-}2\rho_1{-}2 \right) z{+}{\rho_1}^{3}.
\]
The discriminant of this polynomial (in $z$) is
\[
H_{\rho_2}(u){\steq{def}}3{\rho_1}^{4}{+} 2\left(6{-}5\rho_2\right) {u}^{3}{+}3\left(3{\rho_2}^{2}{-}8(\rho_2{-}1)\right){u}^{2}{-}12\left(\rho_2{-}2\right) u
{-}8\rho_2{+}12.
\]
Since $H_{\rho_2}(0){=}{-}12(\rho_2{-}2){<}0$  holds and $H_{\rho_2}(C{-}1){=}4(9\rho_2{-}25))(\rho_2{-}3){>}0$   for $\rho_2{>}3$,  it is then easily seen that 
\[
\phi_3(\rho_2)=\inf\{u:H_{\rho_2}(u){=}0\}.
\]
\end{enumerate}
\medskip
\subsection{RIST Algorithm with One Retrial}\label{Ript1sec}
Proposition~\ref{PhiProp} shows that the dynamical system associated to the RIST with an infinite number of retrials has at most two equilibrium points. As noted earlier, there is another equilibrium which is not described here, it is investigated in Section~\ref{sec1-Sat}.

The purpose of this section is in showing that in the case when only one attempt to accommodate a request is allowed, there are also cases with three equilibrium points but all of them are equilibria of the ``smooth'' dynamical system. 

In view of Proposition~\ref{DynSysProp2} and in the same as the derivation of Relation~\eqref{Phi},  an equilibrium point is of the form $\pi_S$, with 
\begin{equation}\label{piS}
\pi_S(x,y)=\frac{1}{Z_S} \cdot \frac{\rho_1^x}{x!}\frac{(\rho_2 S)^y}{y!},\quad (x,y){\in}{\cal X},
\end{equation}
where $S$ is a solution $z{\in}(0,1)$ of
\begin{equation}\label{FixpiS}
\frac{(\rho_1{+}\rho_2 z)^C}{C!}-z\sum_{m=0}^{C} \frac{ (\rho_1{+}\rho_2 z)^m}{m!}=0,
\end{equation}
and $Z_S$ is the normalization constant.

With the change of coordinates $z{\mapsto}(z{-}\rho_1)/\rho_2$, this amounts to finding the roots $z{\in}(\rho_1,\rho_1{+}\rho_2)$ of $g$, with 
\[
g(z)\steq{def}\frac{\rho_2}{z{-}\rho_1}\frac{z^C}{C!}e^{-z}-\sum_{m=0}^{C} \frac{ z^m}{m!}e^{-z}.
\]
Note that $g(s){\to}{+}\infty$ when $s{\searrow}\rho_1$ and $g(\rho_1{+}\rho_2){<}0$. Simple calculations give the relation
\[
g'(z)=\frac{z^{C-1}}{C!}e^{-z}f(z),
\]
with $f(z){=}z^3{-}(2\rho_1{+}\rho_2) z^2{+} (\rho_1^2{+}\rho_1\rho_2{+}\rho_2(C{-}1)) z{-}C\rho_1\rho_2$. It shows in particular that Equation~\eqref{FixpiS} cannot have more than three solutions.

We give a scaling picture of the fixed point equation~\eqref{FixpiS}. As for the DAR algorithm which is investigated in the next section, we study the case when the capacity $C$ is a scaling parameter going to infinity and $\rho_2$ is of the order of $C$, i.e.\ $\rho_2{=}\nu_2 C$, for some $\nu_2{>}0$.

Under some condition, there is always a solution of Equation~\eqref{FixpiS} close to $0$. For that we do the change of variable $z{\mapsto}\rho_1z$, the relation becomes 
\[
\psi_{1,C}(z)\steq{def} -z+\rho_1^{C{-}1}\frac{({1+}\nu_2C z)^C}{C!}-z\sum_{m=1}^{C} \rho_1^m\frac{ (1{+}\nu_2C z)^m}{m!}=0.
\]
With Stirling's Formula, it easily seen that, for $\eps{>}0$,
\[
\limsup_{C\to{+}\infty} \psi_{1,C}(\eps)\leq -\eps+\lim_{C\to{+}\infty}\rho_1^{C{-}1}\frac{({1+}\nu_2C \eps)^C}{C!}=-\eps,
\]
provided that $\nu_2\rho_1{\geq}e$. Since $\psi_{1,C}(0){>}0$, we get that, for $C$ sufficiently large, Equation~\eqref{FixpiS} has a solution in the interval $(0,(\rho_1\eps){\wedge}1)$.

Returning to Equation~\eqref{FixpiS}, it can be written as 
\[
\psi_{2,C}(z)\steq{def}z\sum_{k=0}^{C}\frac{1}{(\rho_1{+}\nu_2C z)^k} \frac{C!}{(C{-}k)!}-1=0.
\]
Assuming that $\nu_2 z{>}1$, we can check that
\[
\lim_{C\to{+}\infty} \psi_{2,C}(z)=\Psi_{2,\infty}(z)\steq{def}\frac{\nu_2z^2{-}\nu_2z{+}1}{\nu_2z{-}1},
\]
and the convergence is uniform on any compact interval of $(1/\nu_2,1]$. If $\nu_2{>}4$, the function $\Psi_{2,\infty}$ has two zeroes in the interval $(1/\nu_2,1)$ given by
\[
z^*_{\pm}{\steq{def}}\frac{1\pm\sqrt{1{-}4/\nu_2}}{2}.
\]
We now summarize this result in the following proposition.
\begin{proposition}\label{prop1T}
  Under the assumption that $\rho_2{=}\nu_2C$ and if $\nu_2{>}\max(4,e/\rho_1)$, there exists $C_0{>}0$ such that if $C{\ge}C_0$,  the dynamical system~\eqref{DynSys2} associated to the RIST algorithm with one retrial  has exactly three equilibrium points converging respectively to
\[
0,\; \frac{1{-}\sqrt{1{-}4/\nu_2}}{2}, \; \frac{1{+}\sqrt{1{-}4/\nu_2}}{2},
\]
as $C$ goes to infinity.
\end{proposition}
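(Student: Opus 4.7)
The equilibrium points $\pi_S$ of~\eqref{DynSys2} with $p_0{=}1$ are parametrized by the solutions $S{\in}(0,1)$ of Equation~\eqref{FixpiS}. As already observed in the text, the factorization $g'(z){=}(z^{C-1}/C!)e^{-z}f(z)$ with $f$ cubic, combined with the boundary values $g(\rho_1{+}){=}{+}\infty$ and $g(\rho_1{+}\rho_2){<}0$, bounds the number of solutions of~\eqref{FixpiS} by three. It therefore suffices to exhibit three sequences of solutions $(S_C^{(0)})$, $(S_C^{(-)})$, $(S_C^{(+)})$ converging to the three distinct limits $0$, $z^*_-$ and $z^*_+$; their eventual pairwise distinctness then forces them to exhaust every equilibrium, giving ``exactly three''.

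For the root near $0$, I would rely on the reformulation $\psi_{1,C}$ introduced just before the statement. Under the hypothesis $\nu_2\rho_1{>}e$, Stirling's formula gives $\limsup_{C\to{+}\infty}\psi_{1,C}(\eps){\leq}{-}\eps$ for every $\eps{>}0$, while $\psi_{1,C}(0){=}\rho_1^{C{-}1}/C!{>}0$. The intermediate value theorem then produces, for each fixed $\eps{>}0$, a zero of $\psi_{1,C}$ in $(0,\eps)$ as soon as $C$ is large enough. Shrinking $\eps$ along a decreasing sequence and undoing the change of variable $z{\mapsto}\rho_1 z$ yields a root $S_C^{(0)}$ of~\eqref{FixpiS} with $S_C^{(0)}{\to}0$.

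For the two other roots I would use the reformulation $\psi_{2,C}$, which converges uniformly on compact subsets of $(1/\nu_2,1]$ to the rational function $\Psi_{2,\infty}(z){=}(\nu_2 z^2{-}\nu_2 z{+}1)/(\nu_2 z{-}1)$. Under $\nu_2{>}4$ the numerator has two distinct real roots $z^*_\pm{=}(1{\pm}\sqrt{1{-}4/\nu_2})/2$, both sitting in $(1/\nu_2,1)$ and both simple sign changes of $\Psi_{2,\infty}$. Choosing $\delta{>}0$ small enough that the intervals $[z^*_\pm{-}\delta,z^*_\pm{+}\delta]$ are disjoint subsets of $(1/\nu_2,1)$ on which $\Psi_{2,\infty}$ takes strictly opposite signs at the two endpoints, uniform convergence transfers that endpoint sign condition to $\psi_{2,C}$ for all $C$ large enough, and the intermediate value theorem delivers roots $S_C^{(\pm)}$ of $\psi_{2,C}$ in the respective intervals. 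Letting $\delta$ shrink to $0$ shows $S_C^{(\pm)}{\to}z^*_\pm$.

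Since the three limits $0{<}z^*_-{<}z^*_+$ are distinct, the sequences $(S_C^{(0)},S_C^{(-)},S_C^{(+)})$ are pairwise distinct for $C$ large and, by the a~priori upper bound of three solutions, they exhaust all equilibria. The analytic heavy lifting is entirely carried by the two asymptotic reformulations $\psi_{1,C}$ and $\psi_{2,C}$ already worked out just above the statement; what remains is routine packaging by intermediate value theorems. The most delicate point is the a~priori bound of three roots, since the cubic $f$ only controls critical points of $g$ and one needs to combine that with the boundary values of $g$ via a parity/Rolle argument in order to rule out a stray fourth equilibrium; without this bound, the construction above would only give ``at least three''.
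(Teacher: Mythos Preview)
Your proposal is correct and follows precisely the approach the paper lays out in the paragraphs preceding the proposition (the paper itself skips the formal proof, referring back to those arguments and forward to the analogous Theorem~\ref{ThNbFP}). You have in fact made explicit the one point the paper leaves implicit, namely that the cubic factorization of $g'$ together with the boundary values $g(\rho_1{+}){=}{+}\infty$, $g(\rho_1{+}\rho_2){<}0$ forces, via a Rolle/parity count, at most three roots rather than the naive four.
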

The proof is skipped since most of the arguments have been given and the proof of Theorem~\ref{ThNbFP} in the next section is similar and slightly more technical. 
\subsection{Stability of Saturation}\label{sec1-Sat}
\addcontentsline{toc}{section}{\thesubsection \hspace{3.5mm} Stability of Saturation}
In this section it is assumed that $\rho_2{>}C$. The main result is that if the initial state is sufficiently congested, so is the state of the network  on any finite time interval.

We describe the general strategy of our approach. Note first that the two-dimensional process of the total number of empty places and total number of class~$1$ customers  in the network does not have the Markov property. Indeed in state $(m,n)$ with $m{>}0$, when an external job arrives it is not possible to determine if the transition to $(m{-}1,n)$ or to $(m{-}1,n{+}1)$ occurs, i.e. if the job is blocked at its arrival queue or not.  This process  can in fact be compared with  an ergodic Markov process in $\N^2$ by using a  convenient coupling and a  specific order relation in $\N^2$.  The ergodicity property is then used to show that, asymptotically, the  total number of empty places and of class~$1$ customers in the network is negligible with respect to $N$, so that the entire system is composed of rerouted jobs in the limit. 

\medskip

If $(Z^N(t)){=}(X_i^N(t),Y_i^N(t))$ is the solution of the SDE~\eqref{SDEX} and~\eqref{SDEY}, define
\[
\overline{Z}_1^N(t){\steq{def}}\sum_{i=1}^N X_i^N(t),\quad  \overline{Z}_2^N(t){\steq{def}}\sum_{i=1}^N Y_i^N(t), 
\]
and $\overline{Z}_0^N(t){=}CN{-}\overline{Z}_1^N(t){-}\overline{Z}_2^N(t)$ is the total number of empty places at time $t$.

\medskip

\noindent
{\sc A Coupling with a Two-Dimensional Markov Process.}
We introduce a Markov process in $\N^2$ which will be used in the analysis of the asymptotic behavior of the  process of the empirical distribution associated to $(Z^N(t))$.
\begin{definition}\label{Qmat}
Let $(U^N(t){\steq{def}}(U_0^N(t),U_1^N(t))$ be the Markov process on $\N^2$ with the initial state $(\overline{Z}_0^N(0),\overline{Z}_1^N(0))$, and $Q$-matrix $Q{=}(q({\cdot},{\cdot}))$ defined on the state space ${\cal S}_U{=}\{u{=}(u_0,u_1){\in}\N^2:u_0{+}u_1{\leq} N\}$,
\[
\begin{cases}
q(u,u{-}e_0{+}e_1){=}\lambda u_0, \\
q(u,u{-}e_0){=}\lambda(N{-}u_0),\\
q(u,u{+}e_0{-}e_1){=}\mu_1u_1,\\
q(u,u{+}e_0){=}\mu_2(CN{-}u_0{-}u_1),
\end{cases}
\]
with $e_0{=}(1,0)$ and $e_1{=}(0,1)$, provided that the transitions keep the process in ${\cal S}_U$, and define
\[
U_2^N(t){\steq{def}}CN{-}U_0^N(t){-}U_1^N(t).
\]
\end{definition}
With some abuse of notation, we will also speak of the $Z$ and $U$-systems to refer to the associated stochastic processes  $(Z^N(t))$ and $(U^N(t))$ and, similarly of class~$1$ and~$2$ jobs in the $U$-system with an obvious meaning. Finally, 
\begin{equation}\label{SU}
S_U^N\steq{def}\inf\left\{t{\geq} 0: U_0^N(t){+}U_1^N(t){=}N\right\}
\end{equation}
and, for $a{\in}\N$,
\begin{equation}\label{TU}
T_{U}^a\steq{def}\inf\left\{t{\geq} 0: U_0^N(t){\geq}a\right\}.
\end{equation}

The transition rates of the process $(U^N(t))$ suggest that this process would behave as  $(Z^N(t))$ if all nodes had, at most, one empty place. The coupling shows that the process  $(Z^N(t))$ can be upper bounded in some way by such process. A key element in the coupling is the explicit use of the fact that services times of class~1 jobs are ``smaller'' than the services times of class~2 jobs.

\begin{proposition}\label{propCoup}
There exists a coupling of the processes  $(U_0^N(t),U_1^N(t))$ and $(Z^N(t))$ such that  $(U_0^N(0),U_1^N(0)){=}(\overline{Z}_0^N(0),\overline{Z}_1^N(0))$ and that the relations
\begin{equation}\label{eqc}
  \begin{cases}
U_2^N(t)\leq  \overline{Z}_2^N(t)\\
U_1^N(t){+}U_2^N(t)\leq   \overline{Z}_1^N(t){+}\overline{Z}_2^N(t)
  \end{cases}
  \end{equation}
hold for all $t{<}S_U^N$,
where $S_U^N$ is defined by Relation~\eqref{SU}.
\end{proposition}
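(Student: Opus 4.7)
My plan is to construct an explicit coupling on a common probability space carrying the marked Poisson input of Section~2.1 and to verify by induction on the jumps of the joint process that the two inequalities in \eqref{eqc} are preserved before $S_U^N$. Setting $D_0(t){=}U_0^N(t){-}\overline{Z}_0^N(t)$ and $D_2(t){=}\overline{Z}_2^N(t){-}U_2^N(t)$, the two inequalities amount to $D_0,D_2{\ge} 0$ for all $t{<}S_U^N$.

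I would drive $(Z^N(t))$ from the Poisson processes $\overline{\mathcal N}_\lambda^i$, $\mathcal N_{\mu_1}^{i,\ell}$ and $\mathcal N_{\mu_2}^{i,\ell}$ as in \eqref{SDEX}--\eqref{SDEY}, and build $(U^N(t))$ on the same probability space by thinning. At each arrival instant $t_n^i$ I would use the mark $U_{n,1}^i$ to decide the $U$-jump, arranging the splitting so that if node~$i$ is non-saturated in $Z$ the $U$-side does a U1-transition, and if node~$i$ is saturated and an actual re-routing takes place in $Z$ the $U$-side does a U2-transition whenever $U_0^N(t{-}){>}0$. The inductive bound $K{\steq{def}}|\{i{:}X_i^N{+}Y_i^N{<}C\}|{\le}\overline{Z}_0^N{\le} U_0^N$ makes this matching feasible while producing the correct marginal rates $\lambda U_0$ and $\lambda(N{-}U_0)$: the residual U1-mass $\lambda(U_0{-}K)$ is filled from arrivals at saturated $Z$-nodes using the slack $(U_0{-}K)/(N{-}K){\in}[0,1]$. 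For class-2 services I would maintain a running injection from the $U_2^N$ copies of the $U$-system into the $\overline{Z}_2^N$ class-2 jobs of the $Z$-system (well defined by $U_2{\le}\overline{Z}_2$), so that a U2-service is triggered precisely when a tagged $Z$-class-2 job departs via $\mathcal N_{\mu_2}^{i,\ell}$; this gives the correct rate $\mu_2 U_2$. Class-1 services on the $U$-side are generated from independent thinnings since no order between $\overline{Z}_1$ and $U_1$ is being claimed.

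The verification at jump times is then a finite case analysis: both $D_0$ and $D_2$ change by $\pm 1$ or $0$ at each transition, and one checks that whenever $D_0{=}0$ (resp.\ $D_2{=}0$) the coupling pairs a $U$-decrement with a simultaneous $Z$-decrement (resp.\ a U2-transition with a T2-transition), so that the gap is preserved. The hypothesis $\mu_1{\ge}\mu_2$ enters, as highlighted in the remark preceding the statement, when a class-1 service event in $U$ must be forced to precede the corresponding class-2 service in $Z$ via the standard coupling of exponentials with ordered rates; this is what allows a $U_1$-job to be associated with a $\overline{Z}_2$-job in the injection without inversion of departure times.

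The principal obstacle is the boundary configuration in which all $Z$-nodes are saturated ($\overline{Z}_0^N{=}0$) while $U_0^N{>}0$: a $Z$-arrival is then rejected while the corresponding $U$-arrival still decrements $U_0^N$ and may increment $U_2^N$ via U2, which would break $D_2{\ge}0$. From the conservation identity $U_0{+}U_1{+}U_2{=}CN{=}\overline{Z}_0{+}\overline{Z}_1{+}\overline{Z}_2$ one derives $D_2{=}D_0{+}(U_1^N{-}\overline{Z}_1^N)$, so that $\overline{Z}_0^N{=}0$ and $\overline{Z}_2{\ge}U_2$ already give $D_2{\ge}0$ with equality iff $U_0{+}U_1{=}\overline{Z}_1$. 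I would close the induction by strengthening the invariant to rule out a U2-transition in this configuration: in that case the coupling must reassign the arrival to a U1-jump, and the residual marginal rate check is passed because, before $S_U^N$, $U_0{+}U_1{<}N$ and $U_2{>}(C{-}1)N$ force enough slack in the mark partition. Establishing this reassignment rigorously, together with the bookkeeping of the injection across all jump types, is the central technical difficulty of the proof.
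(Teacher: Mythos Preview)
Your strategy (a jump-by-jump coupling with a case analysis on $D_0$ and $D_2$) is the same as the paper's, but the construction you sketch has a genuine gap at the class-1 services, and this gap propagates into your boundary discussion.

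You write that ``class-1 services on the $U$-side are generated from independent thinnings since no order between $\overline{Z}_1$ and $U_1$ is being claimed''. This is where the argument breaks. A $Z$-class-1 departure increments $\overline{Z}_0$ by~$1$; if nothing happens simultaneously on the $U$-side, $D_0$ drops by~$1$, and nothing in your scheme prevents this when $D_0{=}0$. Concretely, from the initial state $D_0{=}D_2{=}0$, $u_1{=}z_1$, the very first class-1 departure in $Z$ already violates $D_0{\ge}0$. Your later remark that ``a $U_1$-job [is] associated with a $\overline{Z}_2$-job in the injection'' contradicts the independence claim and does not repair the problem, because the danger comes from $\overline{Z}_1$-departures, not from $U_1$-departures.

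The paper's construction avoids this by always pairing the $\min(u_1,z_1)$ class-1 jobs of the two systems, and when $u_1{>}z_1$ the $u_1{-}z_1$ excess $U$-class-1 jobs are coupled to the same number of $Z$-class-2 jobs through a \emph{common} exponential $F$: the clock $F/\mu_1$ governs the $U$-class-1 departure and $F/\mu_2$ the $Z$-class-2 departure, and the inequality $\mu_2{<}\mu_1$ forces the $U$-side to fire first. This is exactly where the hypothesis $\mu_2{<}\mu_1$ enters, and it is not an afterthought: without it one cannot control $D_0$ on the service side. A by-product of this pairing is that the stronger invariant $U_1^N{\ge}\overline{Z}_1^N$ is maintained throughout (no transition ever takes $u_1{-}z_1$ below $0$), and via $D_2{=}D_0{+}(U_1^N{-}\overline{Z}_1^N){\ge}D_0$ this immediately dissolves your ``principal obstacle'': when $\overline{Z}_0{=}0$ one has $D_0{=}U_0^N{\ge}1$, hence $D_2{\ge}1$, and the U2-increment is safe. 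Your proposed fix of ``reassigning the arrival to a U1-jump'' is not viable, since it alters the $U$-marginal rates; the claim that $U_2{>}(C{-}1)N$ provides slack is unjustified.
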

\begin{proof}
  We proceed by induction on the number of jumps. One has  to show that if the relation holds initially then it will also hold at the first jump of    $(Z^N(t))$ or  $(U_0^N(t),U_1^N(t))$.

For $j{\in}\{0,1\}$, define  $z_j{\steq{def}}\overline{Z}_j^N(0)$ and  $u_j{\steq{def}}U_j^N(0)$. By assumption,
\[
\begin{cases}
u_2\le z_2\\
u_1{+}u_2\le z_1{+}z_2,
\end{cases}
\]
we can assume that $u_0{<}N$ since the process is stopped at time $S_U^N$. We define
\[
\widetilde{a}_0=\sum_{i=1}^N \ind{X_{i}^N(0)+Y_i^N(0)<C},
\]
the number of non-saturated queues.  Clearly,
\[
\widetilde{a}_0\le z_0{=}CN{-}z_1{-}z_2\le CN{-}u_1{-}u_2=u_0.
\]
We will take the convention that $E_\xi$ denotes an exponentially distributed random variable with parameter $\xi{\ge}0$ and that all exponential random variables constructed are independent. The coupling is done by introducing the following random variables; the minimum of them will define the first jump of the process. For each random variable, the transition is indicated for $(\overline{Z}^N(t)){=}(\overline{Z}_0^N(t),\overline{Z}_1^N(t))$ and $(U^N(t))$  in the case it has the minimal value. 

\medskip
\begin{enumerate}
\item  Arrivals.
    \begin{enumerate}
    \item  $E_{\lambda \widetilde{a}_0}$ is the minimum of the arrivals of jobs finding a non-congested queue in the $Z$-system;\\
      {\tt Transition:} $z{\mapsto}z{+}e_1{-}e_0$ and $u{\mapsto} u{+}e_1{-}e_0$.
\item  $E_{\lambda ({u}_0-\widetilde{a}_0))}$ is the minimum of the remaining arrivals of jobs finding a non-congested queue in the $U$-system;\\
      {\tt Transition:} $u{\mapsto} u{+}e_1{-}e_0$ and,  if $\widetilde{a}_0{>}0$, $z{\mapsto}z{-}e_0$.
\item  If $u_0{>}0$. $E_{\lambda (N-u_0)}$ is  the minimum of the arrivals of jobs finding a congested queue in the $Z$-system and the $U$-system.\\
      {\tt Transition:} $u{\mapsto}u{-}e_0$ and, if $\widetilde{a}_0{>}0$, $z{\mapsto} z{-}e_0$.
    \end{enumerate}
\item Services

  \begin{enumerate}
    \item   $E_{\mu_2 u_2}$ is the minimum of the services of  $u_2$ class~$2$ jobs of the $U$-system and $Z$-system;\\
      {\tt Transition:} $z{\mapsto}z{+}e_0$ and $u{\mapsto} u{+}e_0$. Recall that $u_2{\le}z_2$.
\item If $u_1{\le}z_1$.
    \begin{enumerate}
      \item  $E_{\mu_1 u_1}$ is the minimum of the services of  $u_1$ class~$1$ jobs of the $U$-system and $Z$-system;\\
      {\tt Transition:} $z{\mapsto}z{+}e_0{-}e_1$ and $u{\mapsto} u{+}e_0{-}e_1$.

      \item $E_{\mu_1 (z_1-u_1)}$ is the minimum of the remaining services of the $Z$-system. In this case, note that one has necessarily $z_1{>}u_1$;\\
      {\tt Transition:} $z{\mapsto}z{+}e_0{-}e_1$ and $u{\mapsto} u$.

      \item   $E_{\mu_2 (z_2-u_2)}$ is the minimum of the services of  the remaining $z_2-u_2$ class~$2$ jobs of $Z$-system. In this case $z_2{>}u_2$;\\
      {\tt Transition:} $z{\mapsto}z{+}e_0$ and $u{\mapsto} u$.

    \end{enumerate}
\item If $u_1{>}z_1$. We fix $F{=}E_{u_1{-}z_1}$.
  \begin{enumerate}
\item   $E_{\mu_1z_1}$ is the minimum of the services of  $z_1$ class~$1$ jobs of the $U$-system and $Z$-system;\\
      {\tt Transition:} $z{\mapsto}z{+}e_0{-}e_1$ and $u{\mapsto} u{+}e_0{-}e_1$.

\item   $F/\mu_1$ is the minimum of the services of  $u_1{-}z_1$ remaining class~$1$ jobs of the $U$-system;\\
      {\tt Transition:} $z{\mapsto}z$ and $u{\mapsto} u{+}e_0{-}e_1$.

\item $F/\mu_2$ is the minimum of services of some $u_1{-}z_1$  class~$2$ jobs of the $Z$-system\\
      {\tt Transition:} cannot be the next step  since $F/\mu_1{<}F/\mu_2$, the transition of (2)~(c)~(ii) occurs therefore before.

\item $E_{\mu_2(z_2{-}u_2{-}(u_1{-}z_1)}$ is the minimum of services of the remaining $\mu_2(z_2{-}u_2{-}(u_1{-}z_1)$  class~$2$ jobs of the $Z$-system.  In this case,  $u_1{+}u_2{<}z_1{+}z_2$;\\
      {\tt Transition:} $z{\mapsto}z{+}e_0$ and $u{\mapsto} u$.
\end{enumerate}
\end{enumerate}
\end{enumerate}
An easy, but somewhat tedious, check  gives that the two processes have the correct time evolution and, furthermore that the order relation is preserved after any of the transitions mentioned above. 
\end{proof}

\medskip

\noindent
{\sc An Asymptotic Analysis of $(U^N(t))$.}
It is assumed  that the initial state of the process $(U^N(t))$ of Definition~\eqref{Qmat} satisfies the relation
\begin{equation}\label{InitU}
\lim_{N\to+\infty} \frac{1}{N}\left(U_0^N(0),U_1^N(0)\right)=(a_0,a_1).
\end{equation}
\begin{proposition}\label{AsympU}
Under the  condition  $\rho_1{<}C{<}\rho_2$, there exists $\eta_0{>}0$  such that, if the initial conditions~\eqref{InitU} satisfies the relations $0{\le}a_0{+}a_1{\le}\eta_0$ then for any $\eps{>}0$, there is  $t_0{>}0$ and  a constant $K_0$ such that,  for any $T{>}0$,
\[
\lim_{N\to+\infty}\P\left(\sup_{t_0{\le} t{\le} t_0{+}T}U_0^N(t){\leq} K_0\log N, \sup_{t_0{\le} t{\le} t_0{+}T}\frac{U_1^N(t)}{N}\leq \eps,\quad S_U^N{\geq}t_0{+}T\right)=1.
\]
\end{proposition}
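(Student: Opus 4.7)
The plan is a two-stage argument: first, a mean-field (Lyapunov) analysis pushes the normalized pair $(U_0^N/N,U_1^N/N)$ into an arbitrarily small neighborhood of the origin by some finite deterministic time $t_0$; second, a stochastic-domination argument combined with a bootstrap delivers the refined bounds $U_0^N\le K_0\log N$ and $U_1^N\le \eps N$ throughout $[t_0,t_0{+}T]$.

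\textbf{Descent to $o(N)$.} A direct computation from the $Q$-matrix of Definition~\ref{Qmat} gives, in the interior $\{u_0>0\}$, the conditional drift
\[
\E\!\left[d(U_0^N{+}\alpha U_1^N)/dt\mid {\cal F}_t\right]=(\mu_2C{-}\lambda)N+(\alpha\lambda{-}\mu_2)U_0^N+(\mu_1{-}\mu_2{-}\alpha\mu_1)U_1^N.
\]
Under $\rho_1<C<\rho_2$ one has $\rho_2>C\ge 1>\rho_1/(1{+}\rho_1)$, so one can pick $\alpha\in[1{-}\mu_2/\mu_1,\mu_2/\lambda]$: the coefficients of $U_0^N$ and $U_1^N$ are then non-positive and the drift is bounded by $(\mu_2C{-}\lambda)N<0$. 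Starting from $V^N(0){:=}U_0^N(0){+}\alpha U_1^N(0)\le(1{+}\alpha)\eta_0 N$, a standard Foster--Lyapunov/martingale argument shows that $V^N(t_0)/N\to 0$ in probability for some deterministic $t_0{=}t_0(\eta_0)$; equivalently, $U_0^N(t_0)/N\to 0$ and $U_1^N(t_0)/N\to 0$ in probability.

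\textbf{Birth-death domination of $U_0^N$.} The key structural feature of Definition~\ref{Qmat} is that the total rate of downward jumps of $U_0^N$ is the \emph{constant} $\lambda u_0{+}\lambda(N{-}u_0)=\lambda N$, while the total upward rate $\mu_1U_1^N{+}\mu_2(CN{-}U_0^N{-}U_1^N)$ is bounded above by $\alpha_\delta N:=(\mu_2C{+}(\mu_1{-}\mu_2)\delta)N$ on the event $\{U_1^N\le\delta N\}$. For $\delta$ small enough, $\alpha_\delta<\lambda$. Let $(X^N(t))$ be the reflected birth-death chain on $\N$ with birth rate $\alpha_\delta N$ and death rate $\lambda N$, started from $X^N(t_0)=U_0^N(t_0)$; a standard pathwise coupling yields $U_0^N(t)\le X^N(t)$ for $t\in[t_0,\tau^N]$ with $\tau^N:=\inf\{t\ge t_0:U_1^N(t)>\delta N\}$. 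The invariant law of $X^N$ is geometric of parameter $p=\alpha_\delta/\lambda<1$, and a discretization of $[t_0,t_0{+}T]$ together with exponential Markov-type tail bounds gives $\P(\sup_{t_0\le t\le t_0+T}X^N(t)>K_0\log N)\to 0$ for any $K_0$ with $K_0\log(1/p)>2$.

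\textbf{Bootstrap and conclusion.} On the event $\{U_0^N(t)\le K_0\log N\text{ for all }t\in[t_0,t_0+T]\}$, the semimartingale decomposition of $U_1^N$ has drift bounded by $\lambda K_0\log N-\mu_1U_1^N$ and a martingale part whose predictable quadratic variation is of order $NT$, so that its supremum is of order $o_P(N)$ by Doob's $L^2$ inequality. Gronwall's lemma then gives
\[
\sup_{t_0\le t\le t_0+T}U_1^N(t)\le U_1^N(t_0)+\frac{\lambda K_0}{\mu_1}\log N+o_P(N),
\]
which is $\le\eps N$ for large $N$ since $U_1^N(t_0)=o_P(N)$ by Step~1. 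In particular $\tau^N\ge t_0+T$ with probability tending to one, making the domination of Step~2 unconditional, and $S_U^N\ge t_0+T$ follows from $U_0^N+U_1^N\le K_0\log N+\eps N<N$. The main obstacle is the circularity between the two estimates---the $\log N$ bound on $U_0^N$ is valid only while $U_1^N\le\delta N$, and conversely the control of $U_1^N$ uses the $\log N$ bound on $U_0^N$---which is resolved by the stopping-time bootstrap $\tau^N$; its success hinges on the martingale fluctuations of $U_1^N$ being $o_P(N)$, and it is this that forces $\eta_0$ (and $\delta$) to be chosen sufficiently small at the outset.
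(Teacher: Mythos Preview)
Your overall architecture---fluid descent, birth--death domination for $U_0^N$, drift/martingale control of $U_1^N$, closed by a stopping-time bootstrap---is sound and close in spirit to the paper's proof. But Step~1 contains a genuine error.

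You assert that under $\rho_1<C<\rho_2$ one can pick $\alpha\in[1-\mu_2/\mu_1,\,\mu_2/\lambda]$. This interval is non-empty if and only if $\rho_2-\rho_1\le 1$, which is \emph{not} a consequence of the hypotheses: take for instance $\rho_1=1$, $C=2$, $\rho_2=5$. Your chain ``$\rho_2>C\ge 1>\rho_1/(1+\rho_1)$'' does not yield $\rho_2-\rho_1\le 1$. Hence the claim that the drift of $V^N=U_0^N+\alpha U_1^N$ is globally bounded by $(\mu_2C-\lambda)N$ fails.

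The Lyapunov idea is salvageable, but only locally: with $\alpha=1$ the drift of $U_0^N+U_1^N$ on $\{u_0>0\}$ is $(\mu_2C-\lambda)N+(\lambda-\mu_2)u_0-\mu_2 u_1$, and on the region $\{u_0+u_1\le\eta N\}$ this is at most $\bigl(\mu_2C-\lambda+(\lambda-\mu_2)\eta\bigr)N<0$ for $\eta$ small enough. Doing it this way forces you to run the stopping-time bootstrap already at the descent stage (to guarantee the process does not leave $\{u_0+u_1\le\eta N\}$), not only in Step~3.

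The paper avoids this by splitting the coupling rather than seeking a single Lyapunov weight: while $U_0^N\le\eta N$, the process $U_1^N$ is dominated by an $M/M/\infty$ queue with input $\lambda\eta N$ and service $\mu_1$, whose fluid limit tends to $\rho_1\eta$; simultaneously $U_0^N$ is dominated by an $M/M/1$ on the $N$-accelerated time scale. The two transparent constraints $(\mu_1-\mu_2)\eta_0+\mu_2C<\lambda$ and $\rho_1\eta_0<1$ then determine $\eta_0$, with no compatibility issue. One further minor gap: in your Step~2 you start $X^N$ from $U_0^N(t_0)=o_P(N)$ and jump directly to $\sup X^N\le K_0\log N$; you still need the intermediate step that the dominating $M/M/1$ hits $0$ by a deterministic time with probability tending to $1$, after which the $\log N$ bound follows from hitting-time asymptotics. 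The paper makes this explicit via the stopping time $T_U^0$.
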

\begin{proof}
  For $0{<}a_0{+}a_1{<}\eta{<}1$, by Definition~\ref{Qmat} of the process $(U^N(t))$,   a simple coupling shows that the process $(U_1^N(t{\wedge}S_U^N{\wedge}T_U^{\eta N}))$ can be stochastically  upper bounded by $(L_\eta^N(t{\wedge}S_U^N{\wedge}T_U^{\eta N}))$, where $(L_\eta^N(t))$  is the process of the number of jobs of an $M/M/\infty$ queue with arrival rate $\lambda\eta N$ and service rate $\mu_1$ and initial point $U_1^N(0)$, and $T_U^{\eta N}$ is defined by Relation~\eqref{TU}. A classical result, see Theorem~6.13 of Robert~\cite{Robert} for example,  gives the following convergence in distribution
\begin{equation}\label{eq1-2}
\lim_{N\to+\infty} \left(\frac{L_\eta^N(t)}{N}\right)=\left(\frac{\lambda}{\mu_1}\eta{+}\left(a_1{-}\frac{\lambda}{\mu_1}\eta\right)e^{-\mu_1t}\right).
\end{equation}
With a similar argument, the process $(U_0^N(t{\wedge}S_U^N{\wedge}T_U^{\eta N}))$ can be upper bounded by $(Q((Nt){\wedge}S_U^N{\wedge}T_U^{\eta N}))$, where $(Q(t))$ is the process of the number of jobs of  an $M/M/1$ queue with respective arrival and service rates $\mu_2C{+}\eta(\mu_1{-}\mu_2)$ and $\lambda$, and  initial point $U_0^N(0)$. Again a classical result, see Proposition~5.16 of Robert~\cite{Robert} for example,  gives the following convergence in distribution
\begin{equation}\label{eq3}
\lim_{N\to+\infty}\left(\frac{Q(Nt)}{N}\right)=\left(a_0+\left((\mu_1{-}\mu_2)\eta{+}\mu_2C-\lambda\right)t\right)^+,
\end{equation}
where $a^+{=}\max(a,0)$ for $a{\in}\R$.

Now, we fix $0{<}\eta_0{<}1$ such that 
\begin{equation}\label{eq5}
(\mu_1{-}\mu_2)\eta_0{+}\mu_2C{<}\lambda \text{ {\rm and } } \eta_0\frac{\lambda}{\mu_1}{<}1.
\end{equation}
If $(a_0,a_1)$ is such that
\[
a_0{<}\eta_0{\wedge}(1{-}\lambda\eta_0/\mu) \text{ {\rm and } } a_1{<}{\lambda}\eta_0/{\mu_1},
\]
then, by using Relations~\eqref{eq1-2} and~\eqref{eq3}, we get that
\[
\lim_{N\to+\infty}  \P\left(\sup_{0\le s\leq t}\frac{L_{\eta_0}^N(s)}{N}<\eta_0, \sup_{0\le s\leq t}\frac{1}{N}(L_{\eta_0}^N(s)+Q(Ns))< 1\right)=1.
\]
This implies,  in particular,  the relation
\begin{equation}\label{eq4}
\lim_{N\to+\infty} \P\left(\min\left(S_U^N,T_U^{\eta_0N}\right){>}t\right)=1
\end{equation}
for all $t{\ge}0$. Therefore, Relations~\eqref{eq1-2} and~\eqref{eq3} hold with $(L_{\eta_0}^N(t))$ [resp. $(Q(Nt))$] replaced by $(U_1^N(t))$ [resp. $(U_0^N(t))$].
Additionally, Relation~\eqref{eq3} shows that, 
\[
\lim_{N\to+\infty}  \P\left(T_U^0\leq \frac{\eta_0}{\lambda{-}(\mu_1{-}\mu_2)\eta_0{+}\mu_2C}\right)=1.
\]

With the same coupling as before and the strong Markov property, the process $(U_0^N(T_U^0{+}t), 0{\le}t{\le}T)$ is upper bounded by $(Q(Nt),0{\le}t{\le}T)$, where $(Q(t))$ is the same $M/M/1$ process as before but starting at $0$, $Q(0){=}0$. Let
\[
H_b\steq{def}\inf\{t{\ge}0: Q(t)=b\},
\]
Proposition~5.11 of Robert~\cite{Robert} shows that, if $\rho{\steq{def}}(\mu_1{-}\mu_2)\eta_0{+}\mu_2C)/\lambda{<}1$, then, as $b$ goes to infinity, the sequence of random variables $(\rho^bH_b)$ converges in distribution to an exponential distribution.  Define
\[
{\cal A}_N{\steq{def}}\left\{\sup_{T_U^0\le s\leq T_U^0+T} U_0^N(s) \leq C\log N\right\},
\]
by choosing $C{>}{-}\log\rho$, one has therefore the relation
\[
\liminf_{N\to+\infty}\P\left({\cal A}_N\right)\geq \lim_{N\to+\infty}\P\left(\sup_{0\le s\leq NT} Q(s) \leq C\log N\right)=1. 
\]
The proposition is proved.
\end{proof}
Propositions~\ref{propCoup} and~\ref{AsympU} give the following proposition. 
\begin{proposition}[Stability of Saturation]\label{theo1}
Under the  condition  $\rho_1{<} C{<}\rho_2$, there exists some $\eta_0{>}0$ and $t_0{>}0$ such that, if the initial condition is such that 
  \begin{equation}\label{Init}
\liminf_{N\to{+}\infty} \frac{1}{N}\sum_{i=1}^N Y_{i}^N(0) \geq C{-}\eta_0,
  \end{equation}
  then, for any $\eps{>}0$ and $T{\ge}0$,
  \[
  \lim_{N\to+\infty} \P\left(\inf_{t_0\leq s\leq t_0{+}T} \frac{1}{N}\sum_{i=1}^N Y_i^N(s) \geq C{-}\eps\right)=1.
  \]
\end{proposition}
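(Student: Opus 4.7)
The plan is to combine the coupling of Proposition~\ref{propCoup} with the asymptotic analysis of the two-dimensional Markov process $(U^N(t))$ provided by Proposition~\ref{AsympU}, together with the capacity identity $\overline{Z}_0^N(t){+}\overline{Z}_1^N(t){+}\overline{Z}_2^N(t){=}CN$.

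First, I would rephrase the hypothesis~\eqref{Init}: since $\overline{Z}_0^N(0){+}\overline{Z}_1^N(0){=}CN{-}\overline{Z}_2^N(0)$, it is equivalent to
\[
\limsup_{N\to+\infty} \frac{\overline{Z}_0^N(0){+}\overline{Z}_1^N(0)}{N} \leq \eta_0.
\]
I would take $\eta_0$ strictly smaller than the constant furnished by Proposition~\ref{AsympU}; since by construction the initial state of $(U^N)$ coincides with $(\overline{Z}_0^N(0),\overline{Z}_1^N(0))$ in the coupling, the hypothesis of Proposition~\ref{AsympU} is met. The minor discrepancy that~\eqref{Init} provides only a one-sided bound while Proposition~\ref{AsympU} assumes an actual limit of $(U_0^N(0),U_1^N(0))/N$ is handled by a standard subsequence argument, or by stochastically enlarging the initial state to a deterministic convergent sequence.

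Next, fix $\eps{>}0$ and apply Proposition~\ref{AsympU} with $\eps/2$ in place of $\eps$. This produces $t_0{>}0$ and $K_0{>}0$ such that, with probability tending to one as $N{\to}{+}\infty$,
\[
\sup_{t_0\le t\le t_0+T} U_0^N(t) \leq K_0\log N, \quad \sup_{t_0\le t\le t_0+T} \frac{U_1^N(t)}{N} \leq \frac{\eps}{2}, \quad S_U^N \geq t_0{+}T.
\]
On this event, using the definition $U_2^N(t){=}CN{-}U_0^N(t){-}U_1^N(t)$, one gets $U_2^N(t)/N \geq C{-}\eps/2{-}K_0(\log N)/N$, which is at least $C{-}\eps$ for $N$ large enough. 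Simultaneously, since $\{S_U^N{\ge}t_0{+}T\}$ holds, the coupling inequality~\eqref{eqc} gives $U_2^N(t){\le}\overline{Z}_2^N(t)$ for every $t{\in}[t_0,t_0{+}T]$, whence
\[
\frac{1}{N}\sum_{i=1}^N Y_i^N(t) = \frac{\overline{Z}_2^N(t)}{N} \geq C{-}\eps
\]
uniformly on $[t_0,t_0{+}T]$, which is the claim.

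The main obstacles have already been dealt with in the two preceding propositions: constructing the coupling that exploits the ordering $\mu_2{<}\mu_1$ on service rates so that a two-dimensional Markov process dominates the relevant coordinates of $(Z^N(t))$, and then bounding $(U_1^N(t))$ above by an $M/M/\infty$ queue and $(U_0^N(t))$ above by a time-accelerated, subcritical $M/M/1$ queue using $\rho_1{<}C{<}\rho_2$. Given these inputs, the present proposition reduces to a reindexing via $\overline{Z}_2^N{=}CN{-}\overline{Z}_0^N{-}\overline{Z}_1^N$ and the coupling bound $U_2^N{\le}\overline{Z}_2^N$; the only real care to take is in calibrating $\eta_0$ so that both the threshold of Proposition~\ref{AsympU} and the one-sided nature of~\eqref{Init} are accommodated.
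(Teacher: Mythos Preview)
Your proposal is correct and follows essentially the same route as the paper: both invoke Proposition~\ref{propCoup} to couple $(\overline{Z}_0^N,\overline{Z}_1^N)$ with $(U_0^N,U_1^N)$, apply Proposition~\ref{AsympU} to control $U_0^N$ and $U_1^N$ on $[t_0,t_0{+}T]\cap\{S_U^N{\ge}t_0{+}T\}$, and then read off the lower bound on $\overline{Z}_2^N/N$ via $U_2^N{\le}\overline{Z}_2^N$ and the capacity identity. The paper handles the $\liminf$ in~\eqref{Init} by passing to a subsequence along which $(\overline{Z}_0^N(0)/N,\overline{Z}_1^N(0)/N)$ converges, exactly as you suggest.
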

\begin{proof}
With the above notations,
\[
\overline{Z}_1^N(t){\steq{def}}\sum_{i=1}^N X_i^N(t),\quad  \overline{Z}_2^N(t){\steq{def}}\sum_{i=1}^N Y_i^N(t), 
\]
and $\overline{Z}_0^N(t){=}CN{-}\overline{Z}_1^N(t){-}\overline{Z}_2^N(t)$. Without loss of generality, by taking a subsequence for example, we can assume that
\[
\lim_{N\to{+}\infty} \frac{\overline{Z}_1^N(0)}{N}=a_1\text{  and  }\lim_{N\to{+}\infty} \frac{\overline{Z}_0^N(0)}{N}=a_0,
\]
for $a_0$, $a_1{\in}[0,C]$. Proposition~\ref{propCoup} gives a coupling of  the process $(\overline{Z}_0^N(t),\overline{Z}_1^N(t))$ with the process $(U_0^N(t),U_1^N(t))$, with the same initial conditions, such that the relation $U_2^N(t){\le}\overline{Z}_2^N(t)$ holds for $t{<}S_U^N$.

Proposition~\ref{AsympU} shows that there exists $\eta_0{>}0$ and $t_0{\ge}0$ such that if Relation~\eqref{Init} holds then $a_0{+}a_1{\le}\eta_0$, so that, for $T{>}0$,
\[
\lim_{N\to+\infty}\P\left(\sup_{t_0{\le} t{\le} t_0{+}T}\frac{U_0^N(t)}{N}{\leq} \eps, \sup_{t_0{\le} t{\le} t_0{+}T}\frac{U_1^N(t)}{N}\leq \eps,\, S_U^N{\geq}t_0{+}T\right)=1.
\]
With the coupling, we obtain the relation
\[
\lim_{N\to+\infty}\P\left(\inf_{t_0{\le} t{\le} t_0{+}T}\frac{\overline{Z}_2^N(t)}{N}{\geq} C{-}2\eps\right)=1.
\]
The proposition is proved.
  \end{proof}
The next result shows a stability property of the saturated state $(0,C)$ of the mean-field limit of the process $(X^N(t))$,
\begin{corollary}
Under the assumptions of Proposition~\ref{theo1}, any limiting point $(\Lambda(t))$ of the sequence of empirical processes $(\Lambda^N(t))$ of Relation~\eqref{Emp} satisfies the following relation for the convergence in distribution, 
  \[
  \Lambda(t)((0,C))>1{-}\eps, \,\forall t{\ge}t_0.
    \]
\end{corollary}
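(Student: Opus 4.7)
The plan is to deduce the corollary directly from Proposition~\ref{theo1} via an elementary averaging argument that exploits the capacity constraint $x{+}y{\le}C$, followed by a routine passage to the limit using the fact that the evaluation map $\zeta{\mapsto}\zeta(\{(0,C)\})$ is continuous on the (compact) finite-state simplex ${\cal P}({\cal X})$.

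The first step is purely deterministic. Since $(X_i^N(t),Y_i^N(t)){\in}{\cal X}$, the equality $Y_i^N(t){=}C$ already forces $X_i^N(t){=}0$, so
\[
\Lambda^N(t)(\{(0,C)\})=\frac{1}{N}\#\{i: Y_i^N(t){=}C\}.
\]
Using $Y_i^N(t){\le}C$ and separating the nodes $i$ with $Y_i^N(t){=}C$ from those with $Y_i^N(t){\le}C{-}1$, one obtains the pointwise bound
\[
C-\frac{1}{N}\sum_{i=1}^N Y_i^N(t)\;\ge\;\frac{1}{N}\#\{i:Y_i^N(t){\le}C{-}1\}\;=\;1-\Lambda^N(t)(\{(0,C)\}).
\]
In particular, $\Lambda^N(t)(\{(0,C)\}){\ge}1-\bigl(C-\frac{1}{N}\sum_i Y_i^N(t)\bigr)$.

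The second step is to invoke Proposition~\ref{theo1} applied with $\eps/2$ in place of $\eps$: under the initial condition~\eqref{Init} there exist $\eta_0{>}0$ and $t_0{>}0$ such that, for any fixed $T{>}0$,
\[
\lim_{N\to+\infty}\P\left(\inf_{t_0\le s\le t_0+T}\Lambda^N(s)(\{(0,C)\})\ge 1{-}\eps/2\right)=1.
\]

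Finally, I would lift this to an arbitrary limit point $(\Lambda(t))$, which exists by the tightness of $(\Lambda^N(t))$ already established in the proof of Proposition~\ref{DynSysProp} (the state space ${\cal P}({\cal X})$ being compact). Because ${\cal X}$ is finite, the functional $F{:}\zeta{\mapsto}\zeta(\{(0,C)\})$ is continuous and bounded on ${\cal P}({\cal X})$, so for each continuity point $t{\ge}t_0$ of $(\Lambda(\cdot))$ the portmanteau theorem applied to the closed set $\{\zeta: F(\zeta){\ge}1{-}\eps/2\}$ gives
\[
\P\left(\Lambda(t)(\{(0,C)\})> 1{-}\eps\right)\ge\P\left(\Lambda(t)(\{(0,C)\})\ge 1{-}\eps/2\right)\ge\limsup_{N\to+\infty}\P\left(\Lambda^N(t)(\{(0,C)\})\ge 1{-}\eps/2\right)=1.
\]
Extension from the dense set of continuity points to all $t{\ge}t_0$ is then obtained by c\`adl\`ag regularity of $(\Lambda(\cdot))$ via a right limit. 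The only point requiring any care is this last limit passage, but the compactness and discreteness of ${\cal X}$ trivialize the usual weak-topology subtleties, so the main content of the corollary really lies in Proposition~\ref{theo1} together with the averaging bound above.
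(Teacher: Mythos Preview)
Your proof is correct and follows essentially the same approach as the paper: the key inequality $1-\Lambda^N(t)(\{(0,C)\})\le C-\frac{1}{N}\sum_i Y_i^N(t)$ is exactly the paper's estimate $\frac{1}{N}\sum_i\ind{Y_i^N\ne C}\le C-\overline{Z}_2^N/N$, just written in different notation. The paper's proof stops at this inequality and leaves the passage to the limit implicit, whereas you spell out the $\eps/2$ trick and the portmanteau argument; this extra care is harmless but not a different idea.
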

\begin{proof}
  This is simply due to the above proposition and the fact that

  \[
  \frac{1}{N}\sum_{i=1}^N \ind{Y_i^N(s){\ne}C} \leq   \frac{1}{N}\sum_{i=1}^N (C{-}Y_i^N(s))= C-\frac{\overline{Z}_2^N(t)}{N}
  \]
\end{proof}
\subsection{A Spectral Criterion of Stability}\label{sec1-Spec}
\addcontentsline{toc}{section}{\thesubsection \hspace{3.5mm} A Spectral Criterion of Stability}

In this section we investigate the stability properties of the non-linear dynamical system $(\Lambda(t))$ defined by Relation~\eqref{DynSys}. The corresponding linear system $(\Lambda_R(t))$ is the solution of Relation~\eqref{LinDynSys}. The goal of this section is of showing that if the linear process $(\Lambda_R(t))$ is converging sufficiently fast to equilibrium, the non-linear process will converge to this invariant distribution provided its initial state is  sufficiently close to it. 

As before,  $\pi_R$ is the probability distribution on ${\cal X}$ defined by Relation~\eqref{piR}, it is the invariant measure of $(\Lambda_R(t))$. If $R$ is a solution of Equation~\eqref{PiRFix} which has been studied in Section~\ref{1LocSec}, $\pi{=}\pi_R$ is an invariant measure of $(\Lambda(t))$. Following Aldous and Fill~\cite{AldousFill}, the ``distance'' between $\mu{\in}{\cal P}({\cal X})$ and a fixed probability $\pi$ is defined as 
\[
\|\mu{-}\pi)\|_2^2\steq{def} \sum_{z\in{\cal X}} \left(\frac{\mu(z)}{\pi(z)}{-}1\right)^2 \pi(z)
=\sum_{z\in{\cal X}}\frac{(\mu(z)-\pi(z))^2}{\pi(z)}.
\]
Lemma~3.26 of Aldous and Fill~\cite{AldousFill} shows that there exists a maximal $\kappa_R{>}0$ such that, for all $t{\ge}0$, 
\begin{equation}\label{e121}
\frac{\diff}{\diff t} \|\Lambda_R(t){-}\pi\|_2^2\leq {-}2\kappa_R\|\Lambda_R(t){-}\pi\|_2^2.
\end{equation}
This is the classical exponential convergence to equilibrium for finite Markov processes, the distance $\|\cdot\|_2$ gives the nice Inequality~\eqref{e121} of such phenomenon. The quantity $\kappa_R$ is the {\em spectral gap} of the process $(\Lambda_R(t))$, see Theorem~3.25 of Aldous and Fill~\cite{AldousFill} for a variational characterization.
\begin{theorem}\label{t138}
Let $R$ be a solution of Equation~\eqref{PiRFix} and assume that the spectral gap $\kappa_R$ of $(\Lambda_R(t))$ satisfies the condition
\begin{equation}\label{e128}
\kappa_R>\frac{\lambda}{1{-}R}\sqrt{\frac{C}{\rho_2}},
\end{equation}
then there exist positive constants $q$ and $\eps_0$ such that if, $\|\Lambda(0){-}\pi\|_2\le\eps_0$, then the relation 
\begin{equation}\label{e115}
\frac{\diff}{\diff t}\|\Lambda(t){-}\pi\|_2^2\le{-}q \|\Lambda(t){-}\pi\|_2^2, \quad \text{ for all } t\ge 0
\end{equation}
holds with $\pi{=}\pi_R$ defined by Relation~\eqref{piR}. In particular, $\pi$ is an exponentially stable equilibrium point of $(\Lambda(t))$.
\end{theorem}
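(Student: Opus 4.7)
The plan is to differentiate $\|\Lambda(t){-}\pi\|_2^2$ along the nonlinear flow and split the result into a contractive linear piece driven by the spectral gap $\kappa_R$ and a nonlinear remainder generated by the difference $Q_{\Lambda(t)}{-}Q_R$. I would then control the remainder by a multiple of $\|\Lambda(t){-}\pi\|_2^2$ strictly less than $2\kappa_R$ under hypothesis~\eqref{e128}. Set $f_t(z){=}\Lambda(t)(z)/\pi(z){-}1$, so that $\|\Lambda(t){-}\pi\|_2^2{=}\croc{f_t,f_t}_\pi$ and $\croc{f_t,1}_\pi{=}0$. Writing $Q_{\Lambda(t)}{=}Q_R{+}(Q_{\Lambda(t)}{-}Q_R)$, the identity $\pi Q_R{=}0$ gives
\[
\tfrac{1}{2}\tfrac{\diff}{\diff t}\|\Lambda(t){-}\pi\|_2^2 = \croc{f_t,(\Lambda(t){-}\pi)Q_R} + \croc{f_t,\Lambda(t)(Q_{\Lambda(t)}{-}Q_R)}.
\]
Reversibility of $Q_R$ under $\pi$ together with $\croc{f_t,1}_\pi{=}0$ identifies the first summand with ${-}{\cal E}_R(f_t,f_t)$, which is bounded above by ${-}\kappa_R\|\Lambda(t){-}\pi\|_2^2$ by the Poincar\'e inequality that underlies~\eqref{e121}.

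For the second summand, the rates of $Q_{\Lambda(t)}$ and $Q_R$ coincide except on class-$2$ arrivals at non-saturated states, where they differ by the scalar
\[
\Delta(t)\steq{def}\lambda\!\left(\frac{\Lambda(t)({\cal X}_+^c)}{\Lambda(t)({\cal X}_+)}-\frac{R}{1-R}\right) = \frac{\lambda(\Lambda(t)({\cal X}_+^c){-}R)}{(1{-}R)\Lambda(t)({\cal X}_+)},
\]
using $\pi({\cal X}_+^c){=}R$. Cauchy--Schwarz applied to $\Lambda(t)({\cal X}_+^c){-}R{=}\sum_{z\in{\cal X}_+^c}\pi(z)f_t(z)$ gives $|\Delta(t)|{\le}\lambda\sqrt{R}\,\|\Lambda(t){-}\pi\|_2/[(1{-}R)\Lambda(t)({\cal X}_+)]$. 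The second summand then factors as $\Delta(t){\cdot}S$ with
\[
S = \sum_{(x,y)\in{\cal X}_+}\Lambda(t)(x,y)\bigl[f_t(x,y{+}1){-}f_t(x,y)\bigr],
\]
and I would bound $|S|^2$ by Cauchy--Schwarz as
\[
\bigl(\textstyle\sum_{{\cal X}_+}\Lambda(t)(x,y)^2/\pi(x,y)\bigr)\bigl(\textstyle\sum_{{\cal X}_+}\pi(x,y)[f_t(x,y{+}1){-}f_t(x,y)]^2\bigr).
\]
The first factor tends to $\pi({\cal X}_+){=}1{-}R$ as $\Lambda(t){\to}\pi$; the second is controlled through the reversibility ratio $\pi(x,y{+}1)/\pi(x,y){=}\rho_2R/[(y{+}1)(1{-}R)]$ and the bound $y{+}1{\le}C$, so that a reindexing gives $\sum_{{\cal X}_+}\pi(x,y)f_t(x,y{+}1)^2{\le}(C(1{-}R)/(\rho_2R))\|\Lambda(t){-}\pi\|_2^2$. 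Combining everything,
\[
|\Delta(t){\cdot}S|\le\frac{\lambda}{1{-}R}\sqrt{\frac{C}{\rho_2}}\bigl(1{+}o(1)\bigr)\|\Lambda(t){-}\pi\|_2^2
\]
as $\|\Lambda(t){-}\pi\|_2{\to}0$. Under hypothesis~\eqref{e128}, this produces $\eps_0{>}0$ and $q{>}0$ such that $\|\Lambda(0){-}\pi\|_2{\le}\eps_0$ implies $(\diff/\diff t)\|\Lambda(t){-}\pi\|_2^2{\le}{-}q\|\Lambda(t){-}\pi\|_2^2$ initially; since this forces $\|\Lambda(t){-}\pi\|_2$ to decrease, a standard bootstrap/continuity argument extends the inequality to all $t{\ge}0$, and Gronwall then gives the announced exponential stability.

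The principal obstacle is the sharpness of the perturbation bound: a naive use of $(a{-}b)^2{\le}2(a^2{+}b^2)$ when estimating $\sum_{{\cal X}_+}\pi[f_t(x,y{+}1){-}f_t(x,y)]^2$ loses a factor $\sqrt{2}$ and would only yield the stricter condition $\kappa_R{>}\sqrt{2}\lambda\sqrt{C/\rho_2}/(1{-}R)$. Recovering the exact constant $\sqrt{C/\rho_2}$ of~\eqref{e128} presumably requires exploiting $\croc{f_t,1}_\pi{=}0$—either through a Poincar\'e-type inequality for the sub-Dirichlet form built from the class-$2$ transitions of $Q_R$ alone, or by decomposing $S$ into a piece tangent to the constants (which vanishes by mean-zero) and a transverse piece to be estimated separately. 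A secondary technical point is the uniformity of the $o(1)$ terms in $\Lambda(t)({\cal X}_+){\to}1{-}R$ and in the first Cauchy--Schwarz factor, which must be controlled on the closed neighbourhood $\{\|\mu{-}\pi\|_2{\le}\eps_0\}$ to make the bootstrap step rigorous.
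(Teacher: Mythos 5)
Your overall architecture matches the paper's: differentiate $F(t){=}\|\Lambda(t){-}\pi\|_2^2$, split the time-derivative into the contribution of the frozen linear generator (absorbed by the spectral gap inequality~\eqref{e121}) and a nonlinear remainder proportional to $\Delta(t){=}\lambda L_{\Lambda,R}$, bound $|\Delta(t)|$ by $C(R)\sqrt{F(0)}$ via Cauchy--Schwarz, and close with a bootstrap. Up to this point you are reproducing the paper's Steps~1--4 faithfully, and your identity $\Delta(0)\cdot S{=}{\cal I}_2/2$ is correct.

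The genuine gap, which you correctly flag but do not resolve, is in the bound on $S$. A single Cauchy--Schwarz on $S{=}\sum_{{\cal X}_+}\Lambda(0)(z)\bigl[f_0(z{+}e_2){-}f_0(z)\bigr]$ treats $f_0(z{+}e_2)$ and $f_0(z)$ as quantities of the same size, and the cross term cannot be discarded; this is exactly why your estimate degrades by a factor of order $\sqrt{2}$ and, worse, produces $\sqrt{C/\rho_2{+}R/(1{-}R)}$ rather than $\sqrt{C/\rho_2}$. Neither of your two proposed remedies (a sub-Dirichlet Poincar\'e inequality for the class-$2$ transitions, or a ``tangent to constants'' decomposition) is what the paper does, and I do not see how either would recover the constant without further work.

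The paper's fix is considerably more elementary and does not rely on any Poincar\'e-type input for $S$ itself. Write $\Lambda(0){=}\pi{+}(\Lambda(0){-}\pi)$ inside the bracket: the term $\Lambda(0)(z{-}e_2){-}\ind{z\in{\cal X}_+}\Lambda(0)(z)$ splits into a \emph{constant} part $\pi(z{-}e_2){-}\ind{z\in{\cal X}_+}\pi(z)$ and two parts that are linear in $\Lambda(0){-}\pi$. This gives, in the paper's notation, $S{=}{\cal J}_2{+}({\cal J}_1{+}{\cal J}_3)$, where ${\cal J}_2{=}\sum_{{\cal X}_+}\pi(z)[f_0(z{+}e_2){-}f_0(z)]$ is \emph{linear} in $f_0$ and hence $O(\sqrt{F(0)})$, while ${\cal J}_1{+}{\cal J}_3{=}\sum_{{\cal X}_+}\pi(z)f_0(z)[f_0(z{+}e_2){-}f_0(z)]$ is \emph{quadratic} and hence $O(F(0))$. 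Since $|\Delta(0)|$ is itself $O(\sqrt{F(0)})$, the quadratic pieces contribute only $O(F(0)^{3/2})$ to ${\cal I}_2$, i.e.\ $o(F(0))$, so they are harmless regardless of their constants. The entire burden of producing the sharp $\sqrt{C/\rho_2}$ then sits in bounding ${\cal J}_2$ alone, which a clean one-sided Cauchy--Schwarz accomplishes via $\sum_z\pi(z{-}e_2)^2/\pi(z){\le}C(1{-}R)^2/(R\rho_2)$. In short, the missing idea is a leading-order Taylor replacement of $\Lambda(0)$ by $\pi$ inside the bracket, exploiting that the prefactor $\Delta(0)$ is already first order: the quadratic error never needs a sharp estimate because it is multiplied by something small.

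Your observations on the uniformity of the $o(1)$ terms and on the bootstrap are correct and match the paper's Step~2. With the decomposition described above, your argument closes exactly as in the paper.
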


\bigskip

\noindent
{\sc Strategy of the proof}.
We first describe the main ideas. Our technique is based on a variation of an argument coming from the theory of attractors. The original approach consists of splitting the half-line $\R_+$ into intervals of large length and on each of them  the dynamical system is separated into two parts.  The first one takes the linear part of the equation for which there is an exponential convergence. The second one includes the non-linearity and is issued from zero on each interval. The convergence of the first part is then used to absorb the second part of the flow originated after the splitting and allows to obtain the desired result. We refer the reader to the paper Zelik~\cite{Zelik} where this powerful approach is used in another context. Unfortunately, the direct application of this technique does not lead to good results in our case  due to the strong non-linearity in Relation~\eqref{DynSys}. To overcome this difficulty, we shall first prove an  ``instantaneous" absorbing and then use a bootstrap argument to propagate; see Step 2 of the proof below.

\begin{proof}

\medskip
\noindent
{\sc Step 1: Splitting of the flow}. Let $\Lambda_R(t)$ be the solution of Equation~\eqref{LinDynSys} with a fixed initial point $\Lambda(0)$ and
  \[
  F(t){\steq{def}}\|\Lambda(t){-}\pi\|_2^2.
  \]
For $t{\ge}0$, we denote by $f'(t)$ the derivative of a differentiable function $f$ at $t$, we have
\[
  {F}'(t)=\frac{\diff}{\diff t}\sum_{z\in{\cal X}} \frac{([\Lambda(t){-}\pi](z))^2}{\pi{(z)}}=2\sum_{z{\in}{\cal X}} \frac{([\Lambda(t){-}\pi](z){\Lambda}'(t)(z)}{\pi{(z)}}.
  \]

We will now state two claims and show how they are used to establish our theorem. The proof of the claims conclude the proof of the theorem.
 \item {\em Claim~1}. The relation
\begin{equation}\label{e120} 
{\Lambda}'(0)(z)={\Lambda}_R'(0)(z)+\lambda L_{\Lambda, R}\left(\rule{0mm}{4mm}\Lambda(0)(z{-}e_2){-}\ind{z\in{\cal X}_+}\Lambda(0)(z)\right),
\end{equation}
holds, where 
\begin{equation}\label{LR}
L_{\Lambda, R} = \frac{\Lambda(0)({\cal X}_+^c)}{1{-}\Lambda(0)({\cal X}_+^c)}-\frac{R}{1{-}R}.
\end{equation}

 Taking $t{=}0$ and using that $\Lambda_R(0){=}\Lambda(0)$ together with Relation~\eqref{e120}, we obtain
\begin{multline}\label{e125}
{F}'(0)=2\sum_{z\in{\cal X}} \frac{([\Lambda_R(0){-}\pi](z)){\Lambda}_R'(0)(z)}{\pi(z)}\\
+2\lambda L_{\Lambda, R}\sum_{z\in{\cal X}} \frac{([\Lambda(0)-\pi](z))(\Lambda(0)(z{-}e_2){-}\ind{z\in{\cal X}_+}\Lambda(0)(z))}{\pi(z)}={\cal I}_1{+}{\cal I}_2.
\end{multline}
Note that the term  ${\cal I}_1$ is the derivative of the function  $t{\mapsto}\|\Lambda_R(t){-}\pi\|_2^2$ at zero. From Inequality~\eqref{e121}, we get therefore that 
\begin{equation}\label{AlFi}
\frac{{\cal I}_1}{2}\le {-}\kappa_R\|\Lambda_R(0){-}\pi\|_2^2={-}\kappa_R\|\Lambda(0)-\pi\|_2^2=-\kappa_R F(0).
\end{equation}

\smallskip
    \item  {\em Claim~2}. There exist $\eps{>}0$ and $\theta{<}2\kappa_R$ such that, for $\|\Lambda(0){-}\pi\|_2{\le} \eps$, 
\begin{equation}\label{e401}
 {\cal I}_2\le \theta F(0).
\end{equation}
 Combining this with ~\eqref{e125} and \eqref{AlFi}, we get
\begin{equation}\label{e402}
{F}'(0)\le -q F(0)
\end{equation}
for $\|\Lambda(0){-}\pi\|_2{\le} \eps$ and $q{=}2\kappa_R{-}\theta{>}0$.

\medskip

\noindent
    {\sc Step 2: Bootstrap argument.}
First note that we may suppose that $F(0){>}0$ holds since,  otherwise $\Lambda(0){=}\pi$ and therefore $\Lambda(t){=}\pi$ for all $t{\ge}0$. There is nothing to prove in this case.  Let
\[
\tau\steq{def}\inf\left\{s{>}0: F(s){>}\eps^2 \right\},
\]
with the convention that $\inf\emptyset{=}{+}\infty$.
Since $0{<}F(0){\le}\eps^2$, we have by Relation~\eqref{e402}, $F'(0){\le}{-}qF(0){<}0$, so that $\tau{>}0$.

The constants $\eps$ and $q$ in Inequality~\eqref{e402} do not depend on the initial point $\Lambda(0)$ as long as $\|\Lambda(0){-}\pi\|_2{\le} \eps$, we can apply the same argument considering the equation starting from $\Lambda(s)$, for $s{\ge}0$ as long as $\|\Lambda(s){-}\pi\|_2{\le} \eps$, and infer the differential inequality with $0$ replaced by $s$. We have therefore the relation 
\[
F'(s)\le  -qF(s).
\]
hence $F(s){\le} F(0)\exp(-qs)$, for all $0{\le}s{<}\tau$. This implies that $\tau$ is infinite.   Inequality~\eqref{e115} is established. It remains to prove our two claims, i.e.,  Relation \eqref{e120} and Inequality \eqref{e401}.

\medskip

\noindent 
   {\sc Step 3: Proof of the identity~\eqref{e120}.}\\
    From now on, $z$ denotes a generic element $(x,y){\in}{\cal X}$ of the state space and $e_1{=}(1,0)$, $e_2{=}(0,1)$ are the unit vectors of ${\cal X}$.

From  Equation~\eqref{DynSys}, we get
\begin{multline}\label{e117}
   \Lambda'(t)(z) = \lambda\left(\rule{0mm}{4mm}\Lambda(t)(z{-}e_2){-}\ind{z{\in}{\cal X}_+}\Lambda(t)(z)\right)\frac{\Lambda(t)({\cal X}_+^c)}{1{-}\Lambda(t)({\cal X}_+^c)}
\\  +\lambda\left(\rule{0mm}{4mm}\Lambda(t)(z{-}e_1)-\ind{z{\in}{\cal X}_+} \Lambda(t)(z)\right)
+\mu_1\left(\rule{0mm}{4mm}(x{+}1)\Lambda(t)(z{+}e_1)-x\Lambda(t)(z)\right)\\+\mu_2\left(\rule{0mm}{4mm}(y{+}1)\Lambda(t)(z{+}e_2)-y\Lambda(t)(z)\right),
\end{multline}
with the convention $\Lambda(t)(z'){=}0$ if $z'{\notin}{\cal X}$.

Similarly, the definition~\eqref{LinDynSys} gives
\begin{multline}\label{e118}
  \Lambda_R'(t)(z) = \lambda\left(\rule{0mm}{4mm}\Lambda_R(t)(z{-}e_2){-}\ind{{\cal X}_+}(z)\Lambda_R(t)(z)\right)\frac{R}{1{-}R}\\
+\lambda\left(\rule{0mm}{4mm}\Lambda_R(t)(z{-}e_1){-}\ind{{\cal X}_+}(z)\Lambda_R(t)(z)\right)+\mu_1\left(\rule{0mm}{4mm}(x+1)\Lambda_R(t)(z{+}e_1){-}x\Lambda_R(t)(z)\right)\\
  +\mu_2\left(\rule{0mm}{4mm}(y+1)\Lambda_R(t)(z{+}e_2){-}y\Lambda_R(t)(z)\right).
\end{multline}
Taking $t{=}0$ in these two relations and using  that $(\Lambda_R(t))$ and $(\Lambda(t))$ have the same initial conditions, we get the identity~\eqref{e120}. We now establish the most intricate inequality of our theorem, namely Inequality~\eqref{e401}.
\medskip

\noindent
    {\sc Step 4: A bound for ${\cal I}_2$.}
From the expression~\eqref{LR} of $L_{\Lambda, R}$, we get
\[
|L_{\Lambda, R}|\le \frac{1}{(1{-}R)^2}\frac{|\Lambda(0)({\cal X}_+^c){-}R|}{1{-}|\Lambda(0)({\cal X}_+^c){-}R|/(1{-}R)},
\]
and, by the Cauchy-Schwartz Inequality, 
\begin{multline*}
|\Lambda(0)({\cal X}_+^c){-}R|=\left|\sum_{z{\in}{\cal X}_+^c}[\Lambda(0){-}\pi](z)\right|= \left|\sum_{z{\in}{\cal X}_+^c}\sqrt{\pi(z)}\frac{[\Lambda(0){-}\pi](z)}{\sqrt{\pi(z)}}\right|\\
\le \left(\sum_{z{\in}{\cal X}_+^c}\pi(z)\right)^{1/2}\left(\sum_{z{\in}{\cal X}_+^c} \frac{([\Lambda(0){-}\pi](z))^2}{\pi(z)}\right)^{1/2}
=\sqrt{R}\|\Lambda(0){-}\pi\|_2=\sqrt{RF(0)}.
\end{multline*}
Combining these two relations, we get the inequality
\begin{align}\label{e123}
|L_{\Lambda, R}|\le \frac{1}{(1{-}R)^2}\frac{\sqrt{R F(0)}}{1 {-} \sqrt{R F(0)}/(1{-}R)}.
\end{align}
Since
\begin{multline}
\sum_{z{\in}{\cal X}} \frac{[\Lambda(0){-}\pi](z)[\Lambda(0)(z{-}e_2)-\ind{z{\in}{\cal X}_+}\Lambda(0)(z)]}{\pi(z)}\\=\sum_{z\in{\cal X},y{\ge}1} \frac{[\Lambda(0){-}\pi](z)[\Lambda(0){-}\pi](z{-}e_2)}{\pi(z)}
{+}\sum_{z{\in}{\cal X}} \frac{[\Lambda(0){-}\pi](z)[\pi(z{-}e_2){-}\ind{z{\in}{\cal X}_+}\pi(z)]}{\pi(z)}\\
{+}\sum_{z{\in}{\cal X}_+} \frac{[\Lambda(0){-}\pi](z)[\pi{-}\Lambda(0)](z)}{\pi(z)}{\steq{def}}{\cal J}_1{+}{\cal J}_2{+}{\cal J}_3,\label{e124}
\end{multline}
we obtain from Relations~\eqref{e123} and~\eqref{e124} that 
\begin{equation}\label{e130}
{\cal I}_2\le 2\lambda |L_{\Lambda,R}|\left(|{\cal J}_1|{+}|{\cal J}_2|{+}|{\cal J}_3|\right).
\end{equation}

\medskip

\noindent
    {\sc Step~5: Estimates for ${\cal J}_i$ and a final bound for ${\cal I}_2$.} \\
Clearly $|{\cal J}_3|{\le} \|\Lambda(0){-}\pi\|_2^2$. By using Relation~\eqref{piR} and the Cauchy-Schwartz Inequality, we get
\begin{multline}\label{e131}
|{\cal J}_1|\le \max_{z\in{\cal X}}{\sqrt{\frac{\pi(z{-}e_2)}{\pi(z)}}}\sum_{z\in{\cal X}, y\ge 1} \frac{|[\Lambda(t){-}\pi](z)|}{\sqrt{\pi(z)}}\frac{|[\Lambda(0){-}\pi](z{-}e_2)|}{\sqrt{\pi(z{-}e_2)}}\\
\le{\sqrt{\frac{C}{R\rho_2}}} \left(\sum_{z{\in}{\cal X}, y{\ge} 1} \frac{([\Lambda(0){-}\pi](z))^2}{\pi(z)}\right)^{1/2}\left(\sum_{z{\in}{\cal X}, y\ge 1} \frac{([\Lambda(0)-\pi](z{-}e_2))^2}{\pi(z{-}e_2)}\right)^{1/2}\\
\le {\sqrt{\frac{C}{R\rho_2}}}\|\Lambda(0){-}\pi\|_2^2={\sqrt{\frac{C}{R\rho_2}}}F(0).
\end{multline}
Another application of the Cauchy-Schwartz Inequality gives
\[
|{\cal J}_2|\le \left(\sum_{z{\in}{\cal X}}\frac{(\pi(z{-}e_2)-\mathbbm{1}_{{\cal X}_+}(z)\pi(z))^2}{{\pi(z)}}\right)^{1/2}\|\Lambda(0){-}\pi\|_2.
\]
On the other hand, by using Relations~\eqref{piR} and~\eqref{PiRFix}, we obtain
 \begin{multline*}
\sum_{z{\in}{\cal X}}\frac{(\pi(z{-}e_2)-\mathbbm{1}_{{\cal X}_+}(z)\pi(z))^2}{{\pi(z)}}\le \sum_{z{\in}{\cal X}}\frac{(\pi(z{-}e_2))^2}{{\pi(z)}}
\\= \sum_{z\in{\cal X}}\pi(z{-}e_2)\frac{y(1{-}R)}{R\rho_2}\le \frac{C(1{-}R)^2}{R\rho_2}
 \end{multline*}
 so that
\[
 |{\cal J}_2|\le (1{-}R)\left(\frac{C F(0)}{R\rho_2}\right)^{1/2}.
\]
 Inequalities~\eqref{e123} and~\eqref{e130} imply
 \begin{multline*}
   {\cal I}_2\le 2\lambda \frac{1}{(1{-}R)^2}\frac{\sqrt{R F(0)}}{1 {-} \sqrt{R F(0)}/(1{-}R)} \\\times\left((1{-}R)\left(\frac{C F(0)}{R\rho_2}\right)^{1/2}
 +\left({\sqrt{\frac{C}{R\rho_2}}}{+}1\right)F(0)\right),
 \end{multline*}
 consequently,
 \[
 \limsup_{\eps{\to}0} \sup_{0{<}F(0){\le} \eps^2} \frac{{\cal I}_2}{F(0)}
\le   \frac{2\lambda}{1{-}R} \sqrt{\frac{C}{\rho_2}}<2\kappa_R,
\]
by Assumption~\eqref{e128}. Relation~\eqref{e401} is thus established. The theorem is proved. 
\end{proof}

\section{The Dynamic Alternative Routing (DAR) Algorithm}\label{DARsec}
  Recall that, for this algorithm,  when a request cannot be accommodated at its arriving node, two other nodes are chosen at random. If both of them are non-saturated, the request takes one place in each of them. Otherwise, the request is rejected. This algorithm has initially been considered to cope with congestion in telephone networks by Gibbens et al.~\cite{Gibbens} in 1990 and in subsequent papers. 

  \subsection{The  Basic ODEs}
\addcontentsline{toc}{section}{\thesubsection \hspace{3.5mm} The  Basic ODEs}

We recall briefly the technical background for this algorithm. See Gibbens et al.~\cite{Gibbens}. There is a set $V$ of vertices and for each couple $(A,B){=}(B,A)$ of vertices, referred to as a link/node, there is a Poisson flow of requests with rate $\lambda$, referred to as calls/jobs, to establish a connection between $A$ and $B$. The capacity constraint is that there are at most $C$ jobs/calls at a given time on any node/link. The state of the process is given by the number of jobs in the links/nodes of the network. The algorithm works as follows. If a node $(A,B)$ has strictly less than $C$ jobs at some instant, then an arriving requests on $(A,B)$ is accepted. Otherwise, a random vertex $C{\not\in}\{A,B\}$ is chosen at random and if both nodes $(A,C)$ and $(C,B)$ have  strictly less than $C$ jobs, then the job occupies a place in $(A,B)$ and in $(C,B)$ during an exponentially distributed amount of time with parameter $1$. If one of the nodes  $(A,B)$, $(C,B)$ is saturated the request is rejected.

The mean-field result of Graham and M\'el\'eard~\cite{GM} described below shows that, from the point of view of the convergence of the empirical distribution process $(\Lambda^N(t))$ defined by Relation~\eqref{EmpIntro}, the DAR algorithm has the same limiting behavior as the following allocation algorithm. There is a set of $N{=}|V|(|V|{-}1)/2$ nodes with finite  capacity $C$, each node receives a Poisson flow of jobs with parameter $\lambda$ to be processed at rate $1$. When a request arrives at a saturated node $\ell$, two other nodes are chosen at random.  If both of them are not saturated, a new request is added to each of them. Otherwise the initial request to node $\ell$ is rejected.

For $1{\leq}\ell{\le}N$, $L_\ell^N(t)$ denotes the number of jobs in node $\ell$ at time $t{\ge}0$. Note that $(L_\ell^N(t),1{\le}\ell{\le}N)$  is not a Markovian process.  The mean-field result of Graham and M\'el\'eard~\cite{GM} conjectured by Gibbens et al.~\cite{Gibbens} is as follows. The initial state is given by i.i.d. random variables with some distribution $\mu$ on $\{0,\ldots,C\}$  for the $L_\ell^N(0)$, $1{\le}\ell{\le}N$, and without any request using two nodes.  It has been shown in~\cite{GM} that the  convergence in distribution, 
\begin{equation}
  \lim_{N\to+\infty} (L_\ell^N(t))= (\overline{L}_C(t)),
\end{equation}
holds for any $\ell{\ge}1$, where, for $t{\ge}0$, the distribution of $\overline{L}_C(t)$, given by the vector $(\P(\overline{L}_C(t){=}k), 0{\le}k{\le}C){=}(x^C_{k}(t),0{\le}k{\le}C)$, is the solution of the following ODEs, for $1{\leq}j{<}C$,
\begin{equation}\label{GHK}
\begin{cases}
\displaystyle   \frac{\diff x^C_{0}}{\diff t}(t) \displaystyle = x^C_{1}(t)-  \lambda h\left(x^C_{C}(t)\right)x^C_{0}(t),\vspace{2mm}\\
\displaystyle  \frac{\diff x^C_{j}}{\diff t}(t) \displaystyle =  \lambda h\left(x^C_{C}(t)\right)x^C_{{j-1}}(t)+ (j{+}1) x^C_{{j+1}}(t)-( \lambda h\left(x^C_{C}(t)\right){+}j )x^C_{j}(t),\vspace{2mm}\\
\displaystyle    \frac{\diff x^C_{C}}{\diff t}(t)\displaystyle =   \lambda h\left(x^C_{C}(t)\right)x^C_{{C-1}}(t)- C x^C_{C}(t),
\end{cases}
\end{equation}
with  $h(x){\steq{def}} (1{+}2 x(1{-}x))$, with initial condition $(x^C_k(0)){=}(\mu(k))$. 

We give an intuitive explanation of this system, to explain the role of the function $h$ in particular. For $t{\ge}0$, $x_C^C(t)$ is the fraction of saturated nodes, i.e, the number of nodes with $C$ jobs. The mean-field limit expresses an asymptotic independence property: the numbers of jobs at a fixed finite subset of nodes are, in the limit, independent and  $x_C^C(t)$ is the probability that an arbitrary node is saturated. Each non-saturated node accepts external requests arriving at rate $\lambda$. It can also be occupied with a request which has arrived at a saturated node and if this request has picked (at random) this empty node and another non-saturated node. With the independence approximation, this occurs with probability
\[
2x_C^C(t)(1-x_C^C(t)).
\]
The equations of the system~\eqref{GHK} can then be easily be explained.

Equations~\eqref{GHK} can be seen as the set of Fokker-Planck equations for a non-homogeneous $M/M/C/C$ queue for which the arrival rate at time $t$ is $\lambda h\left(x^C_{C}(t)\right)$ and the service rate is $1$. Equivalently, from a probabilistic point of view, the process $(\overline{L}_C(t))$ has the same distribution as the solution of the following McKean-Vlasov SDE
\begin{multline}\label{GHKMV}
\diff \overline{L}_C(t)=\ind{\overline{L}_C(s{-}){<}C}{\cal P}_1\left(\left[0,\lambda h\left(\P\left(\overline{L}_C(t){=}C\right)\right)\right]{\times}\diff t\right)\\ {-}{\cal P}_2\left(\left[0,\overline{L}_C(t{-})\right]{\times}\diff t\right),
\end{multline}
with $\overline{L}_C(0){\steq{dist}}\mu$, where ${\cal P}_i$, $i{=}1$, $2$ are independent Poisson processes on $\R_+^2$ with rate $1$. 

An equilibrium point $(x_k^*)$ of the dynamical system~\eqref{GHK} is given by
\[
x_k^*=\frac{1}{Z_C} \frac{\left(\lambda h(x_C^*)\right)^k}{k!}, \quad 0{\le}k{<}C,
\]
where $Z_C$ is the normalization constant and $z{=}x_C^*$ is a positive solution of the fixed point equation
\begin{equation}\label{eqfp}
\frac{\left(\lambda h(z)\right)^C}{C!}-z\sum_{k=0}^C \frac{\left(\lambda h(z)\right)^k}{k!}=0.
\end{equation}
There always exists a solution to this equation since the  left-hand side of Relation~\eqref{eqfp} is positive for $z{=}0$ and negative for $z{=}1$. The rest of this section is devoted to determine the conditions under which there may exist several solutions for this equation and to investigate their stability properties for the dynamical system~\eqref{GHK}. 

The insightful Gibbens et al.~\cite{Gibbens} suggests, through approximations and numerical experiments, that this equation may have in fact several solutions: 
  {\em
\begin{quote}\label{GHKQ}
``Observe the possibility of multiple solutions for $x_C^*$, for C large enough and for a narrow range of the ratio $\lambda/C$. The upper and lower solutions correspond to stable fixed points for the system of equations (2)â(5), while the middle solution corresponds to an unstable fixed point.''
\end{quote}
  }
The notations have been adapted. To the best of our knowledge, these statements do not seem to have been established in a more formal way. The rest of this section is devoted to a scaling analysis of the set of ODEs~\eqref{GHK}. As suggested by these numerical experiments, we will study the case of a large capacity $C$. Concerning the stability results of this assertion, we have not been able to prove them as such. Sections~\ref{2StabSec} and~\ref{nlMM1} give only partial results in this domain.

\subsection{An Asymptotic Dynamical System}\label{ADS}
\addcontentsline{toc}{section}{\thesubsection \hspace{3.5mm} An Asymptotic Dynamical System}
We denote by $(x^C_k(t))$ the solution of the  ODE~\eqref{GHK} when $\lambda$ is replaced by $\lambda C$ and $h(x){=}(1{+}ax(1{-}x))$, for some $a{>}1$ and $x{\in}(0,1)$.
 For this algorithm too,  there is a kind of analogue of the regime analyzed in Section~\ref{sec1-Sat}, in the sense that it has some intuitive explanation. In this regime, in the limit, all jobs are accommodated without rerouting provided that the initial state is not already saturated. As  will be seen, for the same parameters, there are nevertheless two other distinct equilibrium regimes for which a job is rerouted with positive probability as $C$ gets large. 

\begin{theorem}[The Solutions of a Fixed Point Equation]\label{ThNbFP}
 For the fixed point equation~\eqref{eqfp} with $\lambda{=}\nu C$ for some $\nu{>}0$, and  $h(x){=}1{+}ax(1{-}x)$ for $a{>}1$, there exists $C_0{>}0$ such that, for $C{\ge}C_0$,
  \begin{enumerate}
 \item If $\nu{\in}(0,1)$, then there exists a solution $x_{C,1}^*{\in}(0,1)$ of Equation~\eqref{eqfp} such that $\nu h(x_{C,1}^*){<}1$ and
\begin{equation}\label{nuh-1}
    \lim_{C\to+\infty} x_{C,1}^*=0.
\end{equation}
    \item If $\nu{\in}(\nu_a,1)$, with 
\begin{equation}\label{lambdaa}
\nu_a\steq{def}{3}\left/\left(1{+}\frac{2}{9}a{+}\frac{2}{3}\frac{(a{+}3)^{3/2}}{\sqrt{a}}\right),\right.
\end{equation}
then there are three solutions $x_{C,i}^*{\in}(0,1)$, $i{\in}\{1,2,3\}$ of Equation~\eqref{eqfp}, such that $\nu h(x_{C,1}^*){<}1$ and $\nu h(x_{C,i}^*){>}1$, for $i{=}2$, $3$,  and the limiting values of $(x_{C,2}^*)$ and $(x_{C,3}^*)$  are the two  solutions of the polynomial equation
\begin{equation}\label{polfp}
a{z}^{3}{-}2a{z}^{2}{+}(a{-}1)z{+}1=\frac{1}{\nu}
\end{equation}
in $(0,1)$ and $(x_{C,1}^*)$ satisfies Relation~\eqref{nuh-1}.
\item  If $\nu{>}1$, there exists a unique solution $x_{C,1}^*{\in}(0,1)$ of Equation~\eqref{eqfp}, and the sequence $(x_{C,1}^*)$ is converging to the unique solution of Relation~\eqref{polfp} in the interval $(0,1)$. 
  \end{enumerate}
\end{theorem}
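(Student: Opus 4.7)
Following the approach used for Proposition~\ref{prop1T}, I would divide Equation~\eqref{eqfp} by $(\nu C h(z))^C/C!$ to recast it as the equivalent fixed-point equation $\phi_C(z)=1$, where
$$\phi_C(z)=z\sum_{j=0}^C\frac{C!}{(C-j)!\,(\nu C h(z))^j}.$$
This form isolates the nonlinearity and makes the Erlang-B asymptotics explicit. On the overloaded set $\{\nu h(z)>1\}$, the ratios $C!/((C-j)!\,C^j)$ converge to $1$ for each fixed $j$ and the tail is a geometric series of common ratio $1/(\nu h(z))<1$, so that
$$\phi_C(z)\longrightarrow\phi_\infty(z)=\frac{\nu z h(z)}{\nu h(z)-1}$$
uniformly on compact subsets. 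The limiting equation $\phi_\infty(z)=1$ rearranges to $\nu(1-z)h(z)=1$, which is precisely~\eqref{polfp} in the form $P(z)=1/\nu$ with $P(z)=(1-z)h(z)=az^3-2az^2+(a-1)z+1$. On the strictly underloaded set $\{\nu h(z)<1\}$, by contrast, the Erlang-B factor is exponentially small in $C$, so $\phi_C(z)=z/B(C,\nu Ch(z))\to+\infty$ for every fixed $z>0$ while $\phi_C(0)=0$; when $\nu<1$ this set contains a neighbourhood of the origin and an intermediate-value argument produces an exponentially small root $x_{C,1}^*\to 0$, which is the solution of items~(1) and~(2) satisfying $\nu h(x_{C,1}^*)<1$. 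When $\nu>1$ the underloaded set is empty in $[0,1]$, so this branch contributes no solution.

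For the overloaded roots I would analyse the cubic $P$ purely as a real-variable problem. Its critical points are $z_\pm=(2\pm s)/3$ with $s=\sqrt{1+3/a}$; for $a>1$ only $z_-$ lies in $(0,1)$ and is a local maximum, while $z_+>1$. Substituting the critical relation $3az_-^2=4az_--(a-1)$ back into $P$ gives a closed form for $P(z_-)$, which after simplification identifies the constant~\eqref{lambdaa} as $1/P(z_-)$. Since $P(0)=1$, $P(1)=0$ and $P$ is strictly unimodal on $[0,1]$ with maximum $P(z_-)$, a preimage count gives three regimes: for $\nu\in(\nu_a,1)$ one has $1/\nu\in(1,P(z_-))$ and $P=1/\nu$ has exactly two roots in $(0,1)$, one on each side of $z_-$; for $\nu>1$ one has $1/\nu<1=P(0)$ and there is a unique root, located in $(z_-,1)$; for $\nu<\nu_a$ no root exists in $(0,1)$. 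Each such root $z^*$ automatically satisfies $\nu h(z^*)=1/(1-z^*)>1$, so it lies in the open overloaded region.

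To lift a limit root $z^*$ back to finite large $C$, one uses that $z^*$ is a simple root of $\phi_\infty=1$, i.e.\ $\phi_\infty'(z^*)\neq 0$; combined with the uniform convergence of $\phi_C$ to $\phi_\infty$ on a compact neighbourhood of $z^*$ in the open overloaded region, the implicit function theorem produces a root $x_C$ of $\phi_C=1$ tending to $z^*$. This yields the two overloaded roots in item~(2) and the unique root in item~(3); uniqueness of $x_{C,1}^*$ in item~(1) is obtained by checking that $\phi_C$ is strictly increasing on the underloaded component containing the origin for $C$ large. The main technical obstacle is the non-uniformity of $\phi_C\to\phi_\infty$ near the critical curve $\nu h(z)=1$, where Erlang-B enters its heavy-traffic regime and develops a Gaussian boundary layer of width $O(1/\sqrt{C})$. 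I would handle this by noting that every overloaded limit root $z^*$ keeps $\nu h(z^*)=1/(1-z^*)$ bounded away from $1$ on any compact subinterval of $(0,1)$, and by confining $x_{C,1}^*$ to a shrinking neighbourhood of $0$ where $\nu h(z)\le\nu<1$. The genuinely delicate point is the fold bifurcation at $\nu=\nu_a$, where the two overloaded roots of $P$ merge at $z_-$ and the naive implicit-function argument breaks down; there I would expand $P$ quadratically around $z_-$ to extract a precise $O(\sqrt{\nu-\nu_a})$ splitting of the two finite-$C$ roots, mirroring the technique used for Proposition~\ref{prop1T}.
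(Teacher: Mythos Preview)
Your approach is essentially the paper's: your $\phi_C$ is $1-\Psi_C$ in the paper's notation, the split into the underloaded region $\{\nu h(z)<1\}$ and the overloaded region $\{\nu h(z)>1\}$ is the same, and the analysis of the cubic $P(z)=(1-z)h(z)$ with its local maximum at $z_-=(2-\sqrt{1+3/a})/3$ and value $P(z_-)=1/\nu_a$ matches the paper's computation of $x_0$ and $(1-x_0)h(x_0)$ verbatim. For existence of the overloaded roots the paper uses explicit sign changes of $\overline{\Psi}$ at three points rather than the implicit function theorem, but these are equivalent.

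There is one genuine gap. For item~(3) you need \emph{global} uniqueness of the root of $\phi_C=1$ on $[0,1]$, and the implicit function theorem only delivers a root near the unique limiting root $z^*$; it does not by itself exclude additional roots of $\phi_C$ elsewhere, nor multiple roots clustering near $z^*$. The paper closes this gap by an explicit derivative computation: it shows that
\[
\frac{\diff}{\diff z}\overline{\Psi}(z)\le -\frac{\nu(\nu-1)}{(\nu(1+a/4)-1)^2}<0
\]
for all $z\in[0,1]$ when $\nu>1$, establishes that $\Psi_C'\to\overline{\Psi}'$ uniformly on $[0,1]$ (since $\nu h(z)\ge\nu>1$ there), and then argues by contradiction that two roots of $\Psi_C$ would force a zero of $\Psi_C'$. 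Your outline needs either this derivative argument or an equivalent: e.g.\ first confine all roots of $\phi_C=1$ to a small neighbourhood of $z^*$ using uniform convergence and the fact that $\phi_\infty-1=(1-\nu P)/(\nu h-1)$ keeps a definite sign on each side of $z^*$, and then invoke $C^1$-convergence and $\phi_\infty'(z^*)\ne 0$ to get local monotonicity of $\phi_C$ on that neighbourhood.

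Two minor remarks: the theorem only asserts $\nu\in(\nu_a,1)$ strictly, so your discussion of the fold at $\nu=\nu_a$ is not needed; and item~(1) claims only existence of $x_{C,1}^*$, not uniqueness, so the monotonicity of $\phi_C$ on the underloaded component is extra.
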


For the Gibbens et al.  model which corresponds to the case $a{=}2$, this gives $\nu_2{\sim}0.937$,  hence $(0.937,1)$ is the ``{\em  narrow range of the ratio $\lambda/C$}'' quoted by these authors, see page~\pageref{GHKQ}, for which there are three solutions to the fixed point equation.

According to~(1), when $\nu{<}1$,  there is an equilibrium in the light-load regime ($x_{C,1}^*{\sim}0$). We will see a more precise result, Proposition~\ref{Stab2prop}, concerning the asymptotic local stability of this equilibrium.  When $\nu_a{<}\nu{<}1$, there are two other equilibrium points but in a saturated regime, $x_{C,i}^*{\geq}\eta{>}0$, for $i{=}1$, $2$ and $C$ sufficiently large.
\begin{proof}
The fixed point equation~\eqref{eqfp} with $\lambda$ replaced by $\nu C$ can be rewritten as $\Psi_{C}(z){=}0$, with, for $z{>}0$,
\[
\Psi_C(z)\steq{def}1{-}z\sum_{k=0}^C \frac{C!}{(C{-}k)!}\frac{1}{\left(\nu C h(z)\right)^k}
=1{-}z\sum_{k=0}^C \prod_{i=0}^{k{-}1}\left(1{-}\frac{i}{C}\right) \frac{1}{\left(\nu h(z)\right)^k}.
\]
Note that the function $C{\mapsto}\Psi_C(z)$ is decreasing. 

If $\nu{<}1$, we can choose $\eps{<}1/2$ sufficiently small so that  $\delta{=}\nu h(\eps){<}1/2$ holds.  For $z{\le}\eps$, we have 
\[
\sum_{k=0}^C \prod_{i=0}^{k{-}1}\left(1{-}\frac{i}{C}\right) \frac{1}{\left(\nu h(z)\right))^k}
\ge
\sum_{k=0}^{\lfloor \delta C\rfloor} \prod_{i=0}^{k{-}1}\left(1{-}\frac{i}{C}\right) \frac{1}{\delta^k}
\ge
\sum_{k=0}^{\lfloor \delta C\rfloor} \left(\frac{1{-}\delta}{\delta}\right)^k.
\]
In particular, for $0{<}z{<}\eps$, $(\Psi_C(z))$ converges to ${-}\infty$ as $C$ goes to infinity. Since $\Psi_C(0){=}1$, one can find $C_0{>}0$ such that if $C{\ge}C_0$ there is a zero of $\Psi_C$ in the interval $(0,\eps)$.

For $\delta{>}1$, it is easily checked that the convergence
\begin{equation}\label{cvC}
\lim_{C\to+\infty} \Psi_{C}(z) =\overline{\Psi}(z)\steq{def}1{-}z\frac{1}{1-1/(\nu h(z))}=\frac{\nu(1{-}z)h(z){-}1}{\nu h(z){-}1}
\end{equation}
holds uniformly for all  $z{\in}(0,1)$  such that  $\nu h(z){>}\delta$. Equation~\eqref{eqfp} becomes, in the limit,
\begin{equation}\label{eq2-2}
(1{-}z)h(z)=\frac{1}{\nu}.
\end{equation}
Note that any solution $z{<}1$ of such equation satisfies $\nu h(z){>}1$.

The quantity $(1{-}z)h(z)$ is the polynomial $a{z}^{3}{-}2a{z}^{2}{+}az{-}z{+}1$ which is increasing from $1$ on the interval $[0,x_0]$ and decreasing on $[x_0,1]$, with
\[
x_0\steq{def}\frac{1}{3}\left(2{-}\sqrt{\frac{a{+}3}{a}}\right),\quad  (1{-}x_0)h(x_0){=}1{/}\nu_a{=}\frac{1}{3}\left(1{+}\frac{2}{9}a{+}\frac{2}{3}\frac{(a{+}3)^{3/2}}{\sqrt{a}}\right).
\]
Equation~\eqref{eq2-2} has therefore two solutions if and only if  $\nu{\in}(\nu_a,1)$, one solution when $\nu{=}\nu_a$ or $\nu{>}1$ and none if $\nu{<}\nu_a$.

If $\nu{\in}(\nu_a,1)$, for $\eps{>}0$ sufficiently small, there exist $z_0{<}z_1{<}z_2$ such that $\nu h(z_i){>}1$, for $i{\in}\{0,1,2\}$, and  $\overline{\Psi}(z_0){<}{-}\eps$, $\overline{\Psi}(z_1){>}\eps$ and $\overline{\Psi}(z_2){<}{-}\eps$. Consequently, there exists $K_0$, such that, if $C{\ge}K_0$, then the last three inequalities hold with $\overline{\Psi}$ replaced by $\Psi_C$ and $\eps$ by $\eps/2$.  Hence, we get that there are two solutions of the relation $\Psi_C(z){=}0$ such that $\nu h(z){>}1$ for $C{\ge}K_0$. Assertion~(2) is proved.

If $\nu{>}1$, the convergence~\eqref{cvC} is uniform for  $z{\in}[0,1]$. Since $\overline{\Psi}(0){>}0$ and  $\overline{\Psi}(1){<}0$,  by the same argument as before, there exists some $K_1$ such that if $C{\ge}K_1$ then there is a solution $x_{C,1}^*$ of Relation~\eqref{eqfp}. A simple calculation gives
\[
\frac{\diff}{\diff z} \overline{\Psi}(z)=-\frac{\nu}{(\nu h(z){-}1)^2}\left(\nu{z}^{2} \left( z{-}1 \right) ^{2}{a}^{2}{-}{z}\left( 2\nu (z{-}1){-}3z{+}2 \right) a{+}\nu{-}1\right),
\]
and one has
\begin{multline*}
  \nu{z}^{2}\left( z{-}1 \right) ^{2}{a}^{2}{-}{z}\left( 2\nu(z{-}1){-}3z{+}2 \right) a\geq az (3z{-}2+ \left( {z}^{3}{-}2{z}^{2}{-}z{+}2 \right) \nu)\\
\ge  az (3z{-}2+ \left( {z}^{3}{-}2{z}^{2}{-}z{+}2 \right))=az^2(z^2{-}2z{+}2)\ge 0,
\end{multline*}
by using successively  that $a{\ge}1$, and that ${z}^{3}{-}2{z}^{2}{-}z{+}2{=}(1{-}z)(2{+}2z{-}z^2){\ge}0$ when $z{\in}[0,1]$, and finally  $\nu{\ge}1$. 
Consequently, we get that
\begin{equation}\label{eqts}
\frac{\diff}{\diff z} \overline{\Psi}(z)\leq-\frac{\nu(\nu{-}1)}{(\nu h(z){-}1)^2}\leq -\frac{\nu(\nu{-}1)}{(\nu (1{+}a/4){-}1)^2}<0
\end{equation}
holds for all $z{\in}[0,1]$.

Note that, as for Relation~\eqref{cvC}, since $\nu{>}1$, the convergence
\[
\lim_{C{\to}{+}\infty} \frac{\diff}{\diff z} {\Psi}_C(z)=\frac{\diff}{\diff z} \overline{\Psi}(z)
\]
holds uniformly for $z{\in}[0,1]$. 

To prove the uniqueness, we assume that  there is a sub-sequence $(C_n)$ converging to infinity for which  the  equation $\Psi_{C_n}(z){=}0$ has two solutions. It implies in particular that we have a sequence $(z_n)$ of $(0,1)$ such that ${\Psi}_{C_n}'(z_n){=}0$. Due to the uniform convergence, this is in contradiction with Relation~\eqref{eqts}.  The proposition is proved.

\end{proof}
When $\nu{<}1$,  with our method based on the asymptotic behavior of $\Psi_C(z)$ as $C$ gets large, we have not been able to prove that {\em all} solutions of Equation~\eqref{eqfp} are identified though this is very likely the case.

\subsection*{A Scaled Version of the Dynamical System}
For the moment we have given a scaled version of the fixed point equations. It turns out that one can also get some insight from a scaled version of the dynamical system~\eqref{GHK} converging to a non-trivial dynamical system whose fixed points are described in Theorem~\ref{ThNbFP}.

Let us introduce some notations.  The set of bounded sequences is denoted by ${\cal B}(\N)$, it is endowed with the norm, for $z{\in}{\cal B}(\N))$,
\[
\|z\|=\sum_{k=0}^{+\infty}\frac{1}{2^k} |z_k|.
\]
For $T{>}0$ and $(z(t)){=}(z_k(t)){\in}{\cal C}(\R_+,{\cal B}(\N))$  a continuous function on $\R_+$, we define
\[
\|z\|_{T}\steq{def} \sup_{0{\le}t{\le}T} \|z(t)\|.
\]
Additionally ${\cal P}(\N)$ is the set of probability distributions on $\N$.

The scaling consists of slowing down the time scale by a factor $C$ and by looking at the number of empty places for the McKean-Vlasov process. 
\begin{proposition}[Asymptotic Dynamical System]\label{propscale}
  If $(x_k^C(t))$ is the solution of the set of ODEs  defined by Relation~\eqref{GHK} with $\lambda{=}\nu C$, with an initial point such that
\[
\lim_{C\to+\infty}\|(x^C_{C{-}k}(0), k{\in}\N){-}(q_0(k),k{\in}\N)\|=0
\]
for some probability distribution $q_0{\in}{\cal P}(\N)$ then, as $C$ goes to infinity,  the process
  \[
  (y_k^C(t),k{\in}\N)\steq{def} \left(x^C_{C-k}\left({t}/{C}\right), k{\in}\N\right)
  \]
 is converging in distribution for the uniform  norm $\|{\cdot}\|_T$ to $(\Gamma(t)){\in}{\cal C}(\R_+,{\cal P}(\N))$ which is the unique solution of the set of differential equations
 \begin{equation}\label{MM1NL}
\begin{cases}
\displaystyle    \frac{\diff}{\diff t} \Gamma_{0}(t) =   \nu h\left(\Gamma_{0}(t)\right)\Gamma_{{1}}(t)-  \Gamma_{0}(t),\vspace{2mm}\\
\displaystyle  \frac{\diff}{\diff t} \Gamma_{k}(t) {=}  \nu h\left(\Gamma_{0}(t)\right)\Gamma_{{k+1}}(t){+} \Gamma_{{k-1}}(t){-}\left( \nu h\left(\Gamma_{0}(t)\right){+}1\right)\Gamma_{k}(t),\quad k{\ge}1,
\end{cases}
 \end{equation}
 with $\Gamma(0){=}q_0$.
\end{proposition}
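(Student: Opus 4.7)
The plan is a standard ODE-limit pattern: rescale, prove relative compactness, identify the limit, and conclude by uniqueness.

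\textbf{Step 1 (rescaling).} I substitute $y_k^C(t){:=}x_{C-k}^C(t/C)$ and extend by $y_k^C{\equiv}0$ for $k{>}C$. A direct computation from~\eqref{GHK} with $\lambda{=}\nu C$ yields, for $1{\le}k{<}C$,
\[
\frac{d}{dt}y_k^C = \nu h(y_0^C)\,y_{k+1}^C+\frac{C{-}k{+}1}{C}\,y_{k-1}^C-\left(\nu h(y_0^C)+\frac{C{-}k}{C}\right)y_k^C,
\]
together with $\frac{d}{dt}y_0^C{=}\nu h(y_0^C)y_1^C{-}y_0^C$ and a boundary equation at $k{=}C$. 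This agrees with~\eqref{MM1NL} up to the deviations of the coefficients $(C{-}k)/C$ and $(C{-}k{+}1)/C$ from unity, which are of order $O(k/C)$ and negligible on every fixed window.

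\textbf{Step 2 (compactness and limit).} Since $h$ is bounded on $[0,1]$ by some $M_a$ and each $y_k^C(t){\in}[0,1]$, the right-hand side above is bounded in absolute value by $L{:=}2(\nu M_a{+}1)$, uniformly in $k$ and $C$. Splitting the weighted sum at some level $K$ gives
\[
\|y^C(t){-}y^C(s)\|\le 2L|t{-}s|+2^{1-K},
\]
hence equicontinuity in $\|\cdot\|$. Combined with the uniform bound $\|y^C(t)\|{\le}2$, Arzel\`a--Ascoli yields relative compactness in $\mathcal{C}([0,T],\mathcal{B}(\N))$. For any subsequential limit $\Gamma$, the bound $|y_k^C(t){-}\Gamma_k(t)|{\le}2^k\|y^C(t){-}\Gamma(t)\|$ gives uniform coordinate-wise convergence on $[0,T]$, and passing to the limit in the integral form of the rescaled ODE (using continuity of $h$ and $(C{-}k)/C{\to}1$ on each fixed $k$) shows that $\Gamma$ solves~\eqref{MM1NL} with $\Gamma(0){=}q_0$.

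\textbf{Step 3 (uniqueness).} For two solutions $\Gamma,\tilde\Gamma$ in $\mathcal{C}([0,T],\mathcal{P}(\N))$ and $\Delta{:=}\Gamma{-}\tilde\Gamma$, the Lipschitz bound $|h(\Gamma_0){-}h(\tilde\Gamma_0)|{\le}L_h|\Delta_0|{\le}L_h\|\Delta\|$ combined with term-by-term control of the difference of ODEs in the weighted norm gives $\frac{d}{dt}\|\Delta\|{\le}K_0\|\Delta\|$ for an explicit $K_0$; since $\Delta(0){=}0$, Gronwall forces $\Delta{\equiv}0$. Uniqueness then forces the full sequence $(y^C)$ to converge to $\Gamma$, which is the asserted conclusion.

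\textbf{Main obstacle.} The most delicate point is ensuring that the limit $\Gamma$ stays in $\mathcal{P}(\N)$, i.e., $\sum_k\Gamma_k(t){=}1$, since the weighted norm is insensitive to mass escaping to infinity and Fatou alone only gives $\sum_k\Gamma_k(t){\le}1$. I would address this via a uniform-in-$C$ tail control for $(y^C)$ provided by a Lyapunov inequality of the form $\frac{d}{dt}\sum_k\alpha^k y_k^C{\le}(\alpha{-}1)\sum_k\alpha^k y_k^C$ for some $\alpha{>}1$. The input is that the hypothesized convergence $\|(y_k^C(0)){-}(q_0(k))\|{\to}0$ of probability distributions with equal total mass upgrades automatically to total-variation convergence by Scheff\'e's theorem, giving the tail uniformity at $t{=}0$; one then approximates $q_0$ by distributions with finite exponential moment, applies the Lyapunov estimate in each case, and passes to the limit using the continuity of the flow of~\eqref{MM1NL} with respect to initial data (the Gronwall estimate above) to obtain mass preservation for general $q_0{\in}\mathcal{P}(\N)$.
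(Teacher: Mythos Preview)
Your argument follows the paper's proof almost exactly: rescale to obtain the system~\eqref{GHK3}, apply Arzel\`a--Ascoli for compactness, pass to the limit in the integral equations, and conclude by a Gr\"onwall uniqueness argument. The one substantive difference is the norm used for uniqueness: the paper works with the unweighted $\ell^1$ quantity $U(t)=\sum_k|\Gamma_k(t)-\tilde\Gamma_k(t)|$ (finite because both solutions are assumed to lie in $\mathcal P(\N)$) and obtains $U(t)\le 30\nu\int_0^t U(s)\,ds$, whereas you run Gr\"onwall in the weighted norm $\|\cdot\|$.

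On the mass-preservation issue you flag: you are being more careful than the paper, which simply asserts that limiting points satisfy~\eqref{MM1NL} and then proves uniqueness \emph{within} $\mathcal C(\R_+,\mathcal P(\N))$ without verifying that limits actually stay probability-valued. Your Lyapunov inequality $\frac{d}{dt}\sum_k\alpha^k y_k^C\le(\alpha{-}1)\sum_k\alpha^k y_k^C$ is correct (a short telescoping check), but the approximation step you sketch---replacing $q_0$ by compactly supported data and passing to the limit via continuity of the flow in $\|\cdot\|$---does not quite close, since convergence in the weighted norm again only yields $\sum_k\Gamma_k(t)\le 1$ by Fatou. A cleaner route is already implicit in your own argument: your weighted-norm Gr\"onwall estimate for $\|\Delta\|$ uses only that the components lie in $[0,1]$, not that they sum to one, so uniqueness actually holds in the larger class of $[0,1]^\N$-valued solutions of~\eqref{MM1NL}. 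Since the McKean--Vlasov SDE~\eqref{MM1MV} furnishes one $\mathcal P(\N)$-valued solution, every subsequential limit must coincide with it, and mass preservation follows for free.
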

\begin{proof}
  Note that the process $(y_k^C(t))$ can be seen as a version of the empirical distribution process of empty places in the nodes of the network with the slowed down time scale $t{\mapsto}t/C$.

It is not difficult to check that $(y_k^C(t))$ satisfies the following system of ODEs
\begin{equation}\label{GHK3}
\begin{cases}
\displaystyle    \frac{\diff}{\diff t} y^C_{0}(t) =   \nu h\left(y^C_{0}(t)\right)y^C_{{1}}(t)-  y^C_{0}(t),\vspace{2mm}\\
\displaystyle  \frac{\diff}{\diff t} y^C_{k}(t) {=}  \nu h\left(y^C_{0}(t)\right)y^C_{{k+1}}(t){+} \left(1{-}\frac{k{+}1}{C}\right) y^C_{{k-1}}(t)\\
\hspace{55mm}  \displaystyle {-}\left( \nu h\left(y^C_{0}(t)\right){+}\left(1{-}\frac{k}{C}\right)\right)y^C_{k}(t),\vspace{2mm}\\
  \displaystyle   \frac{\diff}{\diff t} y^C_{C}(t)  = \frac{1}{C}y^C_{1}(t)-  \nu h\left(y^C_{0}(t)\right)y^C_{C}(t).
\end{cases}
\end{equation}
The proposition  is proved with a classical  consequence of compactness-uniqueness argument: the sequence of functions $(y_k^C(t))$, $C{\ge}1$ is tight and any limiting point satisfies Relations~\eqref{GHK3}. If this system of equations has a unique solution, then the convergence is established since all subsequences have a subsequence converging to this limit. See Chapter~10 and~11 of Ethier and Kurtz~\cite{Ethier} for example.

The tightness is due to the Arzel\`a-Ascoli Theorem, see Theorem~7.2 of Billingsley~\cite{Billingsley} for example. Relations~\eqref{GHK3} show that for any $k{\in}\N$, the sequence of functions $(y^C_k(t))$ is equicontinuous and therefore is relatively compact for the uniform norm on bounded intervals. The integral form of the ODEs~\eqref{GHK3} shows that any limiting point satisfies Relation~\eqref{MM1NL}.

Let $(x_k(t))$ and $(y_k(t))$ be two solutions of Relation~\eqref{MM1NL} with the same initial condition. Using the fact that the function $h$ is lipshitz on $[0,1]$ with parameter $6$,  the integral form of Relation~\eqref{MM1NL} gives the inequality, for $(u_k(t)){=}(x_k(t){-}y_k(t))$ and $t{>}0$,
\begin{multline*}
|u_k(t)|\leq 6\nu\int_0^t |u_0(s)|(x_k(s){+}x_{k+1}(s))\,\diff s \\+6\nu \int_0^t  (|u_{k-1}(s)|+|u_{k}(s)|{+}|u_{k+1}(s)|)\,\diff s,
\end{multline*}
since $(x_k(t))$ and $(y_k(t))$ are probability distributions on $\N$, then $(u_k(t))$ is a convergent series and
\[
\sum_{k=0}^n |u_k(t)|\leq 30\nu \int_0^t  \sum_{k=0}^{n+1} |u_k(t)|\,\diff s,
\]
so that
\[
U(t){\steq{def}}\sum_{k=0}^{+\infty} |u_k(t)|\leq 30\nu \int_0^t  U(s)\,\diff s.
\]
Gr\"onwald's Inequality gives the relation $(U(t)){=}(0)$, i.e.\  the uniqueness of the solution of Equation~\eqref{MM1NL}, and, consequently, the desired convergence. 

\end{proof}

A probabilistic translation of this result can be stated  as the fact that if the process  $(\overline{L}_C(t))$, defined by Relation~\eqref{GHKMV} satisfies the relation 
\[
\lim_{C\to+\infty} C{-}\overline{L}_C(0)=q_0,
\]
for the convergence in distribution, then, 
\[
\lim_{N\to+\infty} (Q_C(t)){\steq{def}}(C{-}\overline{L}_C(t/C))=(\overline{Q}(t)),
\]
where  $(\overline{Q}(t))$ is the  solution of the McKean-Vlasov SDE
\begin{equation}\label{MM1MV}
  \diff \overline{Q}(t)={\cal N}_1\left(\diff t\right)-\ind{\overline{Q}(s{-}){>}0}{\cal P}_1\left(\left[0,\nu h\left(\P\left(\overline{Q}(t){=}0\right)\right)\right]{\times}\diff t\right),
\end{equation}
with $\overline{Q}(0){\steq{dist}}q_0$, where ${\cal N}_1$ and ${\cal P}_1$ are independent homogeneous Poisson processes with rate  $1$ on $\R_+$ and $\R_+^2$ respectively. 

The process $(\overline{Q}(t))$ is a non-linear $M/M/1$ queue with the jump rates at time $t$
\begin{equation}\label{MM1JR}
\begin{cases}
  {+}1& 1\\
  {-}1& \nu h\left(\P\left(\overline{Q}(t){=}0\right)\right).
\end{cases}
\end{equation}
It should be noted that this scaling is convenient to study the  regimes where the number of empty places is small, i.e.\ when the system has some level of saturation.  Section~\ref{2StabSec} studies the case when there is an equilibrium regime with a large number of empty places. 
\begin{remark}
An invariant distribution $\overline{\pi}$ of the non-linear Markov process $(\overline{Q}(t))$  defined by Relation~\eqref{MM1MV} is the invariant distribution of an $M/M/1$ queue with arrival rate $1$ and service rate $\nu h(\overline{\pi}(0))$, $\overline{\pi}$ is thus a geometric distribution with parameter $1/(\nu h(\overline{\pi}(0)))$, in particular
  \[
  \overline{\pi}(0)=1{-}\frac{1}{\nu h(\overline{\pi}(0))},
  \]
  which is Relation~\eqref{eq2-2}, as can be  expected.
\end{remark}
\subsection{Stability of the Underloaded Regime}\label{2StabSec}
\addcontentsline{toc}{section}{\thesubsection \hspace{3.5mm} Stability of the Underloaded Regime}
In Theorem~\ref{ThNbFP}, we have seen that, under the condition  $\nu{<}1$, there is a root of the fixed point equation~\eqref{eqfp} that is arbitrarily close to $0$ as $C$ gets large. This result suggests that the stability of the underloaded regime for the dynamical system~\eqref{GHK}. In this regime  most of requests are accepted at their arrival node. The following proposition gives a formal characterization of this property.
\begin{proposition}[Stability of Underloaded Regime]\label{Stab2prop}
 If $\lambda{=}\nu C$, for some $\nu{<}1$, there exists $\eta{\in}(0,1)$ such that if the initial state of Dynamical System~\eqref{GHK} satisfies the relation
  \[
  \lim_{C\to{+}\infty} \sum_{k{\ge}\eta C}x_k^C(0)=0,
  \]
then there exists $\eta^*{\in}(0,1)$ such that
  \begin{equation}\label{StabUnder}
  \lim_{C{\to}{+}\infty} \sup_{t{\ge}0} \left(\sum_{k{\ge}\eta^* C} x_k^C(t)\right)=0.
  \end{equation}
\end{proposition}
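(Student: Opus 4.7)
The plan is to exploit the McKean--Vlasov interpretation of Proposition~\ref{propscale}: the vector $(x_k^C(t))_k$ is the law of a (non-linear) Markov queue $(\overline{L}_C(t))$ with arrival intensity $\nu C h(x_C^C(t))$ and per-customer service rate one. Because $h(0){=}1$ and $\nu{<}1$, as long as the saturation fraction $x_C^C(t)$ is kept below some small threshold $\eta_1$, one can pathwise dominate $(\overline{L}_C(t))$ by a genuine linear $M/M/\infty$ queue with a constant, subcritical arrival rate $\alpha_1 C{<}C$; a bootstrap argument will then propagate the resulting Chernoff-type tail bound to all times.

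Concretely, I would pick $\eta_1{>}0$ small enough to have simultaneously $\eta_1{<}\nu$ and $\alpha_1{\steq{def}}\nu h(\eta_1){<}1$, which is possible because $h$ is continuous with $h(0){=}1$. Fix $\eta^{*}{\in}(\alpha_1,1)$ and take $\eta{=}\eta_1$ in the statement. Introduce the deterministic stopping time
\[
\tau_C\steq{def}\inf\{t{\geq}0:x_C^C(t){\geq}\eta_1\},\qquad \inf\emptyset{=}{+}\infty.
\]
On $[0,\tau_C)$ the arrival intensity of $(\overline{L}_C(t))$ is at most $\alpha_1 C$, and a pathwise coupling driven by a common pair of marked Poisson random measures produces an $M/M/\infty$ process $(Q_C(t))$ with arrival rate $\alpha_1 C$, service rate one, and $Q_C(0){=}\overline{L}_C(0)$, such that $\overline{L}_C(t){\le}Q_C(t)$ for every $0{\le}t{<}\tau_C$. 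Conditionally on $Q_C(0){=}m_0$, the random variable $Q_C(t)$ has the distribution of the independent sum $\mathrm{Bin}(m_0,e^{-t}){+}\mathrm{Poi}(\alpha_1 C(1{-}e^{-t}))$, whose moment generating function is dominated by $\exp((m_0 e^{-t}{+}\alpha_1 C(1{-}e^{-t}))(e^{s}{-}1))$. When $m_0{\le}\eta_1 C{\le}\alpha_1 C$, the exponent is at most $\alpha_1 C(e^{s}{-}1)$ uniformly in $t$, and a standard Chernoff estimate provides a constant $J{=}J(\eta^{*},\alpha_1){>}0$ with
\[
\P(Q_C(t){\ge}\eta^{*}C\mid Q_C(0){=}m_0)\leq e^{-CJ},\qquad \forall t{\ge}0.
\]
Integrating over the initial distribution and using the hypothesis $\P(\overline{L}_C(0){\ge}\eta_1 C){\to}0$ then yields $\sup_{t\ge 0}\P(Q_C(t){\ge}\eta^{*}C){\to}0$ as $C{\to}{+}\infty$.

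For the bootstrap step, the coupling transfers the bound to $\overline{L}_C$ on $[0,\tau_C)$, so
\[
\sup_{0\le t<\tau_C}\sum_{k\ge\eta^{*}C}x_k^C(t)=\sup_{0\le t<\tau_C}\P(\overline{L}_C(t){\ge}\eta^{*}C)\xrightarrow[C\to{+}\infty]{}0.
\]
If $\tau_C{<}{+}\infty$ for arbitrarily large $C$, continuity of $(x_C^C(t))$ forces $x_C^C(\tau_C){=}\eta_1$; since $\eta^{*}{\le}1$ we also have $x_C^C(\tau_C){\le}\sum_{k\ge\eta^{*}C}x_k^C(\tau_C)$, and by continuity of the flow this right-hand side is bounded above by $\sup_{0\le t<\tau_C}\sum_{k\ge\eta^{*}C}x_k^C(t)$, which vanishes as $C{\to}{+}\infty$ by the preceding display. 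This contradicts $\eta_1{>}0$, so $\tau_C{=}{+}\infty$ for all $C$ sufficiently large, which is precisely Relation~\eqref{StabUnder}. The main difficulty lies in the time-uniformity of the tail bound: without the cutoff $\tau_C$ the nonlinear intensity $\nu C h(x_C^C(t))$ could a priori drift into a supercritical regime, and the observation making the bootstrap close is that the Chernoff estimate above is flat in $t$ whenever $m_0{\le}\alpha_1 C$, a condition that the cutoff preserves.
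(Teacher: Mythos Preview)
Your argument is correct and follows the same overall skeleton as the paper's proof: freeze the nonlinear intensity below a subcritical level by a deterministic cutoff, dominate $(\overline{L}_C(t))$ pathwise by a linear queue with constant arrival rate, extract a uniform-in-time tail bound, and bootstrap to show the cutoff is never reached for large $C$. The difference is in the dominating process and the tail estimate. The paper couples $(\overline{L}_C(t))$ with the process $\eta_1 C{+}Q(Ct)$, where $(Q(t))$ is an $M/M/1$ queue with arrival rate $\eta_0{=}\nu(1{+}a\delta_0)$ and service rate $\eta_1{>}\eta_0$ started at $0$; the tail bound comes from stochastic monotonicity toward the stationary geometric law, $\P(Q(\infty){\ge}(\eta_2{-}\eta_1)C){\le}(\eta_0/\eta_1)^{\lfloor(\eta_2{-}\eta_1)C\rfloor}$. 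You instead couple with an $M/M/\infty$ queue at full arrival rate $\alpha_1 C$ started from the same random initial point, and obtain the tail bound from the explicit Binomial${+}$Poisson representation and a Chernoff inequality. Your approach is slightly more self-contained (no appeal to monotonicity toward stationarity or to queueing-theory references), and your bootstrap via the explicit time $\tau_C$ is cleaner than the paper's somewhat implicit ``as long as $x_C^C(t){\le}\delta_0$'' justification. The paper's $M/M/1$ comparison, on the other hand, localizes the coupling near the boundary (only the departure rate above level $\eta_1 C$ matters), which is perhaps closer in spirit to the reflected-random-walk picture used elsewhere in the paper. Both routes yield the same exponential-in-$C$ decay, and either is a valid proof of the proposition.
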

\begin{proof}
We fix $\delta_0{>}0$ such that $\eta_0{=}\nu(1{+}a\delta_0){\in}(\eta,1)$, and $\eta_1$, $\eta_2{\in}(\eta_0,1)$ with $\eta_1{<}\eta_2$.
For $\eps{<}\delta_0/2$, we take $C_0$, such that
  \[
  \sum_{k{\geq} \eta_0C_0} x_k^C(0)\leq \eps \text{ and } \left(\frac{\eta_0}{\eta_1}\right)^{\lfloor (\eta_2{-}\eta_1)C_0\rfloor}\leq \eps.
  \]
Let $(Q(t))$ be an $M/M/1$ queue with arrival rate $\eta_0$ and service rate $\eta_1$ with $Q(0){=}0$ and $(\overline{L}_C(t))$ the processes defined by Equation~\eqref{GHKMV}. By stochastic monotonicity of $t{\mapsto}Q(t)$, which can be seen with a simple coupling of $(Q(t))$ with a stationary version of the process $(Q(t))$. See the proof of Proposition~5.8 of Robert~\cite{Robert} for example. 
We have, for $C{\ge}C_0$,
\[
\P\left(Q(t) \geq (\eta_2{-}\eta_1)C\right)\\ \leq \P(Q(\infty)\geq  (\eta_2{-}\eta_1)C) \leq \left(\frac{\eta_0}{\eta_1}\right)^{\lfloor (\eta_2{-}\eta_1)C\rfloor}\leq \eps,
\]
where $Q(\infty)$ is a geometrically distributed random variable with parameter $\eta_0/\eta_1$. If $(Q_1(t)){\steq{def}}\eta_1 C{+} Q(t)$, then, for all $t{\ge}0$,
\[
\P(Q_1(t)\geq \eta_2 C)\leq \eps\leq \frac{\delta_0}{2}.
\]
Note that $(Q_1(C t))$ is a birth and death Markov process with birth rate $\eta_0 C$ and death rate $\eta_1 C$.

We can construct a coupling of the process $(\overline{L}_C(t))$ and $(Q(t))$ such that, for all $t{\ge}0$, the relation
\[
\overline{L}_C(t)\le Q_1(Ct)
\]
holds conditionally on the event $\{\overline{L}_C(0){<} \eta_0 C\}$.   This is a simple consequence of the fact that, as long as the relation $x^C_C(t){=}\P(\overline{L}_C(t){=}C){\le} \delta_0$ holds,  the input rate of $(\overline{L}_C(t))$ is smaller than $\eta_0 C$ and when $\overline{L}_C(t){>}\eta_1 C$, the departure rate is at least $\eta_1 C$.
We obtain that, for $C{\ge}C_0$ and all $t{\ge}0$,
\begin{multline*}
\sum_{k{\ge}\eta_2C} x_k^C(t){=}\P\left(\overline{L}_C(t){\ge}\eta_2C\right)\le  \P\left(\overline{L}_C(t){\ge}\eta_2 C|\overline{L}_C(0){\le} \eta_0 C\right){+}\eps\\ \leq  \P\left(Q_1(Ct) \geq \eta_2 C\right){+}\eps\leq 2\eps.
\end{multline*}
The proposition is proved.
\end{proof}
\subsection{Non-Linear $M/M/1$ Queues}\label{nlMM1}
\addcontentsline{toc}{section}{\thesubsection \hspace{3.5mm} Non-Linear $M/M/1$ Queues}

The  Relations~\eqref{MM1NL} defining the asymptotic process $(\Gamma(t))=(\Gamma_k(t),k{\in}\N)$ can be written in a vectorial form as, for any function $f{:}\N{\to}\R_+$ with finite support, 
\begin{equation}\label{MNL}
\frac{\diff }{\diff t} \croc{\Gamma(t),f} =\croc{\Gamma(t),\nabla^+(f)}  + \nu h(\Gamma(t)(0))\croc{\Gamma(t),\nabla^-(f)},
\end{equation}
with
\[
\nabla^+(f)(x)\steq{def}f(x{+}1){-}f(x), \quad \nabla^-(f)(x)\steq{def}\left(f(x{-}1){-}f(x)\right)\ind{x{>}0}.
\]
In Relations~\eqref{MM1NL}, we had $h(x){=}1{+}ax(1{-}x)$ with $a{>}1$.

For the moment we consider a general function $h$ which is  continuously differentiable  from $[0,1]$ to $[1,{+}\infty)$, and we assume that $\nu{>}1$. If there is an equilibrium $\pi{\in}{\cal P}(\N)$ for  the dynamical system~\eqref{MNL}, it is the equilibrium of the linear Markov process $(\Gamma_S(t))$, where $S{=}\pi(0)$, 
\begin{equation}\label{ML}
\frac{\diff }{\diff t} \croc{\Gamma_S(t),f} =\croc{\Gamma_S(t),\nabla^+(f)}  + \nu h(S)\croc{\Gamma_S(t),\nabla^-(f)}.
\end{equation}
As noted before, this is a classical $M/M/1$ queue which is ergodic since the service rate $\nu h(S)$ is greater than $1$, the arrival rate, by assumption. We have therefore a representation for the invariant distribution
\begin{equation}\label{2pir}
  \pi_{S}(n)\steq{def}\left(\frac{1}{\nu h(S)}\right)^n\left(1{-}\frac{1}{\nu h(S)}\right), \quad n{\in}\N,
\end{equation}
and, consequently, the fixed point equation for $S$,
\begin{equation}\label{2PirFix}
S=1{-}\frac{1}{\nu h(S)},
\end{equation}
which we have already seen, see Equation~\eqref{polfp}.  It is well-known that, for the  standard $M/M/1$ process $(\Gamma_S(t))$,   for any initial condition $\Gamma_S(0)\in{\cal P}(\N)$, the inequality
 \begin{equation}\label{e308}
 \frac{\diff}{\diff t}\|\Gamma_S(t){-}\pi_S\|_2^2\le {-}2\kappa_S \|\Gamma_S(t){-}\pi_S\|^2
 \end{equation}
holds for all $t{\ge}0$, where $\kappa_S$, the spectral gap of the process, has the explicit representation
\[
 \kappa_S =\left(\sqrt{\nu h(S)}{-}1\right)^2. 
 \]
 See Chen~\cite{Chen} and Liu and Ma~\cite{Liu} for example. 

 \begin{theorem}\label{theoNLMM1}
If $h{:}[0,1]{\to}[1,{+}\infty)$ is a $C^1${-}function and $S{\in}(0,1)$ is  a solution of Equation~\eqref{2PirFix}   such that
\begin{equation}\label{e310}
|\dot{h}(S)|< \frac{1}{\nu S}\left(\frac{1}{\sqrt{1{-}S}}{-}1\right)^2,
 \end{equation} 
then the probability distribution  $\pi_S$ defined by Relation~\eqref{2pir} is an exponentially stable  equilibrium point of the dynamical system defined by~\eqref{MNL}: There exist positive constants $q$ and $\eps$ such that if $\|\Gamma(0){-}\pi_S\|_2\le\eps$, the relation
\[
\|\Gamma(t){-}\pi_S\|_2^2\le \|\Gamma(0){-}\pi_S\|_2^2\cdot e^{{-}q t},  
\]
holds for all $t{\ge}0$.
 \end{theorem}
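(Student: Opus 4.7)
The plan is to adapt the splitting-and-bootstrap argument used for Theorem~\ref{t138} to the infinite-state non-linear $M/M/1$ dynamical system~\eqref{MNL}, exploiting the explicit geometric form $\pi_S(n){=}(1{-}S)^n S$ of the equilibrium from~\eqref{2pir} together with the spectral gap inequality~\eqref{e308}. Set $F(t){\steq{def}}\|\Gamma(t){-}\pi_S\|_2^2$ and compare, at $t{=}0$, the non-linear flow with the linear flow $(\Gamma_S(t))$ of~\eqref{ML} started from the same state $\Gamma_S(0){=}\Gamma(0)$. Subtracting~\eqref{ML} from~\eqref{MNL} pointwise yields, as the direct analogue of identity~\eqref{e120} of Step~3 of Theorem~\ref{t138},
\[
\Gamma'(0)(z)=\Gamma_S'(0)(z)+L\bigl[\Gamma(0)(z{+}1){-}\ind{z>0}\Gamma(0)(z)\bigr],
\]
with $L{\steq{def}}\nu\bigl(h(\Gamma(0)(0)){-}h(S)\bigr)$. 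Splitting $F'(0){=}\mathcal{I}_1{+}\mathcal{I}_2$ accordingly, Inequality~\eqref{e308} applied to $(\Gamma_S(t))$ gives $\mathcal{I}_1{\le}{-}2\kappa_S F(0)$, where the fixed-point Relation~\eqref{2PirFix} gives $\nu h(S){=}1/(1{-}S)$ and hence $\kappa_S{=}(1/\sqrt{1{-}S}{-}1)^2$.

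Bounding $\mathcal{I}_2$ is the core of the argument, in the spirit of Steps~4 and~5 of Theorem~\ref{t138}. First, the $C^1$ regularity of $h$ together with the Cauchy-Schwartz bound $|\Gamma(0)(0){-}S|{\le}\sqrt{\pi_S(0)}\,\|\Gamma(0){-}\pi_S\|_2{=}\sqrt{S\,F(0)}$ (using $\pi_S(0){=}S$) yields $|L|{\le}\nu|\dot h(S)|\sqrt{S\,F(0)}\,(1{+}o(1))$ as $F(0){\to}0$. Second, decomposing $\Gamma(0){=}\pi_S{+}(\Gamma(0){-}\pi_S)$ inside $\Gamma(0)(z{+}1){-}\ind{z>0}\Gamma(0)(z)$ produces three sums $\mathcal{J}_1,\mathcal{J}_2,\mathcal{J}_3$ analogous to those of Step~5. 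Using the geometric ratio $\pi_S(z{+}1)/\pi_S(z){=}1{-}S$, Cauchy-Schwartz bounds $|\mathcal{J}_1|{\le}\sqrt{1{-}S}\,F(0)$ and $|\mathcal{J}_3|{\le}F(0)$; a direct computation with $\pi_S(n){=}(1{-}S)^n S$ shows
\[
\sum_{z\ge 0}\frac{\bigl(\pi_S(z{+}1){-}\ind{z>0}\pi_S(z)\bigr)^2}{\pi_S(z)}=S(1{-}S),
\]
whence $|\mathcal{J}_2|{\le}\sqrt{S(1{-}S)\,F(0)}$. Combining, the dominant term of $|\mathcal{I}_2|/F(0)$ as $F(0){\to}0$ comes from $|L|\,|\mathcal{J}_2|$ and equals $2\nu|\dot h(S)|\,S\sqrt{1{-}S}$; since $\sqrt{1{-}S}{<}1$, Assumption~\eqref{e310} is more than sufficient to guarantee that this limit is strictly less than $2\kappa_S$, so that $\mathcal{I}_2{\le}\theta F(0)$ for some $\theta{<}2\kappa_S$ whenever $\|\Gamma(0){-}\pi_S\|_2$ is small enough.

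With the instantaneous decay $F'(0){\le}{-}qF(0)$, $q{\steq{def}}2\kappa_S{-}\theta{>}0$, in hand, the bootstrap argument of Step~2 of Theorem~\ref{t138} applies verbatim: the constants $\eps$ and $q$ depend only on $\pi_S$, so the inequality can be iterated at any later $s{\ge}0$ for which $\|\Gamma(s){-}\pi_S\|_2{\le}\eps$, and Gr\"onwall then propagates this condition, and the exponential decay, to all $t{\ge}0$. I expect the main obstacle to be technical rather than structural: one must verify that all the $\ell^2$-sums over $\N$ converge---ensured by the geometric tails $\pi_S(n){=}(1{-}S)^n S$ once $\nu h(S){>}1$, the same fact underlying the $M/M/1$ spectral gap---and one must align the leading order of $\mathcal{I}_2$ carefully with $2\kappa_S$ through the identity $\nu h(S){=}1/(1{-}S)$ so that Assumption~\eqref{e310} genuinely closes the bootstrap.
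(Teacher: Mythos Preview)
Your proposal is correct and follows essentially the same route as the paper: split $F'(0)$ into the linear $M/M/1$ contribution controlled by the spectral gap~\eqref{e308} and a non-linear remainder, decompose the latter into three Cauchy--Schwartz-estimable pieces using the geometric form of $\pi_S$, and close with the bootstrap of Theorem~\ref{t138}. Your splitting identity carries the indicator $\ind{z>0}$ (consistent with~\eqref{MM1NL}) and you keep the factor $\sqrt{S}$ in $|\Gamma(0)(0){-}S|{\le}\sqrt{S\,F(0)}$, which makes your leading constant $2\nu|\dot h(S)|S\sqrt{1{-}S}$ slightly sharper than the paper's $2\nu|\dot h(S)|S$; as you note, Assumption~\eqref{e310} then closes the argument with room to spare.
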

\begin{proof}
Identity~\eqref{MNL} gives
\[
\dot{\Gamma}(t)(x) = \left(\rule{0mm}{4mm}\Gamma(t)(x{-}1){-}\Gamma(t)(x)\right)\ind{x{>}0}{+} \nu h(\Gamma(t)(0))\left(\rule{0mm}{4mm}\Gamma(t)(x{+}1){-}\Gamma(t)(x)\right),
\]
in the same way, with Relation~\eqref{ML},  $({\Gamma}_S(t))$  is defined as the solution of 
\[
\dot{\Gamma}_S(t)(x) = \left(\rule{0mm}{4mm}\Gamma_S(t)(x{-}1){-}\Gamma_S(t)(x)\right)\ind{x{>}0}{+} \nu h(S)\left(\rule{0mm}{4mm}\Gamma_S(t)(x{+}1){-}\Gamma_S(t)(x)\right),
\]
with the same initial conditions $\Gamma_S(0){=}\Gamma(0)$. We have
\[
\dot{\Gamma}(0)(x)=\dot{\Gamma}_S(0)(x) + \nu(h(\Gamma(0)(0)){-}h(S))\left(\rule{0mm}{4mm}\Gamma(0)(x{+}1){-}\Gamma(0)(x)\right).
\]
As in Section~\ref{sec1-Spec}, introducing
\[
F(t){\steq{def}}\|\Gamma(t){-}\pi_S\|_2^2,
\]
we have
\begin{multline}\label{e314}
\dot{F}(0)=2\sum_{x{\in}\N} \frac{[\Gamma_S(0){-}\pi_S](x)\dot{\Gamma}_S(0)(x)}{\pi_S(x)}\\
+2\nu (h(\Gamma(0)(0)){-}h(S))\sum_{x\in\N} \frac{[\Gamma(0){-}\pi_S](x)(\Gamma(0)(x{+}1){-}\Gamma(0)(x))}{\pi_S(x)}\\
\steq{def}{\cal I}_1+2\nu (h(\Gamma(0)(0)){-}h(S))\cdot {\cal I}_2.
\end{multline}
By Relation~\eqref{e308} and the fact that $(\Gamma_S(t))$ and $(\Gamma(t))$ have the same initial condition, we get 
\begin{equation}\label{e313}
\frac{{\cal I}_1}{2}\le {-}2\kappa_S F(0)={-}2\left(\sqrt{\nu h(S)}{-}1\right)^2F(0).
\end{equation}
Furthermore, we have
\begin{align*}
  {\cal I}_2&\steq{def}\sum_{x\in\N}\frac{[\Gamma(0){-}\pi_S](x)(\Gamma(0)(x{+}1){-}\Gamma(0)(x))}{\pi_S(x)}\\
  &=\sum_{x\in\N}\frac{[\Gamma(0){-}\pi_S](x)(\pi_S(x{+}1){-}\pi_S(x))}{\pi_S(x)}\\
&\hspace{3cm} + \sum_{x\in\N}\frac{[\Gamma(0){-}\pi_S](x)[\Gamma(0){-}\pi_S](x{+}1)}{\pi_S(x)}{-}\|\Gamma(0){-}\pi_S\|_2^2.
\end{align*}
Cauchy-Schwartz Inequality and Relation~\eqref{2pir} give
\begin{align*}
  |{\cal I}_2|&\le \left(\sum_{x\in\N} \frac{(\pi_S(x{+}1){-}\pi_S(x))^2}{\pi_S(x)}\right)^{1/2}\hspace{-5mm}\|\Gamma(0){-}\pi_S\|_2 \\
  &\hspace{3cm}+ \sup_{x\in\N}\left(\sqrt \frac{\pi_S(x{+}1)}{\pi_S(x)}{+}1\right)\|\Gamma(0){-}\pi_S\|_2^2 \\
&=\left(1{-}\frac{1}{\nu h(S)}\right)\|\Gamma(0){-}\pi_S\|_2+ \left(\frac{1}{\sqrt{\nu h(S)}}{+}1\right)\|\Gamma(0){-}\pi_S\|_2^2.
\end{align*}
Combining this  with Relation~\eqref{2PirFix} and  Inequalities~\eqref{e314} and~\eqref{e313} and the fact that $|\Gamma(0)(0){-}S|{\le}\|\Gamma(0){-}\pi_S\|_2$, we obtain that, if $F(0){>}0$,
\[
\frac{\dot{F}(0)}{2F(0)}
\leq {-}\kappa_S +\nu \frac{|h(\Gamma(0)(0)){-}h(S)|}{|\Gamma(0)(0){-}S|}\left(S+ \left(\frac{1}{\sqrt{\nu h(S)}}{+}1\right)\sqrt{F(0)}\right),
\]
hence
\[
\limsup_{\eps\to 0} \sup_{F(0){\in}(0,\eps^2)} \frac{\dot{F}(0)}{F(0)}\leq -q
=2\left({-}\left(\sqrt{\nu h(S)}{-}1\right)^2+\nu |h'(S)|S\right)<0,
\]
by Assumption~\eqref{e310}. To complete the proof, it remains to apply a similar  bootstrap argument as in the proof of Theorem~\ref{t138} to get the above inequality for the ratio $\dot{F}(s)/F(s)$. The theorem is proved. 
 \end{proof}
We now apply this result to the asymptotic dynamical system of the DAR algorithm. 
\begin{corollary}
  When $h(x) {=} 1{+}ax(1{-}x)$ with $a{>}1$, there exists a neighborhood $I$ of $u_a{\steq{def}}8/(4{+}a)$ such that, if $\nu{\in}I$, then the unique fixed point of the dynamical system~\eqref{MM1NL} of the non-linear $M/M/1$ queue is  exponentially stable.
\end{corollary}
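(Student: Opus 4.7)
The approach is to apply Theorem~\ref{theoNLMM1} at the privileged value $\nu{=}u_a$ at which the stability condition simplifies drastically, and then propagate the inequality by continuity to a neighborhood.

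First I would verify that at $\nu{=}u_a{=}8/(4{+}a)$ the point $S_0{=}1/2$ solves the fixed-point equation~\eqref{2PirFix}. Since $h(1/2){=}1{+}a/4{=}(4{+}a)/4$, one has
\[
(1{-}S_0)h(S_0)=\tfrac{1}{2}\cdot\tfrac{4{+}a}{4}=\tfrac{4{+}a}{8}=\tfrac{1}{u_a},
\]
which is exactly Equation~\eqref{2PirFix}. The key observation is that $h'(x){=}a(1{-}2x)$ vanishes at $S_0{=}1/2$, so the stability criterion~\eqref{e310} reads
\[
0<\frac{2}{u_a}\bigl(\sqrt{2}{-}1\bigr)^2,
\]
which holds with strict inequality. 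In fact $u_a$ is precisely the value of $\nu$ that forces the vanishing of $h'$ to coincide with the fixed-point locus, which is what makes this choice the sweet spot of the argument.

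Second, to get a branch of fixed points depending continuously on $\nu$, I would apply the implicit function theorem to $G(\nu,z){\steq{def}}\nu(1{-}z)h(z){-}1$. Since
\[
\partial_zG(u_a,\tfrac12)=u_a\bigl({-}h(\tfrac12){+}\tfrac12 h'(\tfrac12)\bigr)={-}u_a\cdot\tfrac{4{+}a}{4}\ne 0,
\]
there is an open interval $I_0\ni u_a$ and a $C^1$ map $\nu\mapsto S(\nu)$ with $S(u_a){=}1/2$ and $G(\nu,S(\nu)){=}0$ on $I_0$. Both $S(\nu)$ and $h'(S(\nu))$ vary continuously with $\nu$, so both sides of~\eqref{e310} are continuous functions of $\nu$ along this branch, and the strict inequality persists on a possibly smaller neighborhood $I\subset I_0$ of $u_a$. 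Moreover $\nu h(S(\nu))$ stays close to $2$, hence $>1$, so that $\pi_{S(\nu)}$ defined by Relation~\eqref{2pir} is a genuine geometric probability distribution on $\N$.

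Third, for each $\nu\in I$, Theorem~\ref{theoNLMM1} applies at $S{=}S(\nu)$ and yields exponential stability of $\pi_{S(\nu)}$. Regarding uniqueness: when $a{<}4$ one has $u_a{>}1$ and Theorem~\ref{ThNbFP}(3) gives global uniqueness of the fixed point in $(0,1)$ for $\nu$ near $u_a$, so $I$ can be shrunk accordingly; in general $S(\nu)$ is the only fixed point on the ergodic branch $\nu h(S){>}1$ relevant to~\eqref{MM1NL}. I do not anticipate a substantial obstacle here since, once the distinguished value $\nu{=}u_a$ is identified, everything reduces to continuity and a direct appeal to Theorem~\ref{theoNLMM1}.
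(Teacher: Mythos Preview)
Your approach is essentially the same as the paper's: both identify $S{=}1/2$ at $\nu{=}u_a$ as the point where $h'(S){=}a(1{-}2S)$ vanishes, so that Condition~\eqref{e310} holds strictly, and then extend by continuity to a neighborhood. You are more explicit about the continuity step (invoking the implicit function theorem), whereas the paper simply rewrites~\eqref{e310} as $aS|1{-}2S|{<}(1{-}\sqrt{1{-}S})^2(1{+}aS(1{-}S))$ and remarks that $S{=}1/2$ satisfies it.

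One small caveat: your final sentence asserts that ``in general $S(\nu)$ is the only fixed point on the ergodic branch $\nu h(S){>}1$,'' but this is not true when $a{>}4$. In that case $u_a{\in}(\nu_a,1)$ and Equation~\eqref{2PirFix} has \emph{two} solutions with $\nu h(S){>}1$ (see Theorem~\ref{ThNbFP}(2) and the remark following the corollary in the paper). The corollary's phrase ``the unique fixed point'' is itself imprecise for $a{>}4$; the paper acknowledges this in the subsequent remark, noting that the corollary then only establishes stability of one of the two fixed points. This does not affect your stability argument, but you should drop that uniqueness claim.
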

\begin{proof}
The fixed point equation~\eqref{2PirFix} is 
\begin{equation}\label{epf}
(1{-}S)(1{+}aS (1{-}S))=\frac{1}{\nu}
\end{equation}
and since $h'(x){=}a\nu(1{-}2x)$,  Condition~\eqref{e310} is equivalent to 
\begin{equation}\label{epf2}
  aS |1{-}2S| < \left(1{-}\sqrt{1{-}S}\right)^2  (1{+}aS(1{-}S)).
\end{equation}
To conclude, note that $S{=}1/2$ satisfies the condition.
\end{proof}
\noindent
{\bf Remarks.}
\begin{enumerate}
\item It easy to check that, for $a{>}4$, $u_a{\in}(\nu_a,1)$, where $\nu_a$ is defined by Relation~\eqref{lambdaa}. In this case there are two positive fixed points for the asymptotic dynamical system, the above corollary  gives that one of them is locally stable.  We have not been able to prove that, as Gibbens et al.~\cite{Gibbens} suggest, see the claim page~\pageref{GHKQ}, that the other one is not stable. 
\item A little more  work can give more precise conditions on $\nu$ for the stability of the fixed point. 
Let $x{=}\sqrt{1{-}S}$, if $S{\in}[0,1/2]$, Condition~\eqref{epf2} amounts to 
\[
P_1(x)\steq{def}ax^{5}{-}ax^{4}{-}3a{x}^3 {-}ax^{2}{+} (a{-}1) x{+}a{+}1>0.
\]
Notice that  $P_1(\sqrt{2}/2){=}(2{-}\sqrt{2})(1{+}a/4)/2$ and $P_1(1){=}{-}2a$.
If $S{\in}[1/2,1]$,  the condition is 
\[
P_2(x)\steq{def}ax^5{-}a{x}^{4}{+}a{x}^{3}{+}3a{x}^{2}{-}(1{+}a)x{+}1{-}a>0,
\]
with  $P_2(0){=}1{-}a{<}0$ and $P_2(\sqrt{2}/2){=}P_1(\sqrt{2}/2){>}0$. It is not difficult to check that $P_1$ [resp. $P_2$] is concave  [resp. convex] on $[0,1]$, hence there exists a unique root $z_{a,1}$ of $P_1$ in $(\sqrt{2}/2,1)$ [resp.  $z_{a,2}$ of $P_2$ in $(0,\sqrt{2}/2)$].

Hence Condition~\eqref{e310} is satisfied when $S{\in}(1{-}z_{a,1}^2,1{-}z_{a,2}^2)$ and, by Relation~\eqref{epf}, this holds if $\nu{\in}(Q(z_{a,2}),Q(z_{a,1})$ with 
$Q(z){=}1/[z^2(1{+}az^2{-}az^4)]$.
\item For the precise case of Gibbens, Hunt and Kelly~\cite{Gibbens}, $h(x){=}(1{+}2x(1{-}x))$, this gives that when  $\nu{\in}(1.2068,1.5978)$, the unique fixed point  is a locally stable equilibrium.

\end{enumerate}

\medskip

\noindent
{\bf A toy example with an arbitrary number $n$ of stable equilibrium points.}\\  We fix $(u_k)$, $n$ distinct points of $(0,1)$. Let $f$ be a $C^1$-function such that, for $1{\le}k{\le}n$, the relation $f(u){=} 1 {+} \ln(1{-}u) {-} \ln(1{-}u_k)$ holds in a small neighborhood of $u_k$ for any $k\in{1, \ldots, n}$. Note that since $f(u_k){=}1{>}1{-}u_k$,  $1{\le}k{\le}n$, we can choose $f$ in such a way that $f(u){>}1{-}u$ holds for all $u{\in}(0,1)$. If we define $h(u){=}u/(1{-}u)$, for $u{\in}(0,1)$ and $\nu{=}1$, then $h$ maps $(0,1)$ to $(1,{+}\infty)$. Each  $u_k$ ,  $1{\le}k{\le}n$, is clearly a fixed point and Condition~\eqref{e310} is satisfied since $h'(u_k){=}0$. It is therefore locally stable.

\providecommand{\bysame}{\leavevmode\hbox to3em{\hrulefill}\thinspace}
\providecommand{\MR}{\relax\ifhmode\unskip\space\fi MR }
\providecommand{\MRhref}[2]{%
  \href{http://www.ams.org/mathscinet-getitem?mr=#1}{#2}
}
\providecommand{\href}[2]{#2}

\end{document}